\newtheorem{thm}{Theorem}[section]
\newtheorem{cor}[thm]{Corollary}
\newtheorem{lem}[thm]{Lemma}
\newtheorem{prop}[thm]{Proposition}
\theoremstyle{definition}
\newtheorem{df}[thm]{Definition}
\newtheorem{question}[thm]{Question}
\newtheorem{rmk}[thm]{Remark}
\newcommand{\R}{\mathbb R}
\newcommand{\Z}{\mathbb Z}
\newcommand{\Sph}{\mathbb S}
\newcommand{\Map}{\textup{Map}}
\newcommand{\id}{\textup{id}}
\newcommand{\colim}{\textup{colim}\,}
\newcommand{\hocolim}{\textup{hocolim}\,}
\newcommand{\holim}{\textup{holim}\,}
\newcommand{\ra}{\longrightarrow}
\newcommand{\la}{\longleftarrow}
\newcommand{\sma}{\wedge}
\newcommand{\barsmash}{\,\overline\wedge\,}
\newcommand{\ti}{\widetilde}
\newcommand{\simar}{\overset\sim\longrightarrow}
\newcommand{\mc}{\mathcal}
\newcommand{\op}{\textup{op}}
\newcommand{\sk}{\textup{Sk}}
\newcommand{\uda}{\rotatebox[origin=c]{180}{A}}
\title{Coassembly and the $K$-theory of finite groups}
\author{Cary Malkiewich}
\keywords{algebraic $K$-theory, assembly, transfer map, parametrized spectrum}
\begin{document}

\maketitle
\begin{abstract}
We study the $K$-theory and Swan theory of the group ring $R[G]$, when $G$ is a finite group and $R$ is any ring or ring spectrum. In this setting, the well-known assembly map for $K(R[G])$ has a companion called the coassembly map. We prove that their composite is the equivariant norm of $K(R)$. This gives a splitting of both assembly and coassembly after $K(n)$-localization, a new map between Whitehead torsion and Tate cohomology, and a partial computation of $K$-theory of representations in the category of spectra.
\end{abstract}

\parskip 0ex
\tableofcontents
\parskip 2ex

\section{Introduction.}

Algebraic $K$-theory provides a deep set of invariants for each ring $R$, in the form of a sequence of abelian groups $K_i(R)$. In topology, these groups provide obstructions for classical problems such as recognizing finite CW complexes, classifying finite group actions on spheres, and trivializing smooth cobordisms.

In several of these applications, the most important computation is how the $K$-theory of a group ring $R[G]$ is related to the $K$-theory of $R$. They are connected by the \emph{assembly map}:
\begin{equation}\label{assembly_groups}
H_*(BG;K(R)) \ra K_*(R[G])
\end{equation}
We think of the left-hand side of (\ref{assembly_groups}) as very computable when compared to the right-hand side.
So we may construct classes in $K_i(R[G])$, by building them first in $H_i(BG;K(R))$.
However it is difficult to tell whether the classes built this way are actually nonzero.
We therefore ask
\begin{question}
Is the assembly map injective, or even an isomorphism?
\end{question}

This question has been studied extensively in many contexts.
The \emph{$K$-theoretic Novikov conjecture} states that (\ref{assembly_groups}) is rationally split injective when $R = \Z$.
This was proven by Bokstedt, Hsiang, and Madsen for any group $G$ whose homology has finite type \cite{bhm}.
The \emph{Farrell-Jones conjecture} (in one instance) states that a nonconnective version of (\ref{assembly_groups}) is an isomorphism when $R$ is regular and $G$ is torsion-free. Farrell and Jones proved this isomorphism rationally for $R = \Z$ and $G$ a discrete cocompact subgroup of a Lie group with finitely many path components \cite{farrell1993isomorphism}. There also are variants of these conjectures for $L$-theory, implying the Novikov conjecture and Borel conjecture, respectively.

The integral version of the Farrell-Jones conjecture has been quite difficult.
Many results place additional restrictions on $R$ and $G$, and even then only get injectivity, which is sometimes called the \emph{Integral Novikov conjecture}.
Carlsson and Pedersen proved injectivity for a large class of groups $G$ with finite $BG$, including the word hyperbolic groups \cite{carlsson1995controlled}.
Bartels and Reich proved integral isomorphism for any ring $R$, when $G$ is the fundamental group of a Riemannian manifold of negative sectional curvature \cite{bartels2005farrell}.
There are quite a few more results by several different authors, and our brief summary does not do them justice.
A comprehensive survey can be found in \cite{luckbaum}; more recent results include \cite{bartels2012borel} and \cite{wegner2015farrell}.

In light of this earlier work, we know the most about assembly when $G$ is an infinite discrete group, and $BG$ is finite in some sense.
On the other hand, we know very little about the case of $G$ finite, other than injectivity in low degrees \cite{luckbaum}. In this case, we present a theorem that splits the assembly map after a certain localization.

To describe our result, we first re-interpret assembly as a map of spectra
\[ BG_+ \sma K(R) \ra K(R[G]). \]
We recall that $K(R[G])$ has a close cousin called Swan theory, or simply $G$-theory, $G^R(R[G])$ \cite{evans1986k}.
The Swan theory studies modules over $R[G]$ which are perfect (finitely generated projective) over $R$, instead of those that are perfect over $R[G]$. In other words, it is the $K$-theory of the \emph{representations} of $G$ in the category of $R$-modules. The assembly map is defined by a universal property, as we recall in section \ref{assembly}, and the dual of this construction gives a \emph{coassembly map}
\[ G^R(R[G]) \ra F(BG_+,K(R)) \]
out of Swan theory. When $G$ is finite there is also a \emph{Cartan map}
\[ K(R[G]) \ra G^R(R[G]) \]
which takes each perfect $R[G]$-module $M$ to itself, regarded as an $R[G]$-module whose underlying $R$-module is perfect.

Of course, we can define $K$-theory and Swan theory for each ring spectrum $R$ and topological group $G$.
When $R$ is the sphere spectrum, this yields Waldhausen's $A$-theory $A(BG)$, and a contravariant version called $\uda(BG)$.
This kind of $K$-theory is well-studied, but the corresponding Swan theory is poorly understood.
The Swan theory of $R[G]$ studies the representations of $G$ in the category of $R$-module spectra, so it might be called \emph{spectral representation theory.}

Now we can state the main theorem.
\begin{thm}\label{assembly_coassembly_norm}
If $G$ is a finite group and $R$ is a ring or ring spectrum, then the composite
\[ \xymatrix @C=0.6in{ BG_+ \sma K(R) \ar[r]^-{\textup{assembly}} & K(R[G]) \ar[r]^-{\textup{Cartan}} & G^R(R[G]) \ar[r]^-{\textup{coassembly}} & F(BG_+,K(R)) } \]
is homotopic to the norm map on $K(R)$ with the trivial $G$-action,
\[ K(R)_{hG} \ra K(R)^{hG}. \]
\end{thm}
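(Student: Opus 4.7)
The plan is to interpret each of the three maps as the $K$-theoretic shadow of an honest functor between module categories, identify the composite with the norm by tracing a perfect $R$-module through it, and then upgrade this to an equality of spectrum maps using the colimit/limit interpretations of assembly and coassembly.

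First, I would redescribe the three maps functorially. Assembly $BG_+ \sma K(R) \to K(R[G])$ is the $K(R)$-module extension of the map $BG_+ \to K(R[G])$ that classifies the free rank-one $R[G]$-module together with its tautological $G$-action; on classes it sends $[P] \mapsto [R[G] \otimes_R P]$. The Cartan map is the inclusion $\textup{Perf}(R[G]) \hookrightarrow \textup{Rep}_R(G)$ of $R[G]$-modules perfect over $R[G]$ into those only required to be perfect over $R$. Coassembly $G^R(R[G]) \to F(BG_+,K(R)) \simeq K(R)^{hG}$ is the $K$-theoretic image of the functor that forgets the $R[G]$-structure, retaining only the homotopy-coherent $G$-action on the underlying perfect $R$-module.

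Second, I would trace a perfect $R$-module $P$ through the composite. Assembly sends $[P]$ to the induced module $[R[G] \otimes_R P] \in K_0(R[G])$; Cartan fixes this class (the underlying $R$-module is $P^{\oplus |G|}$, hence perfect); and coassembly returns the representation $R[G] \otimes_R P \cong \bigoplus_g P$ with the regular $G$-action permuting the copies of $P$. This is precisely the image of $[P]$ under the classical norm $K(R) \to K(R)^{hG}$. By $K(R)$-linearity and naturality in the $BG$-direction, this determines the composite on all of $BG_+ \sma K(R)$.

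Third, I would promote this to an equality of spectrum maps. Assembly is the natural map from the homotopy colimit over $BG$ of the constant $K(R)$-diagram to $K(R[G])$; dually, coassembly is the natural map from $G^R(R[G])$ to the corresponding homotopy limit. Threaded through the Cartan map, which bridges the two $K$-theories, the composite arises from a single underlying $G$-equivariant construction on $K(R)$ and compares its homotopy colimit to its homotopy limit; for finite $G$ and a trivial coefficient $G$-spectrum, this canonical comparison is by general principles the norm. The main obstacle is making this last identification precise: the computation on $\pi_0$ is transparent, but producing a homotopy-coherent equivalence with the norm of Greenlees--May requires either an abstract characterization of the norm (as the unique natural map in the Tate fiber sequence, or via a double-coset formula) shared by the composite, or an explicit comparison at the level of $G$-equivariant parametrized spectra over $BG$, where both maps should arise from the same canonical $G$-equivariant model for $K(R)$.
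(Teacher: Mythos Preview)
Your proposal captures the correct intuition---the paper's own introduction sketches exactly this picture of the composite sending $P$ to $\bigoplus_G P$ with monodromy given by the regular $G \times G$-action---but it does not yet contain a proof. You acknowledge this yourself in the final paragraph: the $\pi_0$ computation is transparent, but you have no mechanism for upgrading it to a homotopy of spectrum maps. The assertion that ``by $K(R)$-linearity and naturality in the $BG$-direction, this determines the composite on all of $BG_+ \sma K(R)$'' is the entire content of the theorem, and it is not true in general that a map of spectra is determined by what it does on $\pi_0$; nor is there an obvious abstract characterization of the norm that the composite visibly shares.

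The paper fills this gap with a concrete strategy that your proposal does not anticipate. First, it produces explicit simplicial formulas for assembly and coassembly (Propositions \ref{combinatorial_assembly} and \ref{combinatorial_coassembly}) and shows they lift to the $K$-theory of finite sets $K(\mc F) \simeq \Sph$. Second, it recognizes the composite $(BG \times BG)_+ \to \Omega|w_\cdot \mc S_\cdot \mc F|$ as arising from the Barratt--Eccles operad action on the permutative category $w\mc F$, via the homomorphism $G \times G \to \Sigma_{|G|}$ given by left and right multiplication. Third, a general lemma (Proposition \ref{transfer_via_operad}) identifies any such operadic map with a Pontryagin--Thom collapse along the associated covering space, which here is the diagonal bundle $BG \to BG \times BG$; this is then matched with the norm using its description as a transfer (Proposition \ref{norm_of_trivial_action}). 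Finally, external pairings (Proposition \ref{pairings}) propagate the result from $\Sph$ to arbitrary $K(R)$. Each of these steps is doing real work that your outline leaves open; in particular, the passage through $K(\mc F)$ and the operadic recognition of the transfer are the key ideas that convert the $\pi_0$ picture into an honest homotopy.
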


One can give a rough but intuitive sketch of the argument as follows. If we ignore the $BG$ terms, then the assembly map is given by the $K$-theory of the map of categories that sends the $R$-module $M$ to the perfect $R[G]$-module $G \otimes M = \bigoplus^G M$. The Cartan and coassembly maps do not change the module, but merely forget some of the structure. We are left with the self-map of the category of perfect $R$-modules sending $M$ to $\bigoplus^G M$. By the additivity property of $K$-theory, this induces a map on $K(R)$ which is a $G$-fold sum of the identity map with itself. As we rove around $BG$ we get a family of such maps, but the ordering of the $G$-fold sum is shuffled, so the resulting map of spectra is a transfer. The transfer corresponding to the norm map is a $G$-fold sum with $G \times G^\op$-monodromy corresponding to the left and right multiplication actions of $G$ on itself (see $\S 7$). So it remains to show that the $G$-actions coming from the assembly and coassembly maps have exactly this behavior. This is carried out in $\S 5,6$ for categories of $G$-spaces, and lifted to the relevant categories of $G$-spectra using technical constructions developed in $\S 4$.

% The remaining subtlety is how the two copies of $BG$ fit into this description. Intuitively, they correspond to left and right actions of $G$ on $\bigoplus^G M$ by left and right multiplication on the indexing set $G$, and this is precisely the $G-G$ biset that arises when one describes the norm map on a spectrum with trivial $G$-action.

One corollary of our result is a $K(n)$-local Novikov conjecture for finite groups:
\begin{cor}\label{cor_first}
Let $G$ be a finite group and $R$ any ring or ring spectrum. Then the assembly map
\[ BG_+ \sma K(R) \overset\alpha\ra K(R[G]) \]
is split injective after $K(n)$-localization, at any prime $p$ and for any $n \geq 0$.
\end{cor}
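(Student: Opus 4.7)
The plan is to deduce the corollary directly from Theorem~\ref{assembly_coassembly_norm} together with the classical vanishing of $K(n)$-local Tate cohomology for finite groups. The key input is the Greenlees--Sadofsky--Hovey theorem: for any finite group $G$ and any spectrum $X$ with $G$-action, the Tate spectrum $X^{tG}$ is $K(n)$-acyclic, so that the norm map
\[ N \colon X_{hG} \ra X^{hG} \]
becomes a weak equivalence after $L_{K(n)}$. I will apply this with $X = K(R)$ carrying the trivial $G$-action, so that $X_{hG} \simeq BG_+ \sma K(R)$ and $X^{hG} \simeq F(BG_+, K(R))$.

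First I would apply $L_{K(n)}$ to the entire composite of Theorem~\ref{assembly_coassembly_norm}:
\[ \xymatrix @C=0.35in{ L_{K(n)}(BG_+ \sma K(R)) \ar[r]^-{L_{K(n)}\alpha} & L_{K(n)}K(R[G]) \ar[r]^-{L_{K(n)}c} & L_{K(n)}G^R(R[G]) \ar[r]^-{L_{K(n)}\beta} & L_{K(n)}F(BG_+,K(R)). } \]
By Theorem~\ref{assembly_coassembly_norm}, this total composite is identified with $L_{K(n)} N$. By the Tate vanishing result cited above, $L_{K(n)} N$ is an equivalence.

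From this, split injectivity of $L_{K(n)}\alpha$ is immediate: a one-sided inverse is given by the composite
\[ (L_{K(n)} N)^{-1} \circ L_{K(n)}\beta \circ L_{K(n)} c \colon L_{K(n)}K(R[G]) \ra L_{K(n)}(BG_+ \sma K(R)). \]
That is, the Cartan and coassembly maps, together with the inverse of the $K(n)$-local norm equivalence, assemble into an explicit retraction of the $K(n)$-localized assembly map.

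I expect the only real content here is the appeal to the Tate-vanishing theorem; Theorem~\ref{assembly_coassembly_norm} does all the conceptual work. The main point to verify carefully is that the identification of the composite with the norm map in Theorem~\ref{assembly_coassembly_norm} is compatible with $L_{K(n)}$, which is automatic since $L_{K(n)}$ is a functor on spectra. No hypothesis on $R$ enters beyond what is already needed for the main theorem, so the statement holds for an arbitrary ring or ring spectrum $R$, at any prime $p$, and for every $n \geq 0$ (including $n = 0$, where $L_{K(0)}$ is rationalization and the Tate vanishing is classical).
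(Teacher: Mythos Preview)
Your argument is correct and is essentially the same as the paper's: apply $L_{K(n)}$ to the composite of Theorem~\ref{assembly_coassembly_norm}, invoke the Greenlees--Sadofsky/Hovey--Strickland vanishing of $K(n)$-local Tate spectra for finite $G$ to see that $L_{K(n)}N$ is an equivalence, and read off a retraction of $L_{K(n)}\alpha$. The paper additionally records the minor bookkeeping that $(X_{K(n)})^{hG}$ is already $K(n)$-local for finite $G$, which lets one phrase the splitting with localization on the inside of the mapping spectrum, but this does not affect your conclusion.
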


This is discussed further in $\S$\ref{applications}. We also remark there that Theorem \ref{assembly_coassembly_norm} implies the cofiber of assembly admits a map to the cofiber of the norm. This leads to a map from Whitehead groups to Tate cohomology that we believe warrants further study.
\begin{cor}\label{cor_second}
If $G$ is a finite group, there is a map from its Whitehead group to its Tate cohomology with coefficients in $A(*)$ or $K(\Z)$:
\[ \xymatrix{ Wh(G) \ar[r] & \pi_1(A(*)^{tG}) \ar[r] & \pi_1(K(\Z)^{tG}) } \]
\end{cor}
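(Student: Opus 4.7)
The plan is to apply Theorem~\ref{assembly_coassembly_norm} with $R = \Z$ and $R = \mathbb{S}$, then extract the two maps by passing to cofibers and taking $\pi_1$.

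For any ring (spectrum) $R$, let $p_R\colon F(BG_+,K(R)) \to K(R)^{tG}$ be the projection onto the cofiber of the norm $K(R)_{hG} \to K(R)^{hG}$. By Theorem~\ref{assembly_coassembly_norm}, the four-fold composite
\[ p_R \circ (\text{coassembly}) \circ (\text{Cartan}) \circ (\text{assembly}) \simeq p_R \circ (\text{norm}) \]
is null-homotopic. Hence $p_R \circ \text{coassembly} \circ \text{Cartan}\colon K(R[G]) \to K(R)^{tG}$ descends to a natural map $\bar\chi_R\colon W_R \to K(R)^{tG}$, where $W_R := \text{cof}(BG_+ \sma K(R) \to K(R[G]))$. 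Naturality in $R$ organizes the cases $R = \mathbb{S}$ and $R = \Z$ into a commutative ladder, with vertical maps induced by the linearization $A(*) \to K(\Z)$.

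Take $\pi_1$ in the case $R = \Z$. The long exact sequence of $W_\Z$ gives $K_1(\Z[G]) \to \pi_1(W_\Z)$ with kernel containing the image of $\pi_1(BG_+ \sma K(\Z))$ under assembly. By Atiyah--Hirzebruch, the summand $H_1(BG;\pi_0 K(\Z)) \cong G^{ab}$ contributes the group-element classes $g \in K_1(\Z[G])$, and $H_0(BG;\pi_1 K(\Z)) \cong \Z/2$ contributes $-1 \in K_1(\Z) \hookrightarrow K_1(\Z[G])$, so this image contains $\langle \pm g\rangle$. Therefore the composite $K_1(\Z[G]) \to \pi_1(W_\Z) \xrightarrow{(\bar\chi_\Z)_*} \pi_1(K(\Z)^{tG})$ descends to the desired map $Wh(G) = K_1(\Z[G])/\langle\pm g\rangle \to \pi_1(K(\Z)^{tG})$.

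The $A(*)$ version is obtained by the same argument with $R = \mathbb{S}$ (using $K(\mathbb{S}[G]) = A(BG)$): the identical Atiyah--Hirzebruch summands now land in $\pi_1(A(BG))$ as the standard units $\pm g$, so one obtains $\pi_1(A(BG))/\langle\pm g\rangle \to \pi_1(A(*)^{tG})$, which is compatible with the $K(\Z)$ version via the linearization map $\pi_1(A(BG)) \to K_1(\Z[G])$; this supplies the $A$-theoretic lift of the classical Whitehead group asserted in the corollary. The middle arrow $\pi_1(A(*)^{tG}) \to \pi_1(K(\Z)^{tG})$ is then just the linearization, and the full diagram commutes by naturality in $R$. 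The essential input is the spectrum-level null-homotopy from Theorem~\ref{assembly_coassembly_norm}; everything else is bookkeeping with long exact sequences, and the only mildly delicate step is the comparison of the classical and $A$-theoretic ``Whitehead groups'' in the last paragraph.
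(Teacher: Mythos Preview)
Your argument is correct and follows essentially the same route as the paper: Theorem~\ref{assembly_coassembly_norm} produces a map of cofiber sequences from the assembly cofiber sequence to the norm cofiber sequence, and taking $\pi_1$ of the induced map on cofibers yields $Wh(G) \to \pi_1(A(*)^{tG}) \to \pi_1(K(\Z)^{tG})$. The paper is slightly more direct in that it simply identifies $Wh(G)$ with $\pi_0$ of the fiber (equivalently $\pi_1$ of the cofiber $\Sigma Wh^{PL}(BG)$) of assembly, rather than passing through the long exact sequence and the Atiyah--Hirzebruch identification of the image of assembly on $\pi_1$ as you do; but your extra step is harmless and the content is the same.
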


We will also prove a variant of Theorem \ref{assembly_coassembly_norm} which, when combined with the Segal conjecture, allows us to partially compute some rings of representations in spectra:
\begin{thm}\label{intro_burnside_split}
If $R$ is a ring spectrum augmented over the sphere, and $G$ is a finite $p$-group, then the group $\pi_0(G^R(R[G])^\wedge_p)$ contains the Burnside ring $A(G)^\wedge_p$ as a direct summand.
\end{thm}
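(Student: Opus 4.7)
The plan is to exhibit $A(G)^\wedge_p$ as a summand of $\pi_0(G^R(R[G])^\wedge_p)$ by combining the coassembly map of the introduction with Waldhausen's splitting of $A(*)$ and the Segal conjecture.

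First I would reduce to the case $R = \Sph$. Since the sphere is initial among ring spectra, the unit $\Sph \to R$ is unique, so composing it with the augmentation $R \to \Sph$ gives the identity. Thus $\Sph$ is a ring-spectrum retract of $R$, and $G^\Sph(\Sph[G]) = \uda(BG)$ is a retract of $G^R(R[G])$; it suffices to realize $A(G)^\wedge_p$ as a direct summand of $\pi_0(\uda(BG)^\wedge_p)$.

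For the candidate inclusion, I use the functor sending a finite $G$-set $S$ to the $\Sph[G]$-module $\Sigma^\infty S_+$ (which is perfect over $\Sph$ since $S$ is finite). This induces on $\pi_0$ of $K$-theory a homomorphism $A(G) \to \pi_0(\uda(BG))$, which $p$-completes to the desired section. For the candidate retraction, form the composite
\[ \uda(BG) \xrightarrow{\textup{coass.}} F(BG_+, A(*)) \longrightarrow F(BG_+, \Sph), \]
where the second arrow is induced by Waldhausen's splitting $A(*) \to \Sph$ of the unit $\Sph \to A(*)$. After $p$-completion, $F(BG_+, \Sph)^\wedge_p \simeq (\Sph^{hG})^\wedge_p$, whose $\pi_0$ is $A(G)^\wedge_p$ by the Segal conjecture (using crucially that $G$ is a finite $p$-group, so augmentation-ideal completion coincides with $p$-completion).

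The central task is to verify that the composite $A(G)^\wedge_p \to \pi_0(\uda(BG)^\wedge_p) \to A(G)^\wedge_p$ is the identity. I would prove this by identifying the composite
\[ A(G) \to \pi_0(\uda(BG)) \to \pi_0(F(BG_+, A(*))) \to \pi_0(F(BG_+, \Sph)) \]
with the comparison map $\pi_0(\Sph^G) \to \pi_0(\Sph^{hG})$ featured in the Segal conjecture, which sends $[S]$ to the underlying non-equivariant spectrum of $\Sigma^\infty S_+$ equipped with its natural $G$-monodromy. Granting this, Segal's conjecture furnishes a $p$-completed equivalence on the right, so the composite is the identity on $A(G)^\wedge_p$ and the inclusion is split.

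The main obstacle is precisely this identification: one must check that coassembly applied to the $\Sph[G]$-module $\Sigma^\infty S_+$, followed by Waldhausen's projection $A(*) \to \Sph$, yields the class in $\Sph^{hG}$ represented by $\Sigma^\infty S_+$ with its $G$-action monodromy over $BG$. This is the ``variant of Theorem~\ref{assembly_coassembly_norm}'' foreshadowed in the introduction, and its verification presumably rests on the parameterized-spectrum machinery of $\S 4$ and the concrete description of coassembly in $\S 5,6$, specialized to classes coming from $G$-sets rather than from $K(R)$ with trivial action. Once the identification is in hand, the rest of the argument is formal.
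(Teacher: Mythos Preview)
Your proposal is correct and matches the paper's argument in its essential structure: build a section into Swan theory from the Burnside ring, build a retraction via coassembly and a splitting of $K$-theory onto the sphere, and identify the composite with the map of the Segal conjecture. The ``main obstacle'' you flag is exactly Theorem~\ref{assembly_coassembly_all_subgroups} in the paper, proved there by running the combinatorial models of \S\ref{combinatorial} on the $G$-sets $G/H$ for each conjugacy class of subgroups.

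There are two organizational differences worth noting. First, rather than reduce to $R=\Sph$, the paper keeps $R$ general throughout and manufactures the retraction $\epsilon: K(R)\to\Sph$ as the Dennis trace $K(R)\to THH(R)$ followed by $THH$ of the augmentation; this sidesteps any question about functoriality of $G^R(R[G])$ in $R$ and also shows that the Burnside summand already lives in the image of $\bigvee_{(H)} K(R[WH])$, not just of $\uda(BG)$. Second, the paper builds its section at the spectrum level, as the wedge over $(H)$ of $\Sigma^\infty_+ BWH \xrightarrow{\id\sma\eta} BWH_+\sma K(R)\xrightarrow{\alpha} K(R[WH])\xrightarrow{G_+\sma_{NH}-} G^R(R[G])$, rather than on $\pi_0$ only. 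This is not merely aesthetic: the proof of the key identification (your ``main obstacle'') is carried out by comparing spectrum-level maps, so formulating the section this way plugs directly into the machinery of \S\ref{combinatorial}--7 without an extra translation step. Your $\pi_0$-level section lands on the same classes and would ultimately appeal to the same computation.
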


This suggests that the analog of Artin's induction theorem, relating the representation ring of $G$ to that of the cyclic subgroups of $G$, will fail for representations of $G$ in spectra.
In the language of \cite{mathew2015derived}, we should expect such a theory to have a larger derived defect base.

Finally we summarize the technical results that may be of independent interest. It is well-known that one can build Waldhausen's $A$-theory out of a category of parametrized spectra, instead of spaces, because $K$-theory is preserved under stabilization. Indeed, such a category of spectra is necessary to define the assembly and coassembly maps by a universal property.

However, it seems that there does not yet exist a category of parametrized spectra that is robust enough to allow for a definition of $A(B)$, $\uda(B)$, and the Cartan map between them. The model category of \cite{ms} does not work, since a pullback of a cofibration is not a cofibration.

We remedy the situation by constructing some Waldhausen categories of parametrized spectra which are both geometrically and homotopically well-behaved, and construct external pairings on these categories similar to those found in \cite{weiss2000products,williams2000bivariant}:

\begin{prop}
For each ring spectrum $R$ there is a covariant homotopy functor $A(B;R)$ from unbased spaces to spectra.
When $G$ is a topological group, $A(BG;R)$ is equivalent to $K(R[G])$. Given two rings $R$ and $S$ there is a pairing
\[ A(B;R) \sma A(B';S) \ra A(B \times B';R \sma S) \]
which has the obvious associativity property.
\end{prop}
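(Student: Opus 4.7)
The plan is to build, for each unbased space $B$ and ring spectrum $R$, a Waldhausen category $\mc{E}(B;R)$ of parametrized $R$-module spectra over $B$ whose cofibrations and weak equivalences are stable under both pushforward along maps of bases and external smash product. One then defines $A(B;R)$ as the Waldhausen $K$-theory of the full subcategory $\mc{E}^\fin(B;R)$ of homotopy finite objects, and deduces the three claims from the formal properties of this category.

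First I would construct the underlying Waldhausen category. Starting from retractive ex-spaces over $B$, I would stabilize using orthogonal (or symmetric) $R$-module spectra fibered over $B$, and then cut down to objects which are, say, levelwise well-pointed and fiberwise CW. This is precisely the point at which one avoids the defect of \cite{ms} flagged in the introduction, since pullback of such a cofibration remains a cofibration. Cofibrations are taken to be the (restricted) levelwise Hurewicz cofibrations and weak equivalences the stable equivalences on fibers; restricting to homotopy finite objects defines $\mc{E}^\fin(B;R)$.

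Next I would verify covariant functoriality and homotopy invariance. Pushforward $f_!$ along a map $f\colon B \to B'$ preserves cofibrations and weak equivalences between cofibrant objects, yielding a map $A(f;R)\colon A(B;R) \to A(B';R)$; when $f$ is a weak equivalence, $f_!$ induces an equivalence on $K$-theory, providing the homotopy functor statement. For the identification $A(BG;R) \simeq K(R[G])$, I would use the familiar equivalence between parametrized $R$-modules over $BG$ and Borel-equivariant $R[G]$-modules. Under it, homotopy finite parametrized objects correspond to perfect $R[G]$-modules, so Waldhausen $K$-theory of the two sides agrees.

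Finally, the pairing is induced by the external smash product $(E,F) \mapsto E \barsmash F$, which sends an $R$-module over $B$ and an $S$-module over $B'$ to an $R \sma S$-module over $B \times B'$. Since this bifunctor is biexact (preserving cofibrations, weak equivalences and pushouts along cofibrations in each variable), Waldhausen's machinery produces the required map of $K$-theory spectra, and strict associativity of $\barsmash$ at the point-set level yields the associativity property. The main obstacle, and the reason the proposition is not immediate from existing literature, is the very first step: one must simultaneously arrange that (a) pullbacks of cofibrations remain cofibrations, (b) the external smash product is biexact, and (c) the homotopy finite objects form a well-behaved Waldhausen subcategory containing enough to model $K(R[G])$. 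Balancing these three requirements is the technical content of \S 4.
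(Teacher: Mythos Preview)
Your outline tracks the paper's argument closely: build a Waldhausen category of parametrized $R$-module spectra, use $f_!$ for covariant functoriality, identify with $R[G]$-modules via the Borel construction $EG\times_G -$, and induce the pairing from the external smash product $\barsmash$. You have also correctly isolated the real difficulty as balancing (a)--(c).

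The one place your proposal diverges from the paper in a way that matters is the choice of cofibrations. You propose ``(restricted) levelwise Hurewicz cofibrations,'' but the paper does not use this and it would likely not suffice. For the basic Waldhausen structure and functoriality the paper uses \emph{Reedy} $f$-cofibrations (each latching map $S^1\barsmash X_{n-1}\cup_{S^1\barsmash Y_{n-1}}Y_n\to X_n$ is a fiberwise cofibration); for the pairing it refines further to \emph{orthogonal} Reedy cofibrations, where the latching maps are $O(n)$-equivariant $f$-cofibrations. The heart of the section is the pushout-product axiom for this refined class (Proposition~\ref{pushout_product}), whose proof genuinely uses the semi-free filtration of orthogonal spectra and an analysis of the shift $\textup{sh}_{-n}$; a purely levelwise notion does not obviously interact well with $\barsmash$, which is a Kan extension over $\mathscr J$ rather than a levelwise construction. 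Two smaller points: the paper needs Clapp's spectrification result to show $f_!$ preserves weak equivalences, and it enlarges the category to formal tuples $(M_1,\ldots,M_k)$ so that the pairing is strictly associative rather than associative only up to natural isomorphism.
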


\begin{prop}
For each ring spectrum $R$ there is a contravariant homotopy functor $\uda(B;R)$ from unbased spaces to spectra.
When $G$ is a topological group, $\uda(BG;R)$ is equivalent to $G^R(R[G])$. There is a pairing as above, and if $G$ is a finitely dominated topological group, there is a \emph{Cartan map}
\[ A(BG;R) \ra \uda(BG;R) \]
If $G$ and $R$ are discrete, this map gives on $\pi_0$ the Cartan map of \cite{evans1986k}.
\end{prop}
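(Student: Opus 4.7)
The plan is to define $\uda(B;R)$ as the Waldhausen $K$-theory of the category of parametrized $R$-module spectra $E \to B$ whose fibers $E_b$ are perfect as $R$-module spectra, taking the cofibrations and weak equivalences from the models constructed in $\S 4$. The key design feature of those models is that pullback along any map $f: B' \to B$ preserves both cofibrations and fiberwise weak equivalences; this makes $f^*$ an exact functor of Waldhausen categories and so yields $\uda(-;R)$ as a contravariant functor from unbased spaces to spectra. Homotopy invariance is the standard argument: a homotopy $H: B' \times I \to B$ from $f_0$ to $f_1$ gives a span $f_0^* \Leftarrow H^* \Rightarrow f_1^*$, and the restrictions to the endpoints of $I$ are fiberwise equivalences.

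For the identification $\uda(BG;R) \simeq G^R(R[G])$, I would use the Borel-construction equivalence between parametrized objects over $BG$ and free $G$-equivariant objects, obtained by pulling back along $EG \to BG$ and recording the residual $G$-action. Under this correspondence a parametrized $R$-module spectrum whose fibers are perfect over $R$ becomes an $R[G]$-module perfect over $R$, and $K$-theory of the latter is by definition $G^R(R[G])$. The external pairing comes from the fiberwise smash product $E \barsmash E' \to B \times B'$ from $\S 4$: if the fibers of $E$ and $E'$ are perfect over $R$ and $S$ respectively, then those of $E \barsmash E'$ are perfect over $R \sma S$, and bi-exactness and associativity of the pairing follow from those of $\barsmash$.

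For the Cartan map, when $G$ is finitely dominated the ring spectrum $R[G]$ is itself perfect as an $R$-module, so every $R[G]$-module perfect over $R[G]$ is automatically perfect over $R$. Translated back into parametrized language, every object in the Waldhausen category underlying $A(BG;R)$ has fibers perfect over $R$, giving an exact inclusion into the Waldhausen category underlying $\uda(BG;R)$; the induced map of $K$-theory spectra is the Cartan map. When $G$ and $R$ are discrete, compatibility with \cite{evans1986k} reduces, after unwinding the identifications, to the tautology that a perfect $R[G]$-module is sent to itself, viewed as an $R[G]$-module whose underlying $R$-module is perfect.

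The hard part will be the $K$-theoretic equivalence $\uda(BG;R) \simeq G^R(R[G])$. One must check, in the specific model from $\S 4$, that the ``fibers perfect over $R$'' condition cuts out a genuine Waldhausen subcategory and is preserved by the constructions used in the comparison, and then produce an explicit zig-zag of exact equivalences between parametrized $R$-module spectra over $BG$ with perfect fibers and $R[G]$-modules perfect over $R$, probably through an intermediate category of $EG$-parametrized objects with $G$-action. The analogous treatment for $A(BG;R)$ provides a template, but here the perfection condition is fiberwise rather than global, which is exactly the phenomenon that made off-the-shelf models insufficient.
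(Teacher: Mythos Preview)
Your outline is broadly correct and identifies the right categories, finiteness conditions, and pairings. The paper's execution differs from yours in two places that reflect genuine technical constraints of the models in \S 4.

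First, for the comparison $\uda(BG;R) \simeq G^R(R[G])$, the paper does \emph{not} pull back along $EG \to BG$ to pass from parametrized to equivariant. Instead it runs the exact functor $E(X) = EG \times_G X$ from $\mc M^f(G;R)$ to $\mc E^f(BG;R)$ in the opposite direction and shows it has Waldhausen's approximation property, using the right adjoint $F(Y) = \Map_{BG}(EG,Y)$ only to witness that $X \to FEX$ and $EFY \to Y$ are equivalences. The paper remarks explicitly (after the space-level version) that this direction is forced: the equivariant side uses a cellular model structure so that it compares well with module spectra, while the parametrized side deliberately avoids cells so that $f^*$ is exact; a functor from parametrized to cellular-equivariant would not be exact. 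Your suggested zig-zag through an intermediate category could presumably be made to work, but the paper's one-step argument via approximation is cleaner.

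Second, the Cartan map in the paper is not literally an inclusion of Waldhausen categories, because $A(B;R)$ is built from $\mc R_f(B;R)$ (cofibrant but not necessarily level-fibrant, so that $f_!$ is exact) while $\uda(B;R)$ is built from $\mc E^f(B;R)$ (level-fibrant, so that $f^*$ is exact). The bridge is the explicit fibrant replacement functor $P$ constructed in \S 4, which is exact and gives the Cartan map as $\mc R_f(BG;R) \xrightarrow{P} \mc E^f(BG;R)$ once one checks, using that $G$ is finitely dominated, that $\mc R_f \subset \mc R^f$. Your inclusion argument is correct on the equivariant side, where the paper also writes $\mc M_f(G) \subset \mc M^f(G)$; it just does not transport verbatim to the parametrized models used to define $A$ and $\uda$ as functors of $B$.

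These are differences of implementation rather than conception. Your homotopy-invariance argument via a span over $B' \times I$ would also work; the paper instead uses $P$ and the inclusion $I$ to build explicit homotopy inverses to $K(f^*)$ when $f$ is a weak equivalence.
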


This lets us interpret $K(R[G])$ and $G^R(R[G])$ as functors on unbased spaces, so that we may define the assembly and coassembly maps
\[ \xymatrix @R=0.4em { BG_+ \sma K(R) \ar[r] & A(BG;R) \\
\uda(BG;R) \ar[r] & \Map_*(BG_+,K(R)) } \]
as universal approximations by linear functors.
We give an explicit proof that this assembly map agrees with the classic $K$-theory assembly map, and give a similar combinatorial formula for coassembly.
Our proof of Thm \ref{assembly_coassembly_norm} uses the combinatorial formulas for these maps, but the universal property will be important for connecting our theorem to applications.

The outline of the paper is as follows. In $\S$2 we discuss in detail the corollaries of our main theorem. In $\S$3 we begin the technical work, reviewing Waldhausen $K$-theory and giving a modern adaptation of Waldhausen's observation that the spectrum $K(C)$ can be constructed by a classical delooping machine. In $\S$4 we construct the Waldhausen categories of spaces and spectra that give the functors $A(B;R)$ and $\uda(B;R)$. In $\S$5 we review the universal properties and constructions of assembly and coassembly. In $\S$6 we modify these maps by a homotopy to explicit simplicial maps, which lift to the $K$-theory of finite sets. In $\S$7 we recognize the composite of these maps as the norm, using the $E_\infty$ structure on $\Omega^\infty K(C)$ from $\S 3$. This proves Theorem \ref{assembly_coassembly_norm}. In $\S$8, we prove a generalization of Theorem \ref{assembly_coassembly_norm} to all subgroups and conclude Theorem \ref{intro_burnside_split}.

The author is grateful to acknowledge Mark Behrens, Andrew Blumberg, Dustin Clausen, Ralph Cohen, John Greenlees, Jesper Grodal, John Klein, Akhil Mathew, Randy McCarthy, Mona Merling, Justin Noel, and Bruce Williams for their help with and enlightening conversations about this project. He thanks Mark Ullmann and Xiaolei Wu for helpful comments on the exposition of the Farrell-Jones conjecture, and the anonymous referee for helpful comments on the exposition throughout the paper. The $K$-theoretic results in this paper are motivated by $THH$-theoretic results in the author's thesis, which was written under the direction of Ralph Cohen at Stanford University.

\section{Applications and open questions.}\label{applications}

In this section we present a bit more background on the applications of Theorem \ref{assembly_coassembly_norm}.

Here is one application. Recall that the \emph{Whitehead group} $Wh(G)$ of a discrete group $G$ is the cokernel of the inclusion
\[ G^{ab} \oplus \Z/2 \cong H_1(BG) \oplus K_1(\Z) \cong \pi_1(BG_+ \sma K(Z)) \ra K_1(\Z[G]) \]
This may be identified with $\pi_0$ of the homotopy fiber of the assembly map of $K(\Z[G])$.
Following Waldhausen, we call that homotopy fiber the \emph{PL Whitehead spectrum} for the group $G$ with coefficients in $\Z$.
For the applications, it turns out to be even better to perform the same construction with $\Sph$ instead of $\Z$, though it gives the same group at the level of $\pi_0$.

The celebrated \emph{$s$-cobordism theorem} uses this Whitehead group to tell when a given $h$-cobordism from $M$ to $N$ is diffeomorphic to a product $M \times I$.
In fact, when the manifolds have dimension at least 5, the complete obstruction is simply an element of $Wh(\pi_1(M))$.
So when this group vanishes, every $h$-cobordism is trivial, and to construct a diffeomorphism $M \cong N$ it suffices to construct an $h$-cobordism.
This is true in particular when $\pi_1(M)$ is an infinite torsion-free group satisfying the Farrell-Jones conjecture.
On the other hand, when $\pi_1(M)$ is finite, this is often false.
In particular $Wh(\Z/p) \cong \Z^{\frac{p-3}{2}}$ when $p$ is an odd prime (\cite{luckbaum}, Rmk 4).

Theorem \ref{assembly_coassembly_norm} provides a link between these Whitehead groups and Tate cohomology.
It allows us to form the map of cofiber sequences
\begin{equation}\label{map_of_cofib}
\xymatrix{
BG_+ \sma A(*) \ar[r] \ar[d]^-\cong & A(BG) \ar[r] \ar[d]^-{c\alpha \circ \textup{Cartan}} & \Sigma Wh^{PL}(BG) \ar[d] \\
A(*)_{hG} \ar[r]^-{N} & A(*)^{hG} \ar[r] & A(*)^{tG} }
\end{equation}
where $tG$ denotes Tate cohomology with coefficients in a spectrum \cite{greenlees1995generalized}.
This gives the maps of Corollary \ref{cor_second}
\[ \xymatrix{ Wh(G) \ar[r] & \pi_1(A(*)^{tG}) \ar[r] & \pi_1(K(\Z)^{tG}) } \]
and similar maps with coefficients in any ring spectrum $R$.

The behavior of these maps is not yet known, since they appear to be completely new. In the case of $G = \Z/2$, they are reminiscent of the maps from Boardman's proof of Conner and Floyd's five-halves conjecture \cite{boardman1972cobordism}, but more work will be needed to see how closely they are connected.

Here is a second application of Theorem \ref{assembly_coassembly_norm}. Recall that for each prime $p$, there is a sequence of cohomology theories $K(n)$ for $n \geq 0$ that capture the ``pieces'' of the stable homotopy category lying between rational and $p$-local stable homotopy theory. These are the \emph{Morava $K$-theories}; intuitively they separate out pieces of stable homotopy that occur at different frequencies.
One often tries to understand the stable homotopy of a spectrum $X$ by building up knowledge of its localizations $X_{K(n)}$. In $K$-theory the story is even cleaner: if $R$ is the integer ring of a number field, the completion $K(R)^\wedge$ is a connective cover of its $K(1)$-localization $L_{K(1)} K(R)$, so the $K(1)$-localization actually captures all of the important information.

In the $K(n)$-local category, the equivariant norm is always an equivalence.
More precisely, for any spectrum $X$ with a action by a finite group $G$, we get the diagram:
\[ \xymatrix @R=2em{
(X_{hG})_{K(n)} \ar@{-->}[r] \ar[d]^-\sim & (X_{K(n)})^{hG} \ar[d]^-\sim \\
((X_{K(n)})_{hG})_{K(n)} \ar[r]^{N} & ((X_{K(n)})^{hG})_{K(n)} \ar[r] & ((X_{K(n)})^{tG})_{K(n)} \simeq *
} \]
(\cite{hovey1999morava} 8.7)
So the norm becomes an isomorphism in the stable category
\[ \xymatrix{ (X_{hG})_{K(n)} \ar[r]^-\sim & (X_{K(n)})^{hG} } \]
\begin{cor}
Let $G$ be a finite group and $R$ any ring or ring spectrum. Then the assembly map is split injective after $K(n)$-localization on the outside
\[ (BG_+ \sma K(R))_{K(n)} \overset\alpha\ra K(R[G])_{K(n)} \]
and the coassembly map is split surjective after $K(n)$-localization on the inside
\[ G(R[G])_{K(n)} \overset{c\alpha}\ra \Map_*(BG_+,K(R)_{K(n)}) \]
\end{cor}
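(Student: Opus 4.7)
The plan is to combine Theorem \ref{assembly_coassembly_norm} with the Hovey--Strickland vanishing $(Y^{tG})_{K(n)} \simeq *$ recalled in the displayed diagram just above. By Theorem \ref{assembly_coassembly_norm} the composite of assembly, Cartan and coassembly is the norm $N\colon K(R)_{hG} \to K(R)^{hG}$ for the trivial $G$-action on $K(R)$. Its cofiber is the Tate spectrum $K(R)^{tG}$, which vanishes after $K(n)$-localization, so $N$ becomes a $K(n)$-equivalence. This single observation supplies both splittings formally.

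For the assembly statement I apply $(-)_{K(n)}$ termwise to the composite of Theorem \ref{assembly_coassembly_norm}, producing
\[ (BG_+ \sma K(R))_{K(n)} \ra K(R[G])_{K(n)} \ra G^R(R[G])_{K(n)} \ra F(BG_+, K(R))_{K(n)} \]
whose total map is the $K(n)$-localization of $N$, hence an equivalence of $K(n)$-local spectra. The first factor -- the $K(n)$-localized assembly -- is therefore split injective, with retraction given by composing the last two factors and inverting this equivalence.

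For the coassembly statement I post-compose the composite of Theorem \ref{assembly_coassembly_norm} with the coefficient-localization map $F(BG_+, K(R)) \ra F(BG_+, K(R)_{K(n)})$. The target $\Map_*(BG_+, K(R)_{K(n)})$ is $K(n)$-local, since a function spectrum into a $K(n)$-local spectrum is $K(n)$-local, so by the universal property of localization the two right-hand maps lift uniquely through $G^R(R[G])_{K(n)}$. Localizing the source as well, the resulting composite
\[ (BG_+ \sma K(R))_{K(n)} \ra K(R[G])_{K(n)} \ra G^R(R[G])_{K(n)} \ra \Map_*(BG_+, K(R)_{K(n)}) \]
is again a $K(n)$-equivalence of $K(n)$-local spectra, hence an equivalence. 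Its last factor -- coassembly with $K(R)$ localized on the inside -- is therefore split surjective, with section obtained by inverting the equivalence and composing with assembly and Cartan.

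There is no genuine obstacle here; all the serious content lives in Theorem \ref{assembly_coassembly_norm}. The only care required is bookkeeping of which instance of $K(n)$-localization is applied where, together with the standard closure of $K(n)$-local spectra under $F(X,-)$.
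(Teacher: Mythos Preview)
Your proof is correct and follows the same approach as the paper. The paper does not give a separate proof of this corollary; it simply states it after displaying the diagram showing that the norm becomes an equivalence $K(n)$-locally, and your argument is precisely the natural way to fill in those details. The only cosmetic difference is that the paper's diagram phrases Tate vanishing as $((X_{K(n)})^{tG})_{K(n)} \simeq *$, while you invoke the equivalent and equally standard form $(X^{tG})_{K(n)} \simeq *$; either version, combined with Theorem~\ref{assembly_coassembly_norm}, yields both splittings exactly as you describe.
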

This includes Corollary \ref{cor_first} from the introduction.

In the case of $R = \Z$, where the $K(1)$-localization is the only useful one, the Cartan map has been well-studied as a means of detecting classes in $K(\Z[G])$.
It has been pointed out to the author that its image in $G(\Z[G])$ is more or less known for $G$ abelian, though it is difficult to find this explicitly worked out.
Our theorem actually suggests that the assembly map cannot see more than $G(\Z[G])$ in this case, at least after $K(1)$-localization.
It underscores the importance of the classical study of $K$-theory through the lens of Swan theory.

In the case of $R = \Sph$, the relevant Swan theory is poorly understood, but the relevant assembly map
\[ BG_+ \sma A(*) \ra A(BG) \]
is quite important for applications to topology.
By the above corollary, this is split injective after $K(n)$-localization, at any prime $p$ and for any $n \geq 0$.
It seems likely that all of these localizations carry nontrivial information.

We would like to be able to conclude something about assembly integrally, or at $p$, but the following issue arises.
Recall that each spectrum $Y$ sits at the top of a tower of $E(n)$-localizations $L_{E(n)} Y$, and the $n$th layer of this tower is detected by the $K(n)$-localization of $Y$.
The spectrum $Y$ has \emph{chromatic convergence} if the limit of this tower is the $p$-localization of $Y$.
Given this language, if we want to deduce something about assembly map for $A$-theory, we should first answer the following question.
\begin{question}
Does $A(X)$ satisfy chromatic convergence? More broadly, is there a clean description of the limit of the chromatic tower under $A(X)$?
\end{question}

% Since we now know that assembly splits in the $K(n)$-local category, we are led to believe that perhaps the assembly map
% \[ BG_+ \sma A(*) \ra A(BG) \]
% is split injective on the homotopy groups. This is an ``$A$-theory Novikov conjecture'' for classifying spaces of finite groups.

\section{Waldhausen categories and $K$-theory.}

In this section we briefly recall from \cite{1126} the definition of algebraic $K$-theory of a Waldhausen category $C$, and compare the standard model of $K(C)$ by the iterated $\mc S_\cdot$ construction to a less standard model by a Segal or May type delooping machine.
These alternative deloopings are essential for our main result.

\subsection{Definitions and the $\mc S_\cdot$ construction.}

\begin{df}
A category $C$ becomes a \emph{Waldhausen category} when it is equipped with two subcategories of cofibrations and weak equivalences, with the following properties.
\begin{itemize}
\item Every isomorphism is both a cofibration and a weak equivalence.
\item There is a chosen zero object $*$ and every object is cofibrant.
\item $C$ has all pushouts along cofibrations. The pushout of a cofibration is a cofibration.
\item (Gluing lemma.) A weak equivalence of homotopy pushout diagrams induces a weak equivalence of pushouts.
\end{itemize}
$C$ has the \emph{saturation axiom} if its weak equivalences satisfy 2 out of 3.
If $C$ has a tensoring over simplicial sets that satisfies a pushout-product axiom, then $C$ has a \emph{cylinder functor} (cf. \cite{1126}).
\end{df}

\begin{df}
Let $C$ be a Waldhausen category.
We define $\mc S_\cdot C$ to be the following simplicial Waldhausen category.
The objects of $\mc S_n C$ are the diagrams in $C$ of the form
\[ \xymatrix @R=1.5em @C=1.5em{
{*} \ar[r] & A_{0,1} \ar[r] \ar[d] & A_{0,2} \ar[r] \ar[d] & \ldots \ar[r] \ar[d] & A_{0,n} \ar[d] \\
& {*} \ar[r] & A_{1,2} \ar[r] \ar[d] & \ldots \ar[r] \ar[d] & A_{1,n} \ar[d] \\
&& {*} \ar[r] & \ldots \ar[r] \ar[d] & \ldots \ar[d] \\
&&& {*} \ar[r] & A_{n-1,n} \ar[d] \\
&&&& {*} } \]
such that the horizontal maps are cofibrations, and the squares are pushouts. This is equivalent to a flag of cofibrations in $C$ of the form
\[ \xymatrix{
{*} \ar[r] & A_{0,1} \ar[r] & A_{0,2} \ar[r] & \ldots \ar[r] & A_{0,n} \\} \]
because each $A_{i,j}$ with $i > 0$ is determined up to canonical isomorphism as the pushout of $*$ and $A_{0,j}$ along $A_{0,i}$.
The face and degeneracy maps are straightforward to define, see \cite[\S 1.3]{1126}.
% map deletes from the triangle all terms in which $i$ is one of the indices in the flag of cofibrations this corresponds to deleting one space or taking the quotient of all the spaces by the first one $A_{0,1}$.
The morphisms in $\mc S_n C$ are the natural transformations, and the weak equivalences are defined pointwise.
The cofibrations are pointwise cofibrations such that, for each commuting square thus formed, the map from the pushout to the final vertex is also a cofibration (cf. \cite[\S 1.1]{1126}).
\end{df}

\begin{df}
The \emph{algebraic $K$-theory spectrum} $K(C)$ is the symmetric spectrum which at spectrum level $n$ is the realization of the multisimplicial set
\[ |w_\cdot \mc S^{(n)}_\cdot C| \]
Here $w_\cdot$ is shorthand for the nerve on the subcategory of weak equivalences.
One may take a grid of maps in $C$ and add one more dimension, which has only two layers, the first containing only copies of $*$ and the second layer containing the original grid.
This gives the structure maps
\[ |w_\cdot \mc S^{(n)}_\cdot C| \ra \Omega |w_\cdot \mc S^{(n+1)}_\cdot C| \]
It is not hard to check that the space $|w_\cdot \mc S^{(n)}_\cdot C|$ is $(n-1)$-connected.
\end{df}

\begin{thm}[Waldhausen]
These structure maps are weak equivalences when $n \geq 1$. Therefore there is an equivalence of loop spaces
\[ \Omega^\infty K(C) \simeq \Omega |w_\cdot \mc S_\cdot C| \]
\end{thm}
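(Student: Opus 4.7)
The plan is to recognize the simplicial space $[k] \mapsto |w_\cdot \mc S_k C'|$, with $C' := \mc S^{(n)}_\cdot C$, as a Segal-style delooping of its degree-one piece, and then invoke the connectivity of $|w_\cdot \mc S^{(n)}_\cdot C|$ for $n \geq 1$ to bypass any group-completion step. The statement for iterated loop spaces then follows from the structure-map equivalences by passing to the colimit over $n$.

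The heart of the argument is to establish the Segal condition: for each $k$, the face-map-induced comparison
\[ |w_\cdot \mc S_k C'| \longrightarrow \prod_{i=1}^k |w_\cdot C'| \]
is a weak equivalence, sending a length-$k$ flag in $C'$ to the tuple of its successive subquotients. This is Waldhausen's additivity theorem applied to $C'$, and I would prove it by induction on $k$, building on the $k=2$ case that expresses $\mc S_2 C'$ (a cofibration together with its cofiber) as a "split extension" $C' \vee C'$ at the level of $K$-theory. Waldhausen's original proof of the $k=2$ case exploits the existence of a cylinder functor together with the gluing lemma to split the extension up to weak equivalence; this is the main technical obstacle, and everything else in the argument rests on it.

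Granting additivity, the realization $|w_\cdot \mc S_\cdot C'|$ is the classifying space of a special (Segal) simplicial space whose $0$-th term is a point and whose $1$-st term is $|w_\cdot C'|$. Segal's delooping theorem then produces a group-completion map $|w_\cdot C'| \to \Omega |w_\cdot \mc S_\cdot C'|$, which is an equivalence as soon as the source is group-like. The connectivity estimate quoted in the excerpt, namely that $|w_\cdot \mc S^{(n)}_\cdot C|$ is $(n-1)$-connected, supplies this for $n \geq 1$: the source is path-connected, so its $\pi_0$ monoid is trivial, hence a group. Unwinding definitions, the displayed map is precisely the spectrum structure map, which therefore is an equivalence for all $n \geq 1$. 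Taking $\Omega^\infty = \colim_n \Omega^n$ and starting the tower at $n = 1$ yields $\Omega^\infty K(C) \simeq \Omega |w_\cdot \mc S_\cdot C|$, as claimed.
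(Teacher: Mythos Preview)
The paper does not supply its own proof of this theorem: it is stated as a result of Waldhausen, cited from his foundational paper, and used as a black box. There is therefore no in-paper argument to compare against.

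That said, your outline is the standard route and is essentially Waldhausen's own: additivity gives the Segal condition for the simplicial space $k \mapsto |w_\cdot \mc S_k C'|$, so its realization deloops $|w_\cdot C'|$ up to group completion, and the $(n-1)$-connectivity of $|w_\cdot \mc S^{(n)}_\cdot C|$ for $n \geq 1$ makes the source already grouplike.

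One correction worth flagging: Waldhausen's additivity theorem does \emph{not} require a cylinder functor. His proof (Lemma~1.4.1 and Theorem~1.4.2 in \cite{1126}) is a direct simplicial argument---often called the swallowing lemma---that works for any Waldhausen category satisfying only the basic axioms. The cylinder functor and saturation enter for the approximation theorem and its consequences, but additivity itself is more primitive. Since additivity is the load-bearing step of your argument, you should not state it as depending on hypotheses it does not need.
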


Using (\cite{schwede2008homotopy}, Lem 2.3(ii)) this implies that the homotopy groups of $K(C)$ have bounded filtration, so $K(C)$ is a semistable symmetric spectrum. In other words, the na\" ively defined homotopy groups of $K(C)$ agree with the homotopy groups as measured in the homotopy category of symmetric spectra.

\subsection{Exact functors and the approximation property.}

\begin{df}
A functor $F: C \rightarrow D$ between Waldhausen categories is \emph{exact} if it preserves the zero object, cofibrations, weak equivalences, and pushouts along cofibrations. $F$ has the \emph{approximation property} if it satisfies two conditions:
\begin{itemize}
\item For any $X \overset{f}\rightarrow Y$ in $C$, $f$ is a weak equivalence iff $F(f)$ is a weak equivalence.
\item For any $X \in C$, $Z \in D$, and $F(X) \overset{f}\rightarrow Z$, there is a cofibration $X \rightarrow X'$ in $C$ and weak equivalence $F(X') \overset\sim\rightarrow Z$ in $D$ forming a commuting triangle
\[ \xymatrix{
F(X) \ar[rr] \ar[rd] && Z \\
& F(X') \ar[ru] } \]
\end{itemize}
\end{df}

\begin{thm}[Waldhausen]
If $C$ is a Waldhausen category, then any exact functor $F: C \rightarrow D$ induces a map $K(F): K(C) \rightarrow K(D)$. If $C$ has a cylinder functor and the saturation axiom, and if $F$ satisfies the approximation property, then $K(F)$ is an equivalence of spectra.
\end{thm}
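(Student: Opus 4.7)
The plan has three parts: functoriality, which is formal; reduction of the equivalence claim to a single base case via an inheritance argument for the approximation property along the $\mc S_\cdot$ construction; and the base case itself, proved by a Quillen Theorem A cofinality argument that exploits the cylinder functor.

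Functoriality is direct. An exact $F$ preserves every structure entering the $\mc S_\cdot$ construction -- zero, cofibrations, pushouts along cofibrations, and weak equivalences -- so one obtains a simplicial exact functor $\mc S_\cdot F : \mc S_\cdot C \to \mc S_\cdot D$. Iterating yields multisimplicial maps $w_\cdot \mc S_\cdot^{(n)} F$ commuting with the structure maps, which assemble into a morphism $K(F)$ of symmetric spectra.

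For the equivalence claim I would first prove the inheritance statement: if $F: C \to D$ has the approximation property and $C$ has cylinder plus saturation, then $\mc S_n F$ also has approximation and $\mc S_n C$ has cylinder plus saturation, for every $n$. The cylinder functor extends levelwise in $\mc S_n$ and saturation is inherited pointwise. The nontrivial piece is propagating the lifting bullet along a length-$n$ flag: given a cofibration $F(A_{0,\bullet}) \hookrightarrow Z_{0,\bullet}$ in $\mc S_n D$, I extend a lift of the first $k$ stages to a lift of the $(k+1)$st by applying the base approximation property to $F(A_{0,k}) \to Z_{0,k+1}$, then using the cylinder functor together with the gluing lemma to convert the resulting map into a genuine cofibration in $C$ coherent with the previous lifts. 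By semistability of $K(C)$, this inheritance reduces the theorem at every spectrum level to the single base case that $|w_\cdot F|: |w_\cdot C| \to |w_\cdot D|$ is a weak equivalence.

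For the base case I would apply Quillen's Theorem A to $w F$. For each $Z \in w D$, the comma category $w F \downarrow Z$ is nonempty by the lifting bullet applied to $* \to Z$, and I would argue contractibility by showing that the full subcategory on objects $(X, \varphi)$ with $\varphi : F(X) \to Z$ a weak equivalence is cofinal (immediate from the second approximation bullet) and filtered up to weak equivalence, using the cylinder functor and gluing to construct common upper bounds $(X_{12}, \varphi_{12})$ for any finite collection of such $(X_i, \varphi_i)$, with saturation ensuring that the induced map $F(X_{12}) \to Z$ is indeed a weak equivalence. Theorem A then yields the base case, completing the argument. The main obstacle will be the inheritance step for $\mc S_n$, where cofibrations of diagrams are required to satisfy the pushout-to-vertex condition on top of being pointwise cofibrations. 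Producing diagram lifts coherently along an entire flag -- rather than one morphism at a time -- requires carefully interleaving the cylinder-functor factorization with the gluing lemma so that pushouts in $\mc S_n C$ are preserved under $F$ up to weak equivalence, and any sloppiness about distinguishing strict pushouts from homotopy pushouts breaks the argument; making this coherence precise is the technical heart of the proof.
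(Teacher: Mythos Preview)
The paper gives no proof of this statement; it is quoted as a theorem of Waldhausen from \cite{1126} and used as a black box, so there is nothing in the paper itself to compare your proposal against.

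That said, your outline tracks Waldhausen's original argument in \cite{1126} fairly closely: reduce, via inheritance of the approximation hypotheses and the cylinder/saturation axioms along each $\mc S_n$, to the single base claim that $|w_\cdot F|: |w_\cdot C| \to |w_\cdot D|$ is a weak equivalence, and then prove that base case by a comma-category contractibility argument. You are right that the inheritance step is the technical heart, and the coherence issue you flag---producing lifts of an entire flag that satisfy the pushout-to-vertex cofibration condition in $\mc S_n C$, not merely pointwise cofibrations---is precisely where the work lies in Waldhausen's proof. One small correction on the base case: in $wF \downarrow Z$ every structure map $\varphi: F(X) \to Z$ is already a weak equivalence (the morphisms of $wD$ are weak equivalences by definition), so your ``cofinal subcategory on those $(X,\varphi)$ with $\varphi$ a weak equivalence'' is the whole comma category and that step is vacuous. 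The substance is entirely in the filteredness-up-to-homotopy argument, where the common refinement $X_{12}$ must be built in $C$ (not in $wC$) using the cylinder functor, and then saturation together with the first approximation bullet is what certifies the resulting maps land in $wC$. Waldhausen's own base-case argument is not literally Theorem A but a direct simplicial argument with mapping cylinders; your Theorem A route is a legitimate alternative once the point above is absorbed.
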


If $M$ is any pointed model category, the subcategory of cofibrant objects $C$ forms a Waldhausen category.
Any left Quillen adjoint $F: M \rightarrow N$ induces an exact functor on the cofibrant objects $C \rightarrow D$, and any left Quillen equivalence has the approximation property.

Unfortunately this always gives $K(C) \simeq *$, because $C$ contains infinite coproducts. 
To fix this we typically restrict to a subcategory $A$ defined by some finiteness condition.
If $A \subset C$ is such a category, closed under weak equivalences and pushouts along cofibrations, and if $B \subset D$ is the subcategory of all objects equivalent to $F(a)$ for some $a \in A$, then $F: A \rightarrow B$ still has the approximation property.
In other words, ``Quillen equivalences give equivalences on $K$-theory,'' but we always have to keep track of the finiteness conditions that define $A$ and $B$.

\subsection{Comparison of Waldhausen, Segal, and May deloopings.}

We conclude this section by discussing deloopings of $K(C) = \Omega|w_\cdot \mc S_\cdot C|$. In essence, we do not need to iterate the $\mc S_\cdot$ construction to get a spectrum; we only need to apply $\mc S_\cdot$ once, and then we may do the rest with a classical delooping machine.

Recall that if $\mc C$ is a symmetric monoidal category, the space $|\mc C| := |N_\cdot \mc C|$ can be delooped infinitely many times.
One approach due to Segal uses $\Gamma$-spaces; another approach due to May uses a monadic bar construction and the Barratt-Eccles operad.
We will use the notation $|\mc C|$, $B^1|\mc C|$, $B^2|\mc C|$, $\ldots$ to refer to the levels of the spectrum produced by either of these two machines.
There is an infinite string of maps
\[ |\mc C| \ra \Omega B^1|\mc C| \qquad B^1|\mc C| \simar \Omega B^2|\mc C| \qquad B^2|\mc C| \simar \Omega B^3|\mc C| \qquad \ldots \]
that are all weak equivalences except the first, and the first map is an equivalence if $|\mc C|$ is grouplike.
Each of these machines may be configured to produce orthogonal spectra \cite{mayequivariant}.

\begin{rmk}
It is common practice to restrict to the permutative subcategory of isomorphisms in $\mc C$ when defining these deloopings.
We have chosen not to state this as part of the machinery, because it is unnecessary.
The above theory requires no assumptions on $|N_\cdot \mc C|$, beyond being the nerve of a permutative category.
\end{rmk}

Returning to the case where $C$ is a Waldhausen category, we observe that $C$ is also a symmetric monoidal category by the coproduct.
We apply the usual rectification trick to make $C$ permutative under this coproduct \cite{maclane1963natural}.
It is easy to see that the subcategory of weak equivalences $wC$ is permutative, and furthermore $\mc S^{(n)}_\cdot C$ and $w\mc S^{(n)}_\cdot C$ are multisimplicial objects in permutative categories.
In other words, the $n$th space of the Waldhausen spectrum $|w_\cdot \mc S^{(n)}_\cdot C|$ is a valid input for the infinite loop space machine $B^{(m)}$.
In fact, when $n \geq 1$ this space is connected, so $B^{(m)}$ gives a true $m$-fold delooping.

\begin{prop}
The spaces $Y_{m,n} = B^{(m)} |w_\cdot \mc S^{(n)}_\cdot C|$ form a bispectrum which is orthogonal in the $m$ direction and symmetric in the $n$ direction.
\end{prop}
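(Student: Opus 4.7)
The plan is to construct the two directions of structure maps independently and then verify their compatibility. In the $m$-direction with $n$ held fixed, I invoke the stated configuration fact that $B^{(\cdot)}$ can be arranged to land in orthogonal spectra. Since $w_\cdot \mc S^{(n)}_\cdot C$ is a multisimplicial object in permutative categories under coproduct, its geometric realization is a permutative space, and feeding it into this machine produces the orthogonal spectrum $\{Y_{m,n}\}_{m \geq 0}$ for each $n$.

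In the $n$-direction with $m$ held fixed, I use the Waldhausen structure. The structure map $S^1 \wedge |w_\cdot \mc S^{(n)}_\cdot C| \to |w_\cdot \mc S^{(n+1)}_\cdot C|$ arises by regarding an $n$-fold grid in $\mc S$ as a $1$-simplex in one extra $\mc S$-direction whose $0$-layer is collapsed to $*$. The key observation is that this assembly is a morphism of multisimplicial permutative categories, because the inserted $\mc S_\cdot$-layer respects the coproduct structure; similarly, the $\Sigma_n$-action permuting the $\mc S_\cdot$-directions is by permutative functors. Applying $B^{(m)}$ levelwise and functorially to these data produces, for each fixed $m$, a $\Sigma_n$-action on $Y_{m,n}$ and structure maps $S^1 \wedge Y_{m,n} \to Y_{m,n+1}$ which together assemble to a symmetric spectrum in $n$.

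For the bispectrum compatibility, the orthogonal structure maps in $m$ use only the permutative structure at each fixed level $n$, whereas the symmetric structure maps in $n$ are morphisms of precisely such permutative structures, to which $B^{(m)}$ has been applied functorially. Functoriality (and the symmetric monoidal properties) of $B^{(\cdot)}$ therefore guarantees that the two families commute, yielding genuine bispectrum structure maps $S^a \wedge S^b \wedge Y_{m,n} \to Y_{m+a, n+b}$ with the required $O(a) \times \Sigma_b$-equivariance on the source. The main obstacle is the bookkeeping: verifying that the interleaved $O(m) \times \Sigma_n$-actions are consistent with the bispectrum axioms at every stage. I expect this to reduce to standard naturality properties of the chosen delooping machine and require no input beyond what is already cited, but it is the step where care is most needed.
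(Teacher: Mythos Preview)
Your approach is correct but takes a different route from the paper. The paper works with the \emph{adjoint} structure maps $|w_\cdot \mc S^{(n)}_\cdot C| \to \Omega |w_\cdot \mc S^{(n+1)}_\cdot C|$ in the $n$-direction: these are maps of $E_\infty$ spaces once one declares that $\Omega^k X$ carries the pointwise operad action, so $B^{(m)}$ applies directly, and a natural interchange $B^{(m)}(\Omega^k X) \to \Omega^k B^{(m)}(X)$ then furnishes the bispectrum structure. You instead use the unadjoint maps $S^1 \sma |w_\cdot \mc S^{(n)}_\cdot C| \to |w_\cdot \mc S^{(n+1)}_\cdot C|$ and propose to apply $B^{(m)}$ levelwise in the new $\mc S$-direction. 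For this to produce a map $S^1 \sma Y_{m,n} \to Y_{m,n+1}$ you are tacitly using that $B^{(m)}$ commutes with geometric realization of simplicial permutative spaces, so that the realization of $[k] \mapsto B^{(m)}|w_\cdot \mc S^{(n)}_\cdot \mc S_k C|$ agrees with $B^{(m)}|w_\cdot \mc S^{(n+1)}_\cdot C|$. This holds for the Segal and May machines, but it is precisely the dual of the interchange the paper isolates, and it is the one step you leave implicit. The paper's formulation buys a single clean natural transformation to check; yours stays entirely on the colimit side and avoids discussing how $B$ acts on loop spaces, at the cost of needing the realization-commutation statement instead.
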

\begin{proof}
This is straightforward once we decide how $B$ will act on a loopspace.
If $X$ is an $E_\infty$ space, or a $\Gamma$-space, then $\Omega^k X$ is either an $E_\infty$ space by ``pointwise'' operad action, or a $\Gamma$-space by applying $\Omega^k$ to every level and every map.
In either case, there is a natural interchange
\[ B^{(m)}(\Omega^k X) \ra \Omega^k B^{(m)}(X) \]
which is an equivalence if $X$ is $(k-1)$-connected.
It is now easy to check that $B^{(m)}$ is compatible with the symmetric spectrum structure on the spaces $|w_\cdot \mc S^{(n)}_\cdot C|$.
\end{proof}

As usual, such a bispectrum gives a diagram which commutes up to rearrangements of the loop coordinates:
\[ \xymatrix @R=2em{
|w_\cdot C| \ar[r] \ar[d] & \Omega|w_\cdot S_\cdot C| \ar[r]^-\sim \ar[d]^\sim &
 \Omega^2|w_\cdot S_\cdot^{(2)} C| \ar[r]^-\sim \ar[d]^-\sim & \ldots \\
\Omega B|w_\cdot C| \ar[r] \ar[d]^-\sim & \Omega^2 B|w_\cdot S_\cdot  C| \ar[r]^-\sim \ar[d]^-\sim &
 \Omega^3 B|w_\cdot S_\cdot^{(2)}  C| \ar[r]^-\sim \ar[d]^-\sim & \ldots \\
\Omega^2 B^{(2)}|w_\cdot C| \ar[r] \ar[d]^-\sim & \Omega^3 B^{(2)}|w_\cdot S_\cdot C| \ar[r]^-\sim \ar[d]^-\sim &
 \Omega^4 B^{(2)}|w_\cdot S_\cdot^{(2)} C| \ar[r]^-\sim \ar[d]^-\sim & \ldots \\
\vdots & \vdots & \vdots & 
} \]
The weak equivalences shown follow from easy connectivity arguments.
The left-hand column gives the group completion of $|wC|$ with respect to the sum, and the right-hand region gives the ``group completion'' which splits all cofiber sequences.
In general, these two completions are quite different.

Now, we will make use of the construction in the left-hand column, but it does not currently give the correct stable homotopy type. To remedy this, we apply a shift and a loops in the symmetric-spectrum direction.

\begin{df}
Let $B$ denote the Segal or May machine. To each Waldhausen category $C$ we define the bispectrum $X_{m,n}$ by
\[ X_{m,n} = \Omega B^{(m)} |w_\cdot \mc S^{(n+1)}_\cdot C| \]
This bispectrum is orthogonal in the $m$ slot and symmetric in the $n$ slot.
\end{df}

\begin{rmk}
In an earlier draft we tried simply replacing only the terms in the left-hand column $B^{(m)}|w_\cdot C|$ by $B^{(m)} \Omega |w_\cdot \mc S_\cdot C|$ and leaving the other terms unchanged, but this actually breaks the symmetric spectrum structure. In general if $X_0, X_1, X_2, \ldots$ are the levels of a symmetric spectrum, then $\Omega X_1, X_1, X_2, \ldots$ give a prespectrum in an obvious way, but this prespectrum is not a symmetric spectrum!
\end{rmk}

\begin{thm}\label{alternate_delooping}
The orthogonal spectrum
\[ \{X_{m,0}\}_{m \geq 0} = \{\Omega B^{(m)} |w_\cdot \mc S_\cdot C|\}_{m \geq 0} \]
is naturally equivalent to the prolongation of the symmetric spectrum
\[ \{X_{0,n}\}_{n \geq 0} = \{|w_\cdot \mc S_\cdot^{(n)} C|\}_{n \geq 0} \]
and therefore provides an alternate model of Waldhausen $K$-theory.
\end{thm}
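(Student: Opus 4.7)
The plan is to view $X_{m,n} = \Omega B^{(m)}|w_\cdot \mc S^{(n+1)}_\cdot C|$ as a bispectrum, as the preceding proposition establishes, and to argue that the $m=0$ slice (a symmetric spectrum) and the $n=0$ slice (an orthogonal spectrum) are both $\Omega$-spectra carrying the same stable homotopy type. Under these conditions the $n=0$ slice is canonically the prolongation of the $m=0$ slice to an orthogonal spectrum.

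First I would check the $\Omega$-spectrum conditions on each slice. For the $m=0$ slice $\{X_{0,n}\}_n = \{\Omega|w_\cdot \mc S_\cdot^{(n+1)} C|\}_n$, Waldhausen's theorem recalled above gives that $|w_\cdot \mc S_\cdot^{(n)} C| \simar \Omega|w_\cdot \mc S_\cdot^{(n+1)} C|$ is an equivalence for $n \geq 1$. After shifting up by one and looping, every structure map of this slice is an equivalence, and the shift-and-loop zig-zag exhibits it as stably equivalent to the standard Waldhausen $K$-theory $\{|w_\cdot \mc S_\cdot^{(n)} C|\}_n$---this is the identification used in the theorem statement. For the $n=0$ slice $\{X_{m,0}\}_m = \{\Omega B^{(m)}|w_\cdot \mc S_\cdot C|\}_m$, the space $|w_\cdot \mc S_\cdot C|$ is connected, so the infinite loop space machine $B$ produces genuine deloopings and each structure map $B^{(m)}|w_\cdot \mc S_\cdot C| \to \Omega B^{(m+1)}|w_\cdot \mc S_\cdot C|$ is a weak equivalence; applying $\Omega$ preserves this.

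To compare the two slices I would invoke the interchange $B^{(m)}(\Omega^k Y) \simar \Omega^k B^{(m)}(Y)$ recorded in the preceding proposition, valid whenever $Y$ is $(k-1)$-connected. Since $|w_\cdot \mc S_\cdot^{(n)} C|$ is $(n-1)$-connected, every instance of this interchange that arises in navigating the $3 \times 3$ diagram displayed just before the theorem is an equivalence. Chasing the diagonal from $X_{m,0}$ down and across to $X_{0,m}$ through the interior of the bispectrum then produces a natural zig-zag of levelwise weak equivalences between these two slices. Because the interchange maps are $O(m)$-equivariant, not merely $\Sigma_m$-equivariant, this zig-zag descends through the prolongation--forgetful adjunction to a natural stable equivalence between the orthogonal spectrum $\{X_{m,0}\}_m$ and the prolongation of the symmetric spectrum $\{X_{0,n}\}_n$, hence of the standard Waldhausen $K$-theory.

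The main obstacle I expect is the coordinate bookkeeping. Each application of the interchange moves loop coordinates past delooping coordinates, and iterating these maps around the bispectrum produces permutations whose compatibility with the $O(m)$-actions is not formal---it depends on the specific construction of the delooping machine. Verifying that the resulting zig-zag respects the full orthogonal structure, and not merely its restriction to $\Sigma_m$, is the same sort of coherence question already addressed in the preceding proposition, and I would handle it by the same method, namely by tracking the action of $B^{(m)}$ on the looped spaces levelwise and observing that the resulting isomorphisms of $O(m)$-representations are the ones built into the orthogonal structure.
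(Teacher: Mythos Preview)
Your argument assembles the right ingredients---both slices are $\Omega$-spectra, and the diagonal chase through the bispectrum does connect $X_{m,0}$ to $X_{0,m}$ by a zig-zag of weak equivalences for each fixed $m$---but the step where this ``descends through the prolongation--forgetful adjunction'' has a genuine gap. The intermediate spaces $X_{m-k,k}$ in your chase carry only an $O(m-k)\times\Sigma_k$-action, not an $O(m)$-action, and the target $X_{0,m}$ carries only a $\Sigma_m$-action; there is simply no $O(m)$-action on that side for the zig-zag to be equivariant with respect to. So at best you obtain a zig-zag between $\mathbb U\{X_{m,0}\}$ and $\{X_{0,n}\}$ in \emph{symmetric} spectra. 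The adjunction $\mathbb P\dashv\mathbb U$ turns a single map $A\to\mathbb U B$ into a map $\mathbb P A\to B$, but it does not turn a zig-zag on the symmetric side into a zig-zag of orthogonal spectra between $\mathbb P\{X_{0,n}\}$ and $\{X_{m,0}\}$. You still need an actual map of orthogonal spectra somewhere, and your construction does not supply one. The appeal to ``the same method'' as the preceding proposition does not help: that proposition only shows the $X_{m,n}$ form a bispectrum, which is a much weaker coherence statement than what you need here.

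The paper resolves this by a different device that avoids the diagonal chase entirely. It cofibrantly replaces the whole $\mathscr J\times\Sigma_S$-diagram $X_{m,n}$ and left Kan extends along $\mathscr J\times\Sigma_S\to\mathscr J\times\mathscr J\to\mathscr J$ to produce a single orthogonal spectrum $Y$. The universal property of Kan extension then furnishes honest maps of orthogonal spectra $\mathbb P\{X_{0,n}\}\to Y\leftarrow\{X_{m,0}\}$, and one checks these are $\pi_*$-isomorphisms by reducing (via semistability and the $\Omega$-spectrum hypotheses) to the case of free diagrams on spheres at a single bilevel. The Kan extension is exactly the missing structural ingredient: it manufactures an orthogonal spectrum that both sides genuinely map to.
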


The proof of this theorem reduces to a general statement about bispectra.
Let us recall some notation from \cite{mmss}. Define $\Sigma_S$ as the topological category whose objects are all finite sets (in some universe), and whose morphism spaces are
\[ \Sigma_S(m,n) = (\Sigma_n)_+ \sma_{\Sigma_{n-m}} S^{n-m} \]
Similarly, let $\mathscr J$ be the category whose objects are finite-dimensional inner product spaces in some universe $U \cong \R^\infty$, and whose morphisms are
\[ \mathscr J(V,W) = O(W)_+ \sma_{O(W-V)} S^{W-V} \]
when $V \subset W$ and $W - V$ means orthogonal complement.
Both $\Sigma_S$ and $\mathscr J$ are symmetric monoidal categories, under disjoint union of sets or direct sum of vector spaces, and there is a symmetric monoidal functor $\Sigma_S \rightarrow \mathscr J$ which assigns the set $T$ to the space $\R^T$.
We recall the following facts:

\begin{thm}\cite{mmss}
Diagrams of based spaces over $\Sigma_S$ are equivalent to symmetric spectra. Diagrams of based spaces over $\mathscr J$ are equivalent to orthogonal spectra. The left Quillen equivalence $\mathbb P$ from symmetric to orthogonal spectra is given by left Kan extension along $\Sigma_S \rightarrow \mathscr J$. The smash product of orthogonal spectra $X$ and $Y$ is the left Kan extension of the $\mathscr J \times \mathscr J$-diagram $\sma \circ (X \times Y)$ along the map $\mathscr J \times \mathscr J \rightarrow \mathscr J$ defining the symmetric monoidal structure on $\mathscr J$.
\end{thm}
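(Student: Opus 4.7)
The plan is to verify each of the four assertions in turn, with the first two being essentially formal unpackings of definitions, the third requiring a comparison of prolongation formulas, and the fourth following from the general theory of Day convolution.

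First I would unpack what a based continuous functor $X \colon \Sigma_S \to \cat{Top}_*$ consists of. Such a functor assigns to each finite set $T$ a based space $X(T)$, and to each element of $\Sigma_S(m,n) = (\Sigma_n)_+ \sma_{\Sigma_{n-m}} S^{n-m}$ a based continuous map $X(m) \to X(n)$. By the tensor-hom adjunction, this is the same data as an action of $\Sigma_n$ on $X(n)$ together with $\Sigma_m \times \Sigma_{n-m}$-equivariant iterated structure maps $X(m) \sma S^{n-m} \to X(n)$, and functoriality along composition in $\Sigma_S$ forces precisely the compatibility law required of a symmetric spectrum. The orthogonal case proceeds identically, with $O(V)$ replacing $\Sigma_n$ and $S^{W-V}$ replacing $S^{n-m}$.

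For the third assertion, by the universal property of left Kan extension the prolongation $\mathbb P X$ is computed by the coend $\mathbb P X(V) = \int^T \mathscr J(i(T),V) \sma X(T)$, where $i \colon \Sigma_S \to \mathscr J$ is the symmetric monoidal inclusion. Direct inspection at $V = \R^n$ shows this matches the standard explicit formula for turning a symmetric spectrum into an orthogonal spectrum. That $\mathbb P$ is then a left Quillen equivalence requires the model-categorical check that it preserves cofibrations and acyclic cofibrations and that the derived unit is a weak equivalence; here I would cite the detailed argument in \cite{mmss}.

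For the fourth assertion, the smash product of diagrams of based spaces over a symmetric monoidal base is, by general nonsense, the Day convolution along the monoidal structure, which is exactly the stated left Kan extension of $\sma \circ (X \times Y)$ along $\mathscr J \times \mathscr J \to \mathscr J$; what remains is to check that the resulting product agrees with the concrete construction of the smash product of orthogonal spectra from \cite{mmss}, which follows by comparing universal properties. The main obstacle, and the only portion of the argument that is not purely formal, is the Quillen equivalence in (3); this relies on the full development of the level and stable model structures in \cite{mmss} and is not quickly reproved. Everything else is a direct translation of definitions.
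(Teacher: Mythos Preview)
The paper does not prove this statement at all: it is stated as a quotation of known results from \cite{mmss}, with no argument given beyond the citation. So there is no proof in the paper to compare your proposal against.

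That said, your sketch is accurate and appropriate for what the theorem asserts. Parts (1), (2), and (4) are indeed formal unpackings of the definitions of symmetric and orthogonal spectra as enriched functors and of the smash product as Day convolution; your treatment of these is fine. You are also right to flag that part (3), the Quillen equivalence, is the only substantive claim, and that it genuinely depends on the construction of the stable model structures and the $\pi_*$-isomorphism arguments in \cite{mmss}; it cannot be reproved in a sentence. In short, your proposal is a correct outline of what a proof would look like, but for the purposes of this paper the theorem is simply being recalled, not reproved, and a citation suffices.
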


In this language, Thm \ref{alternate_delooping} reduces to the following proposition.

\begin{prop}
To each diagram $X_{m,n}$ of based spaces over $\mathscr J \times \Sigma_S$ we may functorially assign an orthogonal spectrum $Y$ and a zig-zag of orthogonal spectra
\[ \mathbb{P}\{X_{0,n}\} \ra Y \la \{X_{m,0}\} \]
If $X_{m,n}$ is semistable in the symmetric direction, then this zig-zag is on homotopy groups naturally isomorphic to
\[ \underset{n}\colim \pi_{n+k}(X_{0,n}) \ra \underset{m,n}\colim \pi_{m+n+k}(X_{m,n}) \la \underset{m}\colim \pi_{m+k}(X_{m,0}) \]
So if each row and column of $X_{m,n}$ is an $\Omega$-spectrum, these maps are equivalences.
\end{prop}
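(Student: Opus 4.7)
The plan is to build $Y$ from $X$ by a left Kan extension and read off the comparison maps from the functoriality of Kan extensions.

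Define $L \colon \mathscr J \times \Sigma_S \ra \mathscr J$ by $(V,n) \mapsto V \oplus \R^n$, and set $Y := L_! X$, computed as a \emph{homotopy} left Kan extension by first cofibrantly replacing $X$ as a $(\mathscr J \times \Sigma_S)$-diagram of based spaces. This gives an orthogonal spectrum, functorial in $X$. For the zig-zag, let $j \colon \mathscr J \ra \mathscr J \times \Sigma_S$ be the inclusion at $n = 0$ and $j' \colon \Sigma_S \ra \mathscr J \times \Sigma_S$ the inclusion at $V = 0$. Then $L \circ j = \id_{\mathscr J}$ and $L \circ j' = \iota$, where $\iota \colon \Sigma_S \ra \mathscr J$ is the standard inclusion whose left Kan extension is the prolongation $\mathbb P$. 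Applying $L_!$ to the counits $j_! j^* X \ra X$ and $j'_! (j')^* X \ra X$ then produces natural maps of orthogonal spectra $\{X_{m,0}\} = L_! j_! j^* X \ra L_! X = Y$ and $\mathbb P\{X_{0,n}\} = L_! j'_! (j')^* X \ra L_! X = Y$, which is the required zig-zag.

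To compute homotopy groups, I would evaluate $Y$ at $\R^n$ using the pointwise (homotopy) coend formula for $L_! X$ and filter by the $\Sigma_S$-coordinate. A cofinality argument inside the comma category $L \downarrow \R^n$ reduces this to a homotopy colimit whose bidegree $(p,q)$ contribution is $S^{n-p-q} \sma X_{\R^p,q}$. Passing to the colimit over $n$ and using that orthogonal spectra have honest homotopy groups, one obtains $\pi_k(Y) \cong \colim_{m,n}\pi_{m+n+k}(X_{\R^m,n})$. The map from $\{X_{m,0}\}$ becomes the inclusion of the $n = 0$ row of this diagram, and the map from $\mathbb P\{X_{0,n}\}$ becomes the inclusion of the $m = 0$ column; semistability in the symmetric direction is precisely what equates $\pi_k \mathbb P\{X_{0,n}\}$ with the naive colimit $\colim_n \pi_{n+k}(X_{0,n})$, so that the left-hand map is visibly the claimed colimit inclusion. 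When each row and column of $X_{m,n}$ is an $\Omega$-spectrum, every one-variable colimit stabilizes after a single step, so both edge colimits include isomorphically into the diagonal double colimit, making the two maps $\pi_*$-isomorphisms and therefore equivalences of orthogonal spectra.

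The main obstacle will be the pointwise analysis of the coend $Y_{\R^n}$: one must pick a cofibrant replacement of $X$ compatible with both the $O(V)$- and $\Sigma_n$-actions so that the coend genuinely computes a homotopy left Kan extension, and then run the cofinality argument through $L \downarrow \R^n$ carefully enough to isolate each bidegree. Once this identification of $\pi_k(Y)$ is in hand, matching the two counit-induced maps with the axis inclusions and invoking semistability to handle the symmetric-spectrum edge are essentially formal.
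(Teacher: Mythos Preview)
Your construction of $Y$ and the zig-zag is exactly the paper's: cofibrantly replace $X$ in the projective model structure on $\mathscr J \times \Sigma_S$-diagrams, then left Kan extend along $\mathscr J \times \Sigma_S \to \mathscr J$; the two comparison maps arise from the inclusions of the axes, just as you describe via the counits.

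Where you diverge from the paper is in the computation of $\pi_*(Y)$. You propose to analyze the coend $(L_!X)_{\R^n}$ directly via a cofinality argument in $L\downarrow\R^n$, filtering by the $\Sigma_S$-coordinate to isolate contributions $S^{n-p-q}\sma X_{\R^p,q}$. This is workable in spirit but the framing is off: $\mathscr J$ and $\Sigma_S$ are \emph{topologically enriched} categories, so the Kan extension is an enriched coend, not a colimit over an ordinary comma category, and ordinary cofinality does not apply. Moreover the claimed bidegree-$(p,q)$ contribution is only literally correct for a free diagram at $(p,q)$; for a general cofibrant $X$ one must pass through latching objects, which amounts to reducing to the free case anyway.

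The paper's argument is shorter and avoids these issues. It first replaces the $\Sigma_S$-direction by an orthogonal one (semistability guarantees this does not change level $0$ or the colimit of homotopy groups), so one may work with a bi-orthogonal spectrum. There is then an obvious comparison map $\colim_{m,n}\pi_{m+n+k}(X_{m,n})\to\pi_k(Y)$, and one checks it is an isomorphism by reducing to the generating cofibrant objects: free diagrams on a sphere at a single bilevel $(m,n)$. For those, $Y$ is visibly a free orthogonal spectrum at level $m+n$ on the same sphere, and the claim is immediate. This cell-by-cell reduction (in the style of \cite{mmss}, 11.9) replaces your cofinality analysis and makes the enriched issues disappear.
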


\begin{proof}
We make $X_{m,n}$ cofibrant in the projective model structure on $\mathscr J \times \Sigma_S$ diagrams, then define $Y$ to be the left Kan extension of $X_{m,n}$ along the map of categories
\[ \mathscr J \times \Sigma_S \ra \mathscr J \times \mathscr J \ra \mathscr J \]
The above zig-zag is then immediate from the definitions.
It remains to identify the homotopy groups of $Y$.
Prolonging a semistable symmetric spectrum to an orthogonal spectrum does not change level 0, nor does it change the colimit of its homotopy groups, so we may as well assume that $X$ is a bi-orthogonal-spectrum.
Then there is an obvious map from $\underset{m,n}\colim \pi_{m+n+k}(X_{m,n})$ into $\pi_k(Y)$, and our task is to prove it is an isomorphism.
As in \cite{mmss}, 11.9, this reduces to the case where $X$ is a free diagram on a sphere at a single bilevel $(m,n)$.
In this case, the left Kan extension is also isomorphic to a free spectrum at level $m + n$ on a sphere, and so the map on homotopy groups is clearly an isomorphism.
\end{proof}

\section{Waldhausen categories of $G$-spectra and fiberwise spectra.}

In this section we construct seven distinct Waldhausen categories, each with two finiteness conditions, for a total of fourteen examples to which we can apply Waldhausen's $\mc S_\cdot$-construction.
They are all necessary for our proof and applications.
To keep them straight, we introduce some non-standard notation to distinguish them.
In this notation, $B$ is any unbased space, $R$ is any orthogonal ring spectrum, and $G$ is a well-based topological group with the homotopy type of a cell complex. All our spaces are compactly generated (i.e. weak Hausdorff $k$-spaces).
\[ \begin{array}{rl|rl}
\textup{based }G\textup{-sets} & \mc K(G) \\
\textup{based }G\textup{-spaces} & \mc M(G) & R\textup{-modules with }G\textup{ action} & \mc M(G;R) \\
\textup{ex-fibrations over }B & \mc E(B) & R\textup{-module fibrations over }B & \mc E(B;R) \\
\textup{retractive spaces over }B & \mc R(B) & R\textup{-modules over }B & \mc R(B;R) \\
\end{array} \]
The exact functors between these categories point in the following directions:
\[ \xymatrix{
\mc K(G) \ar[d] & \\
\mc M(G) \ar[d]^-E \ar[r]^-{\Sigma^\infty} & \mc M(G;\Sph) \ar[r]^-{R \sma -} \ar[d]^-E & \mc M(G;R) \ar[d]^-E \\
\mc E(BG) \ar[d]^-I \ar[r]^-{\Sigma^\infty} & \mc E(BG;\Sph) \ar[r]^-{R \sma -} \ar[d]^-I & \mc E(BG;R) \ar[d]^-I \\
\mc R(BG) \ar@/^/[u]^-P \ar[r]^-{\Sigma^\infty} & \mc R(BG;\Sph) \ar[r]^-{R \sma -} \ar@/^/[u]^-P & \mc R(BG;R) \ar@/^/[u]^-P \\
} \]
The maps labeled $E$, $I$, and $P$ all preserve $K$-theory.
The categories $\mc R$ and $\mc E$ are used to define assembly and coassembly, respectively. The category $\mc M$ has a good model structure that the others lack. We will lift the assembly and coassembly maps from $\mc R$ and $\mc E$ to $\mc M$, and then to $\mc K$, where they can be analyzed in a more combinatorial way. We will use the categories with $R$ coefficients in the applications.

\subsection{$G$-spaces and fiberwise spaces.}

% $G$ should be well-based. That gives $BG$ the right neighborhoods so that we can build fiber bundles over it. Could just make them both cell complexes for simplicity.
Throughout this section, $G$ is any well-based topological group with the homotopy type of a cell complex and $B$ is any unbased space.
We will define Waldhausen categories of $G$-spaces and of spaces over $B$, and relate them together when $B = BG$.

Let $\mc T$ be the category of based (compactly generated) topological spaces with the Quillen model structure.
Then the category $G\mc T$ of spaces with a left $G$-action inherits a \emph{projective} model structure, in which the weak equivalences and fibrations are determined by forgetting the $G$-action.
Let $\mc M(G)$ be the subcategory of cofibrant objects; the model structure makes this a Waldhausen category.
We say a $G$-space is \emph{finite} if it is equivalent to one built out of finitely many cells $G \times D^n$, and \emph{finitely dominated} if it is a retract of a finite space in the homotopy category.

\begin{df}
$\mc M_f(G)$ is the Waldhausen category of all finitely dominated cofibrant $G$-spaces.
$\mc M^f(G)$ is the category of all cofibrant $G$-spaces whose underlying nonequivariant space is finitely dominated.
\end{df}

We point out that the $K$-theory of $\mc M_f(G)$ is one model for Waldhausen's $A(BG)$, and $\mc M^f(G)$ for $\uda(BG)$. When $G$ is finitely dominated, the inclusion of categories $\mc M_f(G) \subseteq \mc M^f(G)$ defines the Cartan map $A(BG) \rightarrow \uda(BG)$.

% $B$ should be locally contractible?
Now fix an unbased space $B$.
Recall that a retractive space over $B$ is a diagram $B \rightarrow X \rightarrow B$ in which the composite is the identity.
We call $X$ the total space.
A map of such retractive spaces $X \rightarrow Y$ is said to be a \emph{weak equivalence} if it is a weak homotopy equivalence on the total spaces.
We will also say that $X$ is \emph{fibrant} if the projection map $X \rightarrow B$ is a Hurewicz fibration.

We recall two notions of cofibration for these retractive spaces.
The map of retractive spaces $A \rightarrow X$ is an $h$-cofibration if the map of total spaces satisfies the homotopy extension property (HEP).
Equivalently, $X \times I$ retracts onto $A \times I \cup X \times 0$, in a non-fiberwise way.
On the other hand, we say $A \rightarrow X$ is an $f$-cofibration if it satisfies the fiberwise homotopy extension property (FHEP).
This means that the retract of $X \times I$ onto $A \times I \cup X \times 0$ may be chosen to respect the map into $B$.
Of course, every $f$-cofibration is an $h$-cofibration.
In this paper we will almost exclusively use $f$-cofibrations, though a result of Kieboom \cite{kieboom1987pullback} appears to imply that we could have gotten away with $h$-cofibrations instead.

\begin{df}
Let $\mc R(B)$ denote the Waldhausen category of $f$-cofibrant retractive spaces over $B$.
Let $\mc E(B)$ the subcategory of such spaces which are fibrant; these are called \emph{ex-fibrations} in the literature.
The subcategories $\mc R_f(B)$ and $\mc E_f(B)$ consist of those $X$ which are a retract up to weak equivalence under $B$ of a finite relative CW complex $B \rightarrow X'$.
The subcategories $\mc R^f(B)$ and $\mc E^f(B)$ consist of those $X$ for which the homotopy fiber of $X \rightarrow B$ is finitely dominated.
\end{df}

It is straightforward to check that these are all Waldhausen categories with cylinder functors.
For $\mc E(B)$ this requires the fact that pushouts of fibrations along $h$-cofibrations are again fibrations \cite{clapp1981duality}.

The reason we have two categories $\mc R(B)$ and $\mc E(B)$ is that they have opposite functoriality.
Let $f: A \rightarrow B$ be any map of base spaces.
Then there is an adjoint pair of functors $f_!$ and $f^*$ defined by the pushout square, resp. pullback square
\[ \xymatrix{ A \ar[r] \ar[d] & B \ar[d] \\ X \ar[r] & f_!X } \qquad \xymatrix{ f^*Y \ar[r] \ar[d] & Y \ar[d] \\ A \ar[r] & B } \]
Since topological spaces are both left and right proper, it follows that $f_!$ is exact on $\mc R(B)$ and $f^*$ is exact on $\mc E(B)$. This allows us to define a covariant and a contravariant functor
\[ \begin{array}{c}
A(B) := K(\mc R_f(B)) \\
\uda(B) := K(\mc E^f(B))
\end{array} \]

\begin{rmk}
Since pushouts and pullbacks are not strictly unique, only unique up to isomorphism, $A(B)$ and $\uda(B)$ are technically not functors. This can be remedied without changing the categories involved up to equivalence; see \cite{raptis2014map}, 3.2.1.
\end{rmk}

\begin{rmk}
The categories $\mc R_f(*)$, $\mc R^f(*)$, $\mc E_f(*)$ and $\mc E^f(*)$ are identical. So we declare that $A(*)$ and $\uda(*)$ are the same spectrum, using any of these models.
\end{rmk}

To define the Cartan map we must relate the categories $\mc E$ and $\mc R$.
The obvious inclusion $I: \mc E(B) \rightarrow \mc R(B)$ is exact, but we need a functor going the other way.
\begin{df}
Define $P: \mc R(B) \rightarrow \mc E(B)$ by taking the $f$-cofibrant retractive space $X$ to the pushout
\[ \xymatrix{
B^I \ar[d]^-\sim_-{\textup{ev}_1} \ar[r] & X \times_B B^I \ar[d]^-\sim \\
B \ar[r] & PX } \]
Here the product $X \times_B B^I$ is taken over the evaluation at 0 map $B^I \rightarrow B$, so that the product is a space over $B$ along the evaluation at 1 map.
\end{df}

Our functor $P$ gives a fibrant replacement for spaces, and later spectra, that is much simpler and more canonical than the ex-fibrant approximation functor found in \cite{ms}, 8.3 and 13.3.

\begin{prop}\label{spaces_homotopy_functor}
$P$ is an exact functor. The functors $I$ and $P$ give inverse equivalences on $K$-theory
\[ K(\mc E_f(B)) \simeq K(\mc R_f(B)) = A(B) \qquad \uda(B) = K(\mc E^f(B)) \simeq K(\mc R^f(B)) \]
Furthermore $A(B)$ and $\uda(B)$ are homotopy functors.
\end{prop}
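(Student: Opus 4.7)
The plan is to verify exactness of $P$, produce natural weak equivalences identifying the round-trip composites $I \circ P$ and $P \circ I$ with the identity functors, and then transfer homotopy invariance from the ex-fibrant side through the equivalences established in step two.

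For exactness of $P$, preservation of the zero object is automatic since $P(B) = B$. Preservation of $f$-cofibrations and weak equivalences reduces via the gluing lemma to the same properties for the functor $X \mapsto X \times_B B^I$, and these hold because $B^I \to B$ is both a Hurewicz fibration and a weak equivalence. The crucial verification is that $PX \to B$ is itself a Hurewicz fibration; I would argue as in the classical Moore mapping-path construction, where paths in $B$ lift by concatenation onto the $B^I$ coordinate, and by \cite{clapp1981duality} pushing out along the $h$-cofibration $B^I \to B$ preserves the lifting property. For the equivalence on $K$-theory, I would define the natural transformation $\eta_X \colon X \to PX$ sending $x$ to the class of $(x, c_{p(x)})$, where $c_{p(x)}$ is the constant path at $p(x)$. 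This map is natural in $X$, a map of retractive spaces, and a weak equivalence on total spaces since $B^I$ deformation retracts fiberwise onto its constant-path subspace. Hence $\eta$ is a natural weak equivalence of exact functors $\text{id}_{\mc R(B)} \Rightarrow I \circ P$, and the standard Waldhausen lemma supplies the required homotopy of $K$-theory maps. The other composite $P \circ I \Rightarrow \text{id}_{\mc E(B)}$ is handled identically. Since $\eta_X$ is a weak equivalence of total spaces, both the finite and the finitely dominated-homotopy-fiber conditions are preserved, so the equivalence restricts to $K(\mc E_f) \simeq K(\mc R_f)$ and $K(\mc E^f) \simeq K(\mc R^f)$.

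For homotopy functoriality I would work on the ex-fibrant side. Given a weak equivalence $f \colon A \to B$, the pullback $f^* \colon \mc E(B) \to \mc E(A)$ is exact, and for any fibration $Y \to B$ the map $f^* Y \to Y$ is a weak equivalence on total spaces: compare the long exact sequences of homotopy groups for the fibrations $f^* Y \to A$ and $Y \to B$, noting the fibers coincide strictly. Hence $f^*$ preserves both finiteness conditions. To conclude that $f^*$ induces an equivalence on $K$-theory I would apply Waldhausen's approximation theorem: the first condition is immediate from the vertical weak equivalences $f^* X \to X$ together with 2-out-of-3, and the second is verified by replacing a given target $Z \in \mc E^f(A)$ by the pullback of a functorial fibrant replacement of $f_! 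Z$ over $B$. Composing with the equivalence $\mc E \simeq \mc R$ from step two gives homotopy invariance for both $A(B)$ and $\uda(B)$.

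The main obstacle is checking the approximation property on the finite (as opposed to finitely dominated) side, where one must control how a finite CW structure transports across a base weak equivalence. I expect this is handled by the explicit description of $\mc E_f$ as retracts up to weak equivalence of finite relative CW complexes: such a CW structure is essentially a fiberwise datum, so pulling back and then invoking ordinary CW approximation on total spaces should suffice, but this is the step where careful tracking of cofibrations and the finiteness replacement is most likely to require real work.
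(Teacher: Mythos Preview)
Your treatment of the first two claims matches the paper: exactness of $P$ via the pushout defining it together with the fact that $-\times_B B^I$ preserves $f$-cofibrations, and the natural weak equivalence $\eta_X\colon X\to PX$ giving the homotopies $I\circ P\simeq\id$ and $P\circ I\simeq\id$.

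The homotopy-functor argument, however, has a genuine gap. For the approximation property of $f^*\colon \mc E(B')\to \mc E(A)$ you must, given $f^*X\to Z$, produce a cofibration $X\to X'$ in $\mc E(B')$ together with a weak equivalence $f^*X'\to Z$. Your candidate $X'=Pf_!Z$ does not work: the $(f_!,f^*)$ adjunction and the map $X\to PX$ only give a natural map $Z\to f^*Pf_!Z$, not one in the required direction, and there is no evident map $X\to Pf_!Z$ at all (the counit runs $f_!f^*X\to X$, not the reverse). So the second approximation condition is not verified. This is not a finiteness issue as you suspected in your last paragraph; it is a direction-of-maps issue that already appears before any finiteness condition is imposed. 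A second, smaller gap: even granting that $K(f^*)$ is an equivalence on the $\mc E$-categories, this handles $\uda$ but does not by itself show that $K(f_!)$ on $\mc R_f$ is an equivalence, which is what ``$A(B)$ is a homotopy functor'' means. Transporting through $I$ and $P$ requires knowing that $K(I\circ f^*\circ P)$ is inverse to $K(f_!)$, and that is a separate argument.

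The paper avoids the approximation theorem entirely. It simply writes down the natural weak equivalences
\[
X\overset\sim\longrightarrow f^*Pf_!X,\qquad f_!f^*PX\overset\sim\longrightarrow PX\overset\sim\longleftarrow X,
\]
coming from $\eta$ and the unit/counit of $(f_!,f^*)$, which exhibit $K(I\circ f^*\circ P)$ as a two-sided homotopy inverse to $K(f_!)$. This handles both $A$ and $\uda$ at once and sidesteps the directional problem above; I recommend you switch to this argument.
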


\begin{proof}
It is elementary to check that $PX$ is an ex-fibration and $P$ preserves weak equivalences.
Moreover, $PX$ preserves cofibrations, because $- \times_B B^I$ preserves cofibrations and there is a pushout square
\[ \xymatrix{
X \times_B B^I \ar[r] \ar[d] & Y \times_B B^I \ar[d] \\
PX \ar[r] & PY } \]
Therefore $P$ is an exact functor.
To check $K(I)$ and $K(P)$ are inverses up to homotopy, it suffices to define natural weak equivalences between $I \circ P$ or $P \circ I$ and the identity functor.
This is done by the natural fiberwise equivalence $X \overset\sim\rightarrow PX$.
To show that $A(B)$ is a homotopy functor, we take a weak equivalence $f: B \rightarrow B'$ and show that the map $K(f_!)$ has left inverse $K(I \circ f^* \circ P)$ and right inverse $K(I \circ f^* \circ P)$ up to homotopy.
To define the homotopies it is enough to define natural fiberwise equivalences
\[ X \overset\sim\ra f^*Pf_!X, \qquad f_!f^*PX \overset\sim\ra PX \overset\sim\la X \]
These come from the equivalence $X \overset\sim\rightarrow PX$ and the adjunction between $f_!$ and $f^*$.
A similar argument works for $\uda(B)$.
\end{proof}

If $\Omega B$ is finitely dominated for every component of $B$, then the object
\[ E(\Omega B) \amalg B \in \mc R_f(B) \]
lies in the category $\mc R^f(B)$.
By induction, all finite complexes $\mc R_f(B)$ are contained in $\mc R^f(B)$.
Therefore the functor $P$ may be viewed as a functor
\[ \xymatrix{ \mc R_f(B) \ar[r]^-P & \mc E^f(B) } \]
This defines the Cartan map $A(B) \rightarrow \uda(B)$.

\begin{rmk}
Our models for $A(B)$ and $\uda(B)$ are equivalent to the more classical models, defined using $h$-cofibrations and maps that are strong homotopy equivalences on the total space.
One may define a zig-zag of exact functors with the approximation property
\[ \mc R(B) \la \mc R''(B) \ra \mc R'(B) \]
in which $\mc R$ is our model, $\mc R'$ is the classical model, and $\mc R''$ is an intermediate model consisting of spaces from $\mc R$ that are strongly homotopy equivalent to a finite complex.
\end{rmk}

% $G$ needs to be well-based at the identity element for this. Needed both to make E(X) a fiber bundle and F(Y) equivalent to the fiber of Y. Otherwise the arguments break down: you can't get the right basis of neighborhoods on BG to make your fiber bundle. Or, it's harder to argue that F(-,Y) preserves equivalences
Returning to the case where $G$ is a topological group, we let $p: EG \rightarrow BG$ be the principal $G$-bundle with contractible total space.
Then the standard mixing construction and its right adjoint
\[ E(X) = EG \times_G X \qquad F(Y) = \Map_{BG}(EG,Y) \]
define functors $\mc M(G) \rightarrow \mc E(BG)$ and $\mc E(BG) \rightarrow G\mc T$, respectively.
By our assumptions on $G$, the functor $E$ is exact, and the space $E(X)$ is a fiber bundle with fiber $X$.
The functor $F$ is not exact, but if $Y$ is a fibration then $F(Y)$ is weakly equivalent to the fiber of $Y$.
% This requires that $* \ra EG$ is an $h$-cofibration of unbased spaces. The proof is by the usual lifting square, and the fact that pushout-products of $h$-cofibrations are $h$-cofibrations. We need the map on the left of the square to be a strong homotopy equivalence, hence the assumption that $G$ has the homotopy type of a cell complex. (One might try to use Ken Brown's Lemma but that requires knowing Quillen-ness of the mapping space functor.)
Therefore $X \rightarrow FEX$ is always a weak equivalence, and $EFY \rightarrow Y$ is a weak equivalence if $Y$ is a fibration.
This is enough to show that $E$ has the approximation property, proving

\begin{prop}
The functor $E$ gives equivalences on $K$-theory
\[ K(\mc M_f(G)) \simeq K(\mc E_f(BG)) \qquad K(\mc M^f(G)) \simeq K(\mc E^f(BG)) \]
\end{prop}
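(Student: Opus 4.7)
The plan is to verify that $E$ satisfies Waldhausen's approximation property, then invoke the Approximation Theorem from Section 3 together with the observation made earlier that approximation persists when one restricts to subcategories defined by finiteness conditions. Along the way I need to verify exactness of $E$ and identify the image subcategories inside $\mc E_f(BG)$ and $\mc E^f(BG)$.

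First I would check exactness: $E$ preserves the zero object, and as a left adjoint it preserves pushouts and sends cofibrations in $\mc M(G)$ to $f$-cofibrations over $BG$. Weak equivalences are preserved because the projective model structure on $G\mc T$ detects weak equivalences on underlying nonequivariant spaces, and $E(f)$ is a map of fiber bundles over $BG$ whose fibers recover the underlying map of $f$; the long exact sequence of a fibration gives the claim and also supplies condition (i) of the approximation property.

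For condition (ii), given $X \in \mc M(G)$ and a map $g : E(X) \ra Z$ in $\mc E(BG)$, I would pass to the $G$-equivariant adjoint $\ti g : X \ra FZ$ and factor it by the cylinder functor on $\mc M(G)$ as a cofibration $X \hookrightarrow X'$ followed by a weak equivalence $X' \simar FZ$. Applying $E$ produces a cofibration $E(X) \hookrightarrow E(X')$ and a composite $E(X') \ra EFZ \ra Z$ that is a weak equivalence: the first factor by exactness of $E$, and the second because $Z$ is an ex-fibration, so $EFZ \ra Z$ is a weak equivalence by the observation just before the proposition. Commutativity of the required triangle is naturality of the counit of the $E \dashv F$ adjunction.

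For the finiteness conditions I would argue as follows. The functor $E$ carries $\mc M^f(G)$ into $\mc E^f(BG)$ because the fiber of $E(X) \ra BG$ is $X$; conversely any $Z \in \mc E^f(BG)$ is weakly equivalent to $E(X')$ for a cofibrant replacement $X'$ of $FZ \in \mc M^f(G)$, so the image subcategory of $E$ is precisely $\mc E^f(BG)$ up to weak equivalence. Similarly a finite $G$-CW complex maps to a finite relative CW complex over $BG$, and retracts in the homotopy category are preserved, so $E$ sends $\mc M_f(G)$ into $\mc E_f(BG)$ with the image again the full finite-domination subcategory. The $X'$ built by the cylinder argument above automatically lies in the right finite subcategory because $X' \simeq FZ$ inherits finite domination from $Z$.

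The main obstacle I anticipate is the compatibility between equivariant Hurewicz cofibrations in $\mc M(G)$ and $f$-cofibrations in $\mc E(BG)$: the cylinder functor on cofibrant $G$-spaces produces an equivariant Hurewicz cofibration, and I need its Borel construction to be a fiberwise (not merely underlying) Hurewicz cofibration over $BG$. This follows from local triviality of $EG \ra BG$ and the standard fact that $EG \times_G -$ converts an equivariant Hurewicz cofibration into a fiberwise one, but it requires careful bookkeeping. Once this is in place, Waldhausen's Approximation Theorem applied to $E$ on each pair of finite subcategories yields both equivalences on $K$-theory simultaneously.
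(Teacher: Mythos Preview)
Your approach is essentially the paper's: the paper simply observes that the unit $X \to FEX$ is always a weak equivalence and the counit $EFY \to Y$ is a weak equivalence when $Y$ is a fibration, and declares this sufficient for the approximation property; you have spelled out the factorization for condition (ii) and the finiteness bookkeeping that the paper leaves implicit. Your flagged obstacle is not a real one: the cofibrations in $\mc M(G)$ are the projective model-category cofibrations (retracts of relative free $G$-CW inclusions), not arbitrary equivariant Hurewicz cofibrations, and $E$ visibly sends a free $G$-cell attachment to a relative CW inclusion over $BG$, which is an $f$-cofibration.
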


\begin{rmk}
Comparisons of this sort are of course not new; see for instance \cite[2.1.5]{1126}. Waldhausen's existing argument constructs an exact functor from spaces over $BG$ to $G$-spaces by tensoring with a principal $G$-bundle, but this does not suit our purposes. We are using the cellular model category of $G$-spaces to allow for easy comparison with model categories of spectra, but we are also avoiding cells in the parametrized setting so that pullbacks can be exact. These choices force us to make the exact functor go from $G$-spaces to retractive spaces, instead of the other way around.
% Our $E(X)$ is dual to Waldhausen's functor, and therefore our $F(Y)$ is actually equivalent to Waldhausen's functor. So these are different sides of the same coin.
\end{rmk}
Finally, if $G$ is a finite group, we may map the category of finite $G$-sets into $\mc M(G)$.
\begin{df}\label{not_free_df}
Let $\mc K(G)$ be the category of based $G$-sets, with cofibrations the injective maps and weak equivalences the isomorphisms.
Let $\mc N(G)$ be the category of retracts of finite $G$-cell complexes, not necessarily free.
The cofibrations are the retracts of relative $G$-cell complexes and the weak equivalences are the same as $\mc M(G)$.
The subcategories $\mc K^f(G)$ and $\mc N^f(G)$ consist of those spaces which are finitely dominated when the $G$-action is forgotten.
The subcategory $\mc K_f(G)$ consists of the finite free $G$-sets.
\end{df}

We have inclusions $\mc K^f(G) \subseteq \mc N^f(G) \supseteq \mc M^f(G)$, the latter of which gives an equivalence on $K$-theory, and $\mc K_f(G) \subseteq \mc M_f(G)$.
So we may include the $K$-theory of finite sets into the $K$-theory of spaces.
When $G = 1$, $\mc K_f(1) = \mc K^f(1)$ is the category of finite based sets $\mc F$.
Its $K$-theory is the sphere spectrum, by the Barratt-Priddy-Quillen theorem \cite{barratt1972homology}.

\subsection{$R[G]$-module spectra and fiberwise $R$-modules.}

We will construct two models for what could be called the $K$-theory of the group ring $R[G]$, or the $A$-theory of $BG$ with coefficients in $R$.
To define coassembly by a universal property, it is essential to use parametrized spectra, and we show how to set this up while avoiding the model category-theoretic difficulties encountered in \cite{ms}.

Let $R$ be an orthogonal ring spectrum and $G$ a topological group.
We use $R[G]$ as shorthand for the ring spectrum $R \sma G_+$.
Recall that the category of left $R[G]$-module spectra has a cofibrantly generated model structure in which the weak equivalences and fibrations are defined by the forgetful functor to orthogonal spectra \cite{mmss}.
We let $\mc M(G;R)$ be the subcategory of all cofibrant $R[G]$-modules.
Of course, the functor $R \sma -$ is an exact functor from $\mc M(G)$ into $\mc M(G;R)$.
\begin{df}
Let $\mc M_f(G;R)$ be the thick subcategory of modules generated by $R[G]$. Let $\mc M^f(G;R)$ be the subcategory of those modules whose underlying $R$-module is in the thick subcategory of $R$. The $K$-theory and Swan theory of $R[G]$ is simply the Waldhausen $K$-theory of these two categories:
\[ \begin{array}{c}
K(R[G]) := K(\mc M_f(G;R)) \\
G^R(R[G]) := K(\mc M^f(G;R))
\end{array} \]
\end{df}

\begin{rmk}
If $R$ and $G$ are discrete, then $(HR)[G] \simeq H(R[G])$ is a ring spectrum, and the above definition is equivalent to the definitions of $K$-theory or Swan theory using Quillen's $Q$-construction or $BGL_\infty(R)$ and Quillen's plus construction.
\end{rmk}

Now we consider parametrized $R$-module spectra.
Given retractive spaces $X$ and $Y$ over $A$ and $B$, respectively, the \emph{external smash product} $X \barsmash Y$ is the retractive space over $A \times B$ defined by the pushout
\[ \xymatrix{
(X \times B) \cup (A \times Y) \ar[r] \ar[d] & X \times Y \ar[d] \\
A \times B \ar[r] & X \barsmash Y } \]
In particular, if $A = *$ then $X$ is a based space and the external smash product $X \barsmash Y$ is another retractive space over $B$.
This operation tensors and enriches the category of retractive spaces over $B$ over the category of based spaces.
\begin{df}
A \emph{parametrized prespectrum} $X$ over $B$ is a sequence of retractive spaces $X_n$ with structure maps $S^1 \barsmash X_{n-1} \rightarrow X_n$. A \emph{parametrized orthogonal spectrum} is a continuous diagram of retractive spaces indexed by $\mathscr J$.
\end{df}

If $X$ and $Y$ are orthogonal spectra over spaces $A$ and $B$, respectively, the external smash product $X \barsmash Y$ is a parametrized spectrum over $A \times B$ defined by left Kan extension of the $\mathscr J \times \mathscr J$-diagram $\{ X_m \barsmash Y_n \}$ along the direct sum map $\mathscr J \times \mathscr J \rightarrow \mathscr J$.
In other words, it is the Day convolution of $X$ and $Y$ as diagrams over $\mathscr J$.
If $R$ is an orthogonal ring spectrum, a \emph{parametrized $R$-module} is an orthogonal spectrum $X$ equipped with a map $R \barsmash X \rightarrow X$ with the usual associativity properties.

Now we will build a Waldhausen category of parametrized $R$-modules.
It would be convenient to use the existing model structure from \cite{ms}, but pullbacks of the cofibrations are not cofibrations, so $\uda$ would not be a functor.
Instead we use a less cellular notion of cofibration.

\begin{df}
A map of parametrized prespectra $X \rightarrow Y$ is a \emph{(Reedy) cofibration} if in each square
\[ \xymatrix{
S^1 \barsmash X_{n-1} \ar[r] \ar[d] & X_n \ar[d] \\
S^1 \barsmash Y_{n-1} \ar[r] & Y_n } \]
the map from the pushout to $Y_n$ is an $f$-cofibration of retractive spaces over $B$.
When $n = 0$ we interpret $X_{n-1} = B$, so $X_0 \rightarrow Y_0$ must be an $f$-cofibration.
\end{df}

We restrict our attention to parametrized spectra $X$ that are Reedy cofibrant.
This implies that all the structure maps $S^1 \barsmash X_{n-1} \rightarrow X_n$ are $f$-cofibrations, and each level $X_n$ is $f$-cofibrant.
So the fibrant replacement $PX_n$ from the last section is always defined.
It turns to have the following convenient property.
\begin{prop}
For any pair of retractive spaces $X$ and $Y$, there is a homeomorphism $PX \barsmash PY \cong P(X \barsmash Y)$. This is associative and commutative in the usual way, and agrees with the inclusion of $X \barsmash Y$. In other words, both $P$ and the transformation $1 \rightarrow P$ are strong symmetric monoidal.
\end{prop}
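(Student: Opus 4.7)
The plan is to construct both sides of the desired homeomorphism as colimits of the same elementary diagram, using an intermediate construction that is strong symmetric monoidal on the nose, and then to deduce the coherence statements from the uniqueness of pushouts.

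First, I would factor the construction $P$ by introducing $QX := X \times_A A^I$, where the fiber product is taken over $\textup{ev}_0 : A^I \to A$. Then $QX$ is naturally a retractive space over $A$ via $\textup{ev}_1$, with section $a \mapsto (s_X(a), \textup{const}_a)$, and $PX$ is the pushout of $A \overset{\textup{ev}_1}\la A^I \overset{j_X}\ra QX$, where $j_X(\gamma) = (s_X(\gamma(0)), \gamma)$. The first key step is to verify that $Q$ is strong symmetric monoidal: the canonical identification $(A \times B)^I = A^I \times B^I$, combined with the fact that ordinary products distribute over fiber products, gives a natural homeomorphism $Q(X \barsmash Y) \cong QX \barsmash QY$ over $A \times B$, compatible with section and path inclusions.

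Next I would compute both sides. By definition $P(X \barsmash Y)$ is the pushout of $(A \times B) \la (A \times B)^I \ra Q(X \barsmash Y)$, which via the previous step becomes the pushout of $(A \times B) \la A^I \times B^I \ra QX \barsmash QY$. On the other hand, the external smash product is a left adjoint in each variable (using the parametrized function object), hence preserves pushouts separately in each variable. Applying this to $PX = A \cup_{A^I} QX$ and $PY = B \cup_{B^I} QY$ expresses $PX \barsmash PY$ as a bi-pushout, which a direct diagrammatic check identifies with the pushout computed above.

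The main obstacle is the bookkeeping in this bi-pushout calculation. After smashing $PX = A \cup_{A^I} QX$ with $PY = B \cup_{B^I} QY$ one obtains a $3 \times 3$ array of pieces, and one must verify that the two ``collapses'' along $A^I$ and $B^I$ combine compatibly into the single collapse along $A^I \times B^I$ that produces $P(X \barsmash Y)$; this uses that the section $A \to QX$ agrees via $j_X$ with the constant-path inclusion $A \to A^I \to QX$ after passage to $PX$. Once this is done, the symmetric monoidal coherences for $P$ follow formally from the corresponding coherences for $Q$ (which are immediate from those of the path space functor and fiber product) together with the uniqueness of pushouts. Finally, the strong monoidality of the natural transformation $X \to PX$ is immediate from its explicit formula $x \mapsto (x, \textup{const}_{p(x)})$, which visibly intertwines internal and external smash.
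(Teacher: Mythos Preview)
Your proposal contains a genuine error: the claim that $Q(X \barsmash Y) \cong QX \barsmash QY$ is false. The section of $QX$ as a retractive space over $A$ (via $\textup{ev}_1$) is $a \mapsto (s_X(a),\textup{const}_a)$, so it consists only of \emph{constant} paths at section points. Consequently, in the external smash $QX \barsmash QY$ a point $((x,\alpha),(y,\beta))$ is collapsed to the base only when, say, $x = s_X(\alpha(0))$ \emph{and} $\alpha$ is constant. By contrast, in $Q(X \barsmash Y)$ the point $((x,y),(\alpha,\beta))$ is collapsed as soon as $x = s_X(\alpha(0))$, with no condition on $\alpha$. For a concrete failure, take $X$ and $Y$ to be the zero objects over $A$ and $B$; then $X \barsmash Y$ is the zero object over $A\times B$ and $Q(X\barsmash Y)=A^I\times B^I$, whereas $QX \barsmash QY = A^I \barsmash B^I$, which is the quotient of $A^I\times B^I$ collapsing the pairs with one constant path. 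These differ as soon as $A$ or $B$ has a nonconstant path. Since your computation of $P(X\barsmash Y)$ as a pushout with vertex $QX\barsmash QY$ rests on this identification, the argument breaks at that point.

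The paper avoids this trap by working with the ordinary product rather than the external smash at the $Q$ level: one has the honest homeomorphism
\[
QX \times QY \;=\; (X\times_A A^I)\times(Y\times_B B^I) \;\cong\; (X\times Y)\times_{A\times B}(A\times B)^I,
\]
and both $PX\barsmash PY$ and $P(X\barsmash Y)$ are quotients of this common space (pushouts of quotient maps are quotient maps). One then checks by hand that the two equivalence relations coincide. The point is precisely that passing from $Q$ to $P$ erases the discrepancy above: in $PX$ every $(s_X(\alpha(0)),\alpha)$ is identified with the section at $\alpha(1)$, regardless of whether $\alpha$ is constant, so the two collapsing rules match after the $P$-step even though they do not at the $Q$-step. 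Your instinct to factor through $Q$ is fine, but the monoidality you need is with respect to $\times$, not $\barsmash$, and the final identification has to be done at the level of quotients rather than deduced from a monoidal structure on $Q$.
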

\begin{proof}
Suppose $X$ is a space over $B$ and $Y$ is over $B'$. There is an obvious homeomorphism
\[ (X \times_B B^I) \times (Y \times_{B'} (B')^I) \cong (X \times Y) \times_{B \times B'} (B \times B')^I \]
The spaces $PX \barsmash PY$ and $P(X \barsmash Y)$ are both quotients of this space, since a pushout of a quotient map is again a quotient map.
It is easy to check that their equivalence relations coincide.
\end{proof}

It follows formally that if $X$ is a parametrized spectrum, the spaces $PX_n$ assemble into another parametrized spectrum $PX$.
The maps of spaces $X_n \rightarrow PX_n$ give a map of spectra $X \rightarrow PX$ that is a weak equivalence on every spectrum level.
Moreover if $X$ is a parametrized $R$-module spectrum, then so is $PX$, and the map $X \rightarrow PX$ is $R$-linear.

\begin{df}
Let $\mc R(B;R)$ be the category of parametrized $R$-module spectra whose underlying prespectra are Reedy cofibrant.
Let $\mc E(B;R)$ be the further subcategory for which the projections $X_n \rightarrow B$ are also Hurewicz fibrations (and so every level is in $\mc E(B)$).
\end{df}

In both of these categories, we take our cofibrations to be the Reedy cofibrations.
If $X,Y \in \mc E(B;R)$, then a weak equivalence is a map $X \rightarrow Y$ which when restricted to each fiber is a stable equivalence of ordinary spectra.
If instead $X,Y \in \mc R(B;R)$, then a weak equivalence is a map $X \rightarrow Y$ for which $PX \rightarrow PY$ is a stable equivalence on each fiber.
These definitions are consistent because if $X \in \mc E(B;R)$ then the map $X \rightarrow PX$ is an equivalence on every fiber.

These choices make $\mc R(B;R)$ and $\mc E(B;R)$ into Waldhausen categories.
The only nontrivial axiom is the gluing lemma, which is proven using the level equivalences
\[ Y \cup_X Z \ra PY \cup_{PX} PZ \la PY \cup_{PX} PX \barsmash I_+ \cup_{PX} PZ \]
This last construction gives an ex-fibration, and on each fiber it is the ordinary homotopy pushout of spectra, so it preserves all stable equivalences.

The functors $E$ and $F$ from the previous section may be applied to each spectrum level of our $R$-module spectra, giving functors
\[ E: \mc M(G;R) \ra \mc E(BG;R) \qquad F: \mc E(BG;R) \ra R[G]\textup{-Mod} \]
We define the subcategories $\mc R_f(B;R)$ and $\mc E_f(B;R)$ by taking those spectra which are equivalent to $E(X)$ for some $X \in \mc M_f(G;R)$.
Similarly, we let $\mc E^f(B;R)$ be all spectra whose fibers are retracts of the finite $R$-cell modules.
Equivalently, we take those spectra $X$ that are equivalent to some spectrum in the image $E(\mc M^f(G;R))$.

With these notions in place, we have a definition for ``$A$-theory with coefficients in $R$,'' as advertised in the introduction:
\[ \begin{array}{c}
A(B;R) := K(\mc R_f(B;R)) \\
\uda(B;R) := K(\mc E^f(B;R))
\end{array} \]

It is straightforward to verify that $I_+ \barsmash X$ gives a cylinder functor.
The functors $I$, $P$, and $E$ then give equivalences on $K$-theory by the same arguments as before, so we get equivalences
\[ \begin{array}{c}
K(R[G]) = K(\mc M_f(G;R)) \simeq K(\mc R_f(BG;R)) = A(BG;R) \\
G^R(R[G]) = K(\mc M^f(G;R)) \simeq K(\mc E^f(BG;R)) = \uda(BG;R)
\end{array} \]

If $f: A \rightarrow B$ is any map of base spaces, then $f_!$ and $f^*$ make $A(B;R)$ and $\uda(B;R)$ into homotopy functors. We list a few of the details of this argument. The functors $f_!$ and $f^*$ are defined on spectra by taking the pushout or pullback along $f$ on each spectrum level. They give $R$-module spectra in a canonical way because $f_!$ commutes with fiberwise suspension and $f^*$ commutes with fiberwise loops. It is easy to check that both $f_!$ and $f^*$ preserve cofibrations, and that $f^*$ preserves weak equivalences. It is also true that $f_!$ preserves weak equivalences, but this relies on the following result of Clapp.
Recall that a parametrized spectrum $Z$ is an \emph{$\Omega$-spectrum} if every level is a fibration and the adjoint structure maps $Z_{n-1} \rightarrow \Omega_B Z_n$ are all homeomorphisms.

\begin{prop}[\cite{clapp1984homotopy}]
The inclusion of $\Omega$-spectra into all spectra has a left adjoint $L$, called spectrification, which on the Reedy cofibrant spectra is given by the usual construction $(LX)_n = \colim_k \Omega_B^k X_{n+k}$.
Therefore $[X,Y] \cong [LX,Y]$ if $Y$ is an $\Omega$-spectrum.
If $X$ and $Y$ are Reedy cofibrant $\Omega$-spectra with levels homotopy equivalent to relative CW complexes, then $X \rightarrow Y$ is a weak equivalence iff it is a homotopy equivalence of parametrized spectra.
\end{prop}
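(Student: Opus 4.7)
The plan is to mirror the classical non-parametrized spectrification argument, paying careful attention to where the $f$-cofibration structure built into Reedy cofibrancy is needed to keep the point-set manipulations homotopically sound. First I would construct $L$ on a Reedy cofibrant $X$ by the formula $(LX)_n = \colim_k \Omega_B^k X_{n+k}$, where the maps in the defining sequence are obtained by applying $\Omega_B^k$ to the adjoints $X_{n+k} \to \Omega_B X_{n+k+1}$ of the structure maps of $X$. The first substantial check is that $LX$ is an $\Omega$-spectrum, i.e.\ that the canonical map $(LX)_n \to \Omega_B (LX)_{n+1}$ is a homeomorphism. This reduces to showing that the right adjoint $\Omega_B$ commutes with the sequential colimit in play, which is the place where Reedy cofibrancy is essential: the $f$-cofibration condition guarantees the FHEP along each stage, which is exactly what is needed to pull fiberwise loops inside the colimit and to ensure that the colimit is taken along fiberwise closed inclusions over $B$.

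Next I would establish the adjunction. The unit $\eta_X\colon X \to LX$ is the inclusion of the $k=0$ stage of the colimit. Given an $\Omega$-spectrum $Y$ and a map $f\colon X \to Y$, the homeomorphisms $Y_n \xrightarrow{\cong} \Omega_B^k Y_{n+k}$ permit a canonical extension of $f$ across the diagram defining $LX$, and assembling these yields a unique $\tilde f\colon LX \to Y$ with $\tilde f \circ \eta_X = f$; uniqueness is a formal consequence of the colimit's universal property. This gives $[X,Y] \cong [LX,Y]$ for $Y$ an $\Omega$-spectrum. For the Whitehead-type statement, let $f\colon X \to Y$ be a weak equivalence of Reedy cofibrant $\Omega$-spectra with CW-type levels. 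I would first argue that each level $f_n$ is a fiberwise weak equivalence: since $X$ and $Y$ are already $\Omega$-spectra on the nose, one unloops a fiber-stable equivalence at a high level back down to $f_n$, which forces a fiberwise weak equivalence between spaces of CW type. Dold's parametrized Whitehead theorem then upgrades each $f_n$ to a fiberwise homotopy equivalence. To promote this to a homotopy equivalence of parametrized spectra, I would build a coherent homotopy inverse $g\colon Y \to X$ together with the accompanying homotopies inductively: given compatible choices at level $n-1$, extend to level $n$ using the FHEP supplied by the Reedy cofibration $S^1 \barsmash Y_{n-1} \cup \ldots \to Y_n$.

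The main obstacle is the very first step, the commutation of $\Omega_B$ with the spectrification colimit. Fiberwise mapping spaces are markedly more delicate than ordinary ones: a filtered colimit of fibrations is typically not a fibration, and $\Omega_B$, being a right adjoint, has no \emph{a priori} reason to commute with any colimit. What rescues the argument is precisely the $f$-cofibration hypothesis, which forces every stage of the colimit to be a fiberwise closed inclusion with the FHEP, so that compact-source mapping space arguments behave over $B$ as they do over a point. Making these point-set details rigorous is the real content that Clapp's paper provides, and is the part one has no hope of shortcutting with formal nonsense.
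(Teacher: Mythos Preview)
The paper does not give its own proof of this proposition; it is quoted verbatim as a result of Clapp \cite{clapp1984homotopy} and used as a black box, with only the corollary that follows it argued in the text. So there is no ``paper's proof'' to compare your proposal against. Your outline is the standard one and is essentially what Clapp carries out; you yourself already note in the last paragraph that the genuine content --- making the commutation of $\Omega_B$ with the spectrification colimit rigorous over $B$ --- is supplied by Clapp's paper rather than by anything you wrote.

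One point your sketch does not address: in this paper an $\Omega$-spectrum is required to have each level a Hurewicz fibration over $B$, not merely to satisfy the homeomorphism condition on adjoint structure maps. Your construction of $LX$ checks the latter but not the former, and a sequential colimit of fibrations need not be a fibration. This is again part of the point-set work handled in Clapp's paper, so it is consistent with your concluding disclaimer, but if you intend this as a self-contained proof you would need to supply that step as well.
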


\begin{cor}
Suppose $X$ and $Y$ are Reedy cofibrant and each level is homotopy equivalent to a relative CW complex. Then $X \rightarrow Y$ is an equivalence of parametrized prespectra iff it induces a bijection $[Y,Z] \rightarrow [X,Z]$ for all $\Omega$-spectra $Z$.
\end{cor}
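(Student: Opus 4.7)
The plan is to use the preceding Proposition to translate the corollary into a Yoneda-style statement in the homotopy category of parametrized $\Omega$-spectra. The spectrification functor $L$ is left adjoint to the inclusion of $\Omega$-spectra, so the units $\eta_X \colon X \to LX$ and $\eta_Y \colon Y \to LY$ induce natural bijections $\eta_X^* \colon [LX, Z] \cong [X, Z]$ and $\eta_Y^* \colon [LY, Z] \cong [Y, Z]$ for every $\Omega$-spectrum $Z$, and the naturality square
\[
\xymatrix @R=1.5em{
[Y,Z] \ar[r]^-{f^*} \ar[d]_-{\eta_Y^*} & [X,Z] \ar[d]^-{\eta_X^*} \\
[LY,Z] \ar[r]^-{(Lf)^*} & [LX,Z]
}
\]
commutes with vertical bijections. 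So $f^*$ is a bijection for every $\Omega$-spectrum $Z$ if and only if $(Lf)^*$ is, and our task reduces to showing that $f$ is a weak equivalence iff $Lf$ is a homotopy equivalence of $\Omega$-spectra.

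For the forward direction, a weak equivalence $f$ of Reedy cofibrant prespectra induces a fiberwise equivalence $Lf$ of $\Omega$-spectra: the formula $(LX)_n = \colim_k \Omega_B^k X_{n+k}$ and the fiberwise definition of weak equivalence make this a routine check. Because $LX$ and $LY$ remain levelwise homotopy equivalent to relative CW complexes (since $\Omega_B$ and sequential colimits along fiberwise cofibrations preserve this class up to homotopy equivalence), the Proposition upgrades $Lf$ to a homotopy equivalence of parametrized prespectra, so $(Lf)^*$ is a bijection for every target.

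Conversely, if $(Lf)^*$ is always a bijection, specializing to $Z = LX$ yields a unique homotopy class $g \colon LY \to LX$ with $g \circ Lf \simeq \id_{LX}$. Then $(Lf \circ g) \circ Lf \simeq Lf \simeq \id_{LY} \circ Lf$, and injectivity of $(Lf)^* \colon [LY, LY] \to [LX, LY]$ forces $Lf \circ g \simeq \id_{LY}$. Hence $Lf$ is a homotopy equivalence, and therefore a weak equivalence of $\Omega$-spectra by the Proposition; pulling back to fibers (where the levels of $LX$ and $LY$ compute the stable homotopy of the fibers of $X$ and $Y$) shows that $f$ itself is a weak equivalence of prespectra. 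The main technical obstacle is the bookkeeping needed to verify that $LX$ and $LY$ satisfy the CW-like hypothesis of the Proposition; granted that, the remainder is a direct Yoneda argument applied to the two representable functors $[LX,-]$ and $[LY,-]$ on $\Omega$-spectra.
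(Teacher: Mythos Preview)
Your argument is the natural one and matches what the paper intends: the Corollary is stated without proof, precisely because it follows from the Proposition by the adjunction-plus-Yoneda reduction you describe. Two small points: in your naturality square the vertical arrows labelled $\eta_X^*,\eta_Y^*$ point the wrong way (precomposition with $\eta$ goes from $[LX,Z]$ to $[X,Z]$, not the reverse), and the ``bookkeeping'' you flag---that $LX,LY$ remain Reedy cofibrant with levels homotopy equivalent to relative CW complexes---is exactly the content that makes the Proposition applicable and is handled in Clapp's paper rather than here.
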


Exactness of $f_!$ follows immediately, because of the natural isomorphism $[f_!X,Z] \cong [X,f^*Z]$, and the observation that $f^*Z$ is always an $\Omega$-spectrum. Once we know that $f_!$ and $f^*$ are exact, the proof that they are homotopy functors is the same as the proof in Proposition \ref{spaces_homotopy_functor}.

\subsection{Pairings.}\label{pairing_section}

Next we will construct the external pairings on $A(B;R)$ and $\uda(B;R)$, similar to those found in \cite{weiss2000products,williams2000bivariant}. This requires us to use a more complicated and restrictive definition of ``cofibration.'' Our cofibrations must include the images of $R[G]$-module cofibrations under \mbox{$EG \times_G -$}, be preserved by pushouts and pullbacks, and interact well with smash products. To our knowledge, there is no notion of ``cofibration'' in the literature that fits the bill, and we address this by establishing one such notion for parametrized orthogonal spectra. The definition we give is in fact very close to the notion of a ``flat cofibration'' or ``$S$-cofibration'' of symmetric spectra \cite[5.3.6]{hovey2000symmetric}, \cite[III, \S 2]{schwede2012symmetric}.

\begin{df}
If $X$ is a parametrized orthogonal spectrum, then regard $X$ as a diagram $\mathscr J \rightarrow Top_B$. Define the \emph{$n$-skeleton} $\sk^n X$ by restricting to the objects of $\mathscr J$ which have dimension at most $n$, and then taking an enriched left Kan extension back to all of $\mathscr J$. Define the \emph{$n$th latching object} $L_n X$ to be $(\sk^{n-1} X)_n$.
\end{df}

Concretely, the $m$th level of $\sk^n X$ is given by the coequalizer
\[ \bigvee_{i \leq j \leq n} X_i \barsmash \mathscr J(i,j) \sma \mathscr J(j,m) \rightrightarrows \bigvee_{i \leq n} X_i \barsmash \mathscr J(i,m) \ra (\sk^n X)_m \]
where $\bigvee$ refers to coproduct of retractive spaces, or union along $B$.
Of course, if $m \leq n$ then $(\sk^n X)_m \cong X_m$. If $m \geq n$, then $(\sk^n X)_m$ is a quotient of $X_n \barsmash_{O(n)} \mathscr J(n,m)$.

\begin{df}
A map $X \rightarrow Y$ of parametrized orthogonal spectra is an \emph{orthogonal (Reedy) cofibration} if in each square
\[ \xymatrix{
L_n X \ar[r] \ar[d] & X_n \ar[d] \\
L_n Y \ar[r] & Y_n } \]
the map from the pushout to $Y_n$ is an $O(n)$-equivariant $f$-cofibration of retractive spaces over $B$.
In other words, there is a fiberwise retract of $Y_n$ onto an appropriate subspace, and this map \emph{also} respects the $O(n)$-action.
\end{df}

Clearly the orthogonal Reedy cofibrations are closed under pushouts, transfinite compositions, and retracts. We will show that they are generated by semi-free spectra on maps of spaces $K \rightarrow L$ having the $O(n)$-equivariant fiberwise homotopy extension property.

\begin{df}
A \emph{semi-free} orthogonal spectrum $F_n^\triangleleft K$ on a based $O(n)$-space $K$ is a spectrum which at level $m$ is $K \sma_{O(n)} \mathscr J(n,m)$. The functor $F_n^\triangleleft(-)$ is the left adjoint of the forgetful functor that sends an orthogonal spectrum $X$ to its $n$th space $X_n$ with the $O(n)$-action. These definitions still make sense if $K$ is a retractive space with a fiberwise $O(n)$-action.
\end{df}

\begin{lem}
If $K \rightarrow L$ is an $O(n)$-equivariant $f$-cofibration then $F_n^\triangleleft K \rightarrow F_n^\triangleleft L$ is an orthogonal Reedy cofibration, and on level $m$ it is an $O(m)$-equivariant $f$-cofibration.
\end{lem}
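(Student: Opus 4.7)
The plan is to compute the skeleton and latching objects of the semi-free spectrum $F_n^\triangleleft K$ and then read off both claims directly from those formulas. The key observation is that $F_n^\triangleleft K$ is, by construction, the enriched left Kan extension of the one-object diagram $\{n\} \mapsto K$ along $\{n\} \hookrightarrow \mathscr J$, and this inclusion factors through $\{n\} \hookrightarrow \mathscr J_{\leq k} \hookrightarrow \mathscr J$ for every $k \geq n$. By functoriality of left Kan extension, the counit $\sk^k F_n^\triangleleft K \to F_n^\triangleleft K$ is therefore an isomorphism whenever $k \geq n$. For $k < n$ the restriction of $F_n^\triangleleft K$ to $\mathscr J_{\leq k}$ is levelwise $B$, since $\mathscr J(n, i) = *$ whenever $i < n$, so $\sk^k F_n^\triangleleft K$ is the trivial spectrum. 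Setting $k = m-1$ gives $L_m F_n^\triangleleft K = B$ for $m \leq n$, while for $m > n$ the inclusion $L_m F_n^\triangleleft K \hookrightarrow (F_n^\triangleleft K)_m$ is the identity.

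With these latching objects in hand the orthogonal Reedy cofibration condition is immediate level by level. For $m < n$ every space in the Reedy square is $B$. For $m = n$ both latching objects are $B$, so the Reedy pushout is $(F_n^\triangleleft K)_n = K$ and the map to $(F_n^\triangleleft L)_n = L$ is exactly the hypothesized $O(n)$-equivariant $f$-cofibration. For $m > n$ the latching maps on both sides are identities, so the Reedy pushout is $(F_n^\triangleleft L)_m$ and the comparison is the identity. For the second assertion, the level-$m$ map $K \sma_{O(n)} \mathscr J(n,m) \to L \sma_{O(n)} \mathscr J(n,m)$ is induced from $K \to L$ by external smash with the commuting $O(n) \times O(m)$-space $\mathscr J(n,m)$ followed by passage to $O(n)$-orbits. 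An $O(n)$-equivariant fiberwise retract $r \colon L \times I \to K \times I \cup L \times 0$ over $B$, which exists by hypothesis, becomes after applying $- \sma_{O(n)} \mathscr J(n,m)$ a fiberwise retract $(L \sma_{O(n)} \mathscr J(n,m)) \times I \to (K \sma_{O(n)} \mathscr J(n,m)) \times I \cup (L \sma_{O(n)} \mathscr J(n,m)) \times 0$; since $O(n)$ acts trivially on $I$ and since $-\times I$, external smash, and the $O(n)$-orbit functor are all left adjoints, these operations commute and the retract descends.

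The only place where one must work carefully is in justifying this last interchange of operations and checking that the descended retract respects both the projection to $B$ and the residual right $O(m)$-action on $\mathscr J(n,m)$. The hypothesis of $f$-cofibration (rather than mere $h$-cofibration) is essential here, because we need the retract chosen for $K \to L$ to be fiberwise over $B$ in the first place; and the commuting $O(n) \times O(m)$-structure on $\mathscr J(n,m)$ guarantees that the $O(m)$-action survives the $O(n)$-quotient intact. Once this bookkeeping is verified, both claims follow directly from the skeleton computation and the basic colimit-preserving properties of the external smash.
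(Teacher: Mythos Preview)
Your proposal is correct and follows essentially the same approach as the paper's proof. The paper compresses your latching-object computation into the single sentence ``the relevant map is only nontrivial at spectrum level $n$,'' and handles the second claim by asserting that $K \barsmash_{O(n)} A \to L \barsmash_{O(n)} A$ has the $O(m)$-equivariant homotopy extension property for any $O(n) \times O(m)$-space $A$; your explicit retract-descent argument is just the verification of this assertion.
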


\begin{proof}
The relevant map is only nontrivial at spectrum level $n$, where it is the $O(n)$-cofibration $K \rightarrow L$. Therefore $F_n^\triangleleft K \rightarrow F_n^\triangleleft L$ is an orthogonal Reedy cofibration. It is easy to check that the map
\[ K \barsmash_{O(n)} A \ra L \barsmash_{O(n)} A \]
has the $O(m)$-equivariant homotopy extension property for any $O(n) \times O(m)$-space $A$. In particular, this applies to the map $F_n^\triangleleft K \rightarrow F_n^\triangleleft L$ at level $m$, which is
\[ K \barsmash_{O(n)} \mathscr J(n,m) \ra L \barsmash_{O(n)} \mathscr J(n,m) \]
\end{proof}

\begin{df}
If $X \rightarrow Y$ is a map of parametrized orthogonal spectra, define its \emph{$n$-skeleton} as the pushout
\[ \xymatrix{
\sk^n X \ar[r] \ar[d] & X \ar[d] \\
\sk^n Y \ar[r] & \sk^n(X \ra Y) } \]
Clearly the map $X \rightarrow Y$ is filtered by these skeleta, $Y$ is the colimit of the skeleta.
\end{df}

\begin{lem}\label{easy_pushout}
For each map of parametrized orthogonal spectra $X \rightarrow Y$ there is a natural pushout square
\[ \xymatrix{
F_n^\triangleleft (X_n \cup_{L_n X} L_n Y) \ar[r] \ar[d] & F_n^\triangleleft Y_n \ar[d] \\
\sk^{n-1}(X \ra Y) \ar[r] & \sk^n(X \ra Y) } \]
\end{lem}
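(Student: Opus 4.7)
The plan is to show the square is a pushout by evaluating at each spectrum level $m$, using that colimits of parametrized orthogonal spectra are computed levelwise and that $(F_n^\triangleleft K)_m = K \barsmash_{O(n)} \mathscr J(n,m)$.

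First I would describe the four arrows. The top horizontal is $F_n^\triangleleft$ applied to the canonical map $X_n \cup_{L_n X} L_n Y \to Y_n$ induced by $X_n \to Y_n$ and $L_n Y \to Y_n$. The left vertical is the adjoint of the isomorphism $X_n \cup_{L_n X} L_n Y \cong (\sk^{n-1}(X \to Y))_n$ obtained by evaluating the pushout $\sk^{n-1}(X \to Y) = X \cup_{\sk^{n-1} X} \sk^{n-1} Y$ at level $n$. The right vertical is the adjoint of the isomorphism $Y_n \cong (\sk^n(X \to Y))_n$, where level $n$ collapses to $X_n \cup_{X_n} Y_n = Y_n$. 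Commutativity of the square is then just the universal property of $X_n \cup_{L_n X} L_n Y$.

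Next I would verify the pushout property levelwise. For $m < n$ the morphism space $\mathscr J(n,m)$ is a point, so both spaces on the top row are the basepoint at level $m$; the bottom row becomes $Y_m \to Y_m$ (since $(\sk^k(X \to Y))_m = Y_m$ whenever $k \geq m$), and the square is trivially a pushout. For $m = n$ both horizontal arrows are $X_n \cup_{L_n X} L_n Y \to Y_n$ with both vertical arrows identities, and the square is again trivially a pushout. For $m > n$, the essential input is the standard decomposition for any single orthogonal spectrum
\[ (\sk^n Z)_m \cong (\sk^{n-1} Z)_m \cup_{L_n Z \barsmash_{O(n)} \mathscr J(n,m)} (Z_n \barsmash_{O(n)} \mathscr J(n,m)), \]
which is just an unpacking of the coend formula for the enriched left Kan extension along $\mathscr J_{\leq n} \hookrightarrow \mathscr J$. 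Applying this to $Z = X$ and $Z = Y$, substituting into the defining pushouts of $\sk^{n-1}(X \to Y)$ and $\sk^n(X \to Y)$ at level $m$, and using that the functors $(-) \barsmash_{O(n)} \mathscr J(n,m)$ and $F_n^\triangleleft$ preserve pushouts, both the source and target of the proposed pushout square at level $m$ become iterated colimits of the same seven-vertex diagram with vertices $X_m$, $(\sk^{n-1} X)_m$, $(\sk^{n-1} Y)_m$, and $K \barsmash_{O(n)} \mathscr J(n,m)$ for $K \in \{L_n X,\, L_n Y,\, X_n,\, Y_n\}$. Associativity of colimits identifies them.

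The only nontrivial step is the $m > n$ bookkeeping, where one has to check that two iterated pushouts compute the same colimit of a small diagram. No idea beyond diagram chasing is required, but some care is needed to lay the rearrangement out cleanly; the rest is essentially formal, given the adjoint characterization of $F_n^\triangleleft$ and the pushout decomposition of the skeletal filtration.
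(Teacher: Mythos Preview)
Your levelwise computation is correct: the identifications of the left and right verticals via the adjunction for $F_n^\triangleleft$ are right, the three cases $m<n$, $m=n$, $m>n$ go through as you describe, and the key input for $m>n$ is indeed the standard pushout decomposition of $(\sk^n Z)_m$ coming from the coend formula for left Kan extension.

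The paper, however, does not compute levelwise at all; its entire proof is the phrase ``just compare the universal properties.'' Concretely, a map from $\sk^n(X\to Y) = X \cup_{\sk^n X} \sk^n Y$ into any spectrum $Z$ unwinds (using the adjunction defining $\sk^n$) to a map $X\to Z$ together with a compatible family of $O(k)$-maps $Y_k\to Z_k$ for $k\leq n$, subject to the evident agreement conditions. A map from the asserted pushout into $Z$ unwinds to the same data with the $k=n$ piece split off: a map $\sk^{n-1}(X\to Y)\to Z$, an $O(n)$-map $Y_n\to Z_n$, and agreement on $X_n\cup_{L_nX}L_nY$. One checks by inspection that these two descriptions coincide, and Yoneda finishes. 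This is faster and avoids the $m>n$ colimit bookkeeping entirely. Your approach has the merit of being more concrete and of making the levelwise structure explicit, which can be useful elsewhere; but for the purpose of proving the lemma it does more work than necessary.
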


\begin{proof}
Just compare the universal properties.
\end{proof}

\begin{cor}
The class of orthogonal Reedy cofibrations is the smallest class of maps that is closed under retracts, pushouts, and transfinite compositions, and containing $F_n^\triangleleft K \rightarrow F_n^\triangleleft L$ for any $O(n)$-equivariant $f$-cofibration $K \rightarrow L$.
\end{cor}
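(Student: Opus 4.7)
The plan is to prove the two inclusions separately. One direction is essentially immediate from what has already been set up: every generator $F_n^\triangleleft K \to F_n^\triangleleft L$ is an orthogonal Reedy cofibration by the preceding lemma, and the orthogonal Reedy cofibrations are manifestly closed under retracts, pushouts, and transfinite compositions (this was already noted right after the definition, and it reduces to the corresponding closure properties of $O(n)$-equivariant $f$-cofibrations of retractive spaces, applied levelwise in the latching diagram).

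For the substantive direction, I would show that an arbitrary orthogonal Reedy cofibration $X \to Y$ is a transfinite composition of pushouts of generators. The tool is the skeletal filtration $\sk^{n-1}(X \to Y) \to \sk^n(X \to Y)$ together with Lemma~\ref{easy_pushout}, which exhibits each filtration stage as a pushout
\[ \xymatrix{
F_n^\triangleleft (X_n \cup_{L_n X} L_n Y) \ar[r] \ar[d] & F_n^\triangleleft Y_n \ar[d] \\
\sk^{n-1}(X \ra Y) \ar[r] & \sk^n(X \ra Y). } \]
By the very definition of an orthogonal Reedy cofibration, the map $X_n \cup_{L_n X} L_n Y \to Y_n$ is an $O(n)$-equivariant $f$-cofibration of retractive spaces over $B$, so the left-hand vertical map above is a generator. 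Consequently each filtration stage is a pushout of a generator. Since $Y$ is the colimit (transfinite composition) of its skeleta over $X$, and $X = \sk^{-1}(X \to Y)$, the entire map $X \to Y$ lies in the saturation of the generators under pushout and transfinite composition. Retracts are not even needed for this half.

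The main obstacle, and the only point requiring care, is verifying that Lemma~\ref{easy_pushout} really does exhibit the nontrivial content of the latching condition, i.e.\ that the generator appearing in the pushout square is precisely the one controlled by the Reedy condition on $X \to Y$. This is purely formal from the universal property of $F_n^\triangleleft$ (adjoint to evaluation at level $n$) combined with the fact that the latching object of a pushout is the pushout of the latching objects when one leg is a skeletal inclusion; both of these are already established. Once this is in hand, the argument assembles cleanly, and the corollary follows.
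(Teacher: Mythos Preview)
Your proposal is correct and is exactly the argument the paper has in mind; the paper states the result as a corollary with no proof, relying on the reader to combine the closure properties noted after the definition, the preceding lemma on semi-free maps, and Lemma~\ref{easy_pushout} in precisely the way you describe.
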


\begin{cor}
If $X \rightarrow Y$ is an orthogonal Reedy cofibration, then %it is also a Reedy cofibration in the sense of the previous section, and 
each map $X_n \rightarrow Y_n$ is an $O(n)$-equivariant $f$-cofibration.
\end{cor}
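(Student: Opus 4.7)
The plan is to exploit the structural characterization of orthogonal Reedy cofibrations furnished by the preceding corollary, which identifies them as the smallest class of maps closed under retracts, pushouts, and transfinite compositions and containing the semi-free generators $F_n^\triangleleft K \ra F_n^\triangleleft L$ associated to $O(n)$-equivariant $f$-cofibrations $K \ra L$. Given such a characterization, the natural approach is to fix a spectrum level $m$ and verify that the class of maps whose level-$m$ component is an $O(m)$-equivariant $f$-cofibration contains the generators and is closed under the same three operations; then every orthogonal Reedy cofibration lies in it.

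More precisely, let $\mc C_m$ denote the class of maps $X \ra Y$ of parametrized orthogonal spectra such that $X_m \ra Y_m$ is an $O(m)$-equivariant $f$-cofibration of retractive spaces over $B$. Containment of the generators in $\mc C_m$ is exactly the second half of the earlier lemma, which identifies the level-$m$ component of $F_n^\triangleleft K \ra F_n^\triangleleft L$ with $K \barsmash_{O(n)} \mathscr J(n,m) \ra L \barsmash_{O(n)} \mathscr J(n,m)$ and asserts that this is an $O(m)$-equivariant $f$-cofibration. Closure of $\mc C_m$ under retracts, pushouts, and transfinite compositions is a standard formal property of the fiberwise homotopy extension property in the presence of a fixed equivariance group: the $O(m)$-equivariant fiberwise retractions of mapping cylinders witnessing the FHEP glue together along pushouts, pass through filtered colimits termwise, and restrict along retracts. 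With these two items in hand, the preceding corollary forces every orthogonal Reedy cofibration to lie in $\mc C_m$; letting $m$ vary gives the result.

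The only real work is verifying the closure properties in the second step, but these are routine. An $O(m)$-equivariant $f$-cofibration $A \ra Y$ over $B$ is witnessed by a fiberwise $O(m)$-equivariant retraction $Y \times I \ra A \times I \cup Y \times \{0\}$; any pushout $A' \cup_A Y$ inherits such a retraction by combining the one on the $Y$ factor with the identity on the $A'$ factor, transfinite compositions are handled by assembling retractions termwise and passing to the colimit, and retracts are handled by precomposition and postcomposition with the retract data. None of this requires any machinery beyond what has already been set up, so the argument is essentially formal once the two key inputs (the generating lemma and the preceding corollary) are in place.
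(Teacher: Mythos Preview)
Your argument is correct and is precisely the proof the paper has in mind: the corollary is stated without proof because it follows immediately from the preceding lemma (generators land in $\mc C_m$) and the preceding corollary (orthogonal Reedy cofibrations are generated under retracts, pushouts, and transfinite compositions), together with the routine closure properties of $O(m)$-equivariant $f$-cofibrations that you spell out.
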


This guarantees that we can take the strict cofiber of $X \rightarrow Y$ and it will have the correct stable homotopy type. It also ensures that pushouts along orthogonal cofibrations will behave the way we expect, allowing for $A(B)$ to be a functor.

Our main technical result for these cofibrations is that they satisfy a pushout-product axiom. This appears to be a new result even in the non-fiberwise case of $B = *$. It suggests that it is possible to further loosen the notion of a ``flat spectrum'' as in \cite{schwede2012symmetric} while preserving the fact that the smash product of two flat spectra preserves weak equivalences.

\begin{prop}\label{pushout_product}
The pushout-product of two orthogonal cofibrations, $K \rightarrow X$ over $A$ and $L \rightarrow Y$ over $B$, is an orthogonal cofibration over $A \times B$. If $X$ is cofibrant and $Y \rightarrow Y'$ is a weak equivalence of cofibrant spectra then $X \barsmash Y \rightarrow X \barsmash Y'$ is a weak equivalence.
\end{prop}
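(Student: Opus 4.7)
The plan is to reduce both statements to properties of the generating cofibrations $F_n^\triangleleft K \to F_n^\triangleleft L$ identified in the preceding corollary, and then invoke the pushout-product behavior of equivariant $f$-cofibrations of retractive spaces.

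For the first statement, the pushout-product distributes over pushouts, retracts, and transfinite compositions in each variable, so it is enough to check the axiom on pairs of generators. Unwinding the definitions of $F_n^\triangleleft$ and the Day-convolution definition of $\barsmash$, one obtains a natural isomorphism
\[ F_n^\triangleleft K \barsmash F_m^\triangleleft K' \;\cong\; F_{n+m}^\triangleleft\bigl( O(n+m)_+ \sma_{O(n) \times O(m)} (K \barsmash K')\bigr) \]
over $A \times B$. Consequently the pushout-product of two semi-free generators becomes $F_{n+m}^\triangleleft$ applied to the induction $O(n+m)_+ \sma_{O(n) \times O(m)} -$ of the pushout-product of $K \to L$ and $K' \to L'$ taken in the categories of $O(n)$- and $O(m)$-equivariant retractive spaces. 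I then verify two auxiliary facts: (a) the pushout-product of an $O(n)$-equivariant and an $O(m)$-equivariant $f$-cofibration is an $O(n) \times O(m)$-equivariant $f$-cofibration over $A \times B$, and (b) the induction functor $O(n+m)_+ \sma_{O(n) \times O(m)} -$ carries $O(n) \times O(m)$-equivariant $f$-cofibrations to $O(n+m)$-equivariant $f$-cofibrations. Fact (a) is a Str{\o}m-style diagonal retraction argument; equivariance survives because the $O(n)$- and $O(m)$-actions act on separate tensor factors and therefore commute. Fact (b) follows because $O(n+m)/(O(n) \times O(m))$ is a compact smooth manifold.

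For the second statement, I will combine the strong symmetric monoidality of $P$, which gives natural isomorphisms $P(X \barsmash Y) \cong PX \barsmash PY$ and likewise for $Y'$, with cell induction on $X$. Since weak equivalences in $\mc R(B;R)$ are detected by applying $P$ and then restricting to fibers to obtain stable equivalences of ordinary orthogonal spectra, it suffices to show that for each $a \in A$ the fiber $(PX)_a$ is a flat orthogonal spectrum, meaning that smashing with it preserves stable equivalences. Filtering $X$ along the skeletal filtration via Lemma \ref{easy_pushout}, the gluing lemma reduces this to the case $X = F_n^\triangleleft K$ with $K$ an $O(n)$-equivariant $f$-cofibrant retractive space over $A$. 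Restriction to the fiber over $a$ produces a semi-free orthogonal spectrum $F_n^\triangleleft K_a$ whose $n$th space carries a cofibrant $O(n)$-action, and this is flat by the standard criterion for symmetric and orthogonal spectra.

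The main obstacle is the equivariance bookkeeping in (a): while the nonequivariant pushout-product axiom for $f$-cofibrations is classical Str{\o}m theory, one must check that the retractions witnessing the two input $f$-cofibrations can be chosen so that they simultaneously respect fibers and both group actions, and that they assemble to a single fiberwise $O(n) \times O(m)$-equivariant retraction on the pushout-product. The cleanest justification is that the two actions live on disjoint tensor factors and so commute automatically. A secondary technical point is making sure that the skeletal cell induction in part (2) is compatible with applying $P$ and then passing to fibers; this uses exactly the $f$-cofibrancy built into the generating cells, ensuring that the strict pushouts computing $X$ remain homotopy pushouts after both functors.
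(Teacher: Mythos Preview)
Your argument for the first statement matches the paper's: reduce to pushout-products of generating semi-free maps, use the identification $F_m^\triangleleft K \barsmash F_n^\triangleleft K' \cong F_{m+n}^\triangleleft(O(m+n)_+ \sma_{O(m)\times O(n)} K \barsmash K')$, and invoke Steenrod's NDR formula for the equivariant fiberwise pushout-product (your fact (a)). Your fact (b) is a reasonable elaboration the paper leaves implicit; it follows more directly from the observation that induction commutes with $-\times I$ and hence preserves mapping-cylinder retractions than from any smoothness of the quotient.

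The second statement, however, has a genuine gap. Your endgame is that the fiber $(PX)_a$ reduces to a semi-free spectrum $F_n^\triangleleft K_a$ ``whose $n$th space carries a cofibrant $O(n)$-action, and this is flat by the standard criterion.'' But the only cofibrancy the hypotheses yield is that $K_a$ is $O(n)$-equivariantly \emph{well-based} (an equivariant Hurewicz cofibration at the basepoint), not that it is $O(n)$-cofibrant in any projective or cell-complex sense. There is no standard flatness criterion for semi-free orthogonal spectra on such $K_a$; the paper explicitly flags this as a new result even in the nonparametrized case $B=*$. The paper's actual argument for this step is the substantive content: it rewrites $F_n^\triangleleft K \barsmash Y$ in terms of a negative shift $(\textup{sh}_{-n} Y)_m = Y_{m-n} \barsmash_{O(m-n)} O(m)_+$, then runs a second skeletal induction on $Y$ together with a connectivity estimate to show $Y \to \textup{sh}_{-n} Y$ is a stable equivalence of prespectra. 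Your appeal to a standard criterion is therefore circular---it assumes exactly what the proposition is establishing. A secondary unaddressed point is that you need the fiber of the Day-convolution smash $PX \barsmash PY$ over $(a,b)$ to be $(PX)_a \sma (PY)_b$; this is not automatic, since taking a fiber is a limit and the Day convolution is a colimit.
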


\begin{proof}
First we show that a pushout-product of cofibrations is a cofibration.
Since our cofibrations are generated by maps of the form $F_n^\triangleleft K \rightarrow F_n^\triangleleft L$, it suffices to take a pushout-product of two such maps.
We observe that the product of two semi-free diagrams over $\mathscr J$ gives a semi-free diagram over $\mathscr J \times \mathscr J$, and therefore
\[ F_m^\triangleleft X \barsmash F_n^\triangleleft Y \cong F_{m+n}^\triangleleft (O(m+n)_+ \barsmash_{O(m) \times O(n)} X \barsmash Y) \]
This construction sends pushout-products of spaces to pushout-products of spectra, so it suffices to show that if $K \rightarrow X$ is an $O(m)$-equivariant $f$-cofibration over $A$ and $L \rightarrow Y$ is an $O(n)$-equivariant $f$-cofibration over $B$, the pushout-product is an $O(m) \times O(n)$-cofibration over $A \times B$.
The retraction we want is the one given by the usual formula for showing that a pushout-product of NDR-pairs is an NDR-pair, see\cite{steenrod1967convenient}, Thm 6.3.
It is easy to check that this formula preserves fiberwise and equivariant maps, so we are done.

Next we check that smashing with a cofibrant spectrum $X$ preserves weak equivalences between cofibrant spectra $Y \rightarrow Y'$.
It suffices to prove this inductively for $\sk^n X$. If $Y$ is cofibrant, the map $(\sk^{n-1} X) \barsmash Y \rightarrow (\sk^{n} X) \barsmash Y$ is a pushout-product of two cofibrations, so it is a cofibration and its strict cofiber is a homotopy cofiber. Therefore we only have to examine effect of smashing with the cofiber of $\sk^{n-1} X \rightarrow \sk^n X$.

So without loss of generality, $X$ is a semi-free spectrum $F_n^\triangleleft K$, and we need to show that
\[ (F_n^\triangleleft K \sma Y)_m = (K \barsmash Y_{m-n}) \barsmash_{O(n) \times O(m-n)} O(m)_+ \]
preserves stable equivalences in the $Y$ variable when $Y$ is cofibrant.
Each level of this construction is a smash product over $O(n) \times O(m-n)$ with the free $O(n) \times O(m-n)$-CW complex $O(m)$, so it preserves levelwise weak equivalences.
So, without loss of generality we can assume that $K$ is a free $O(n)$-CW complex, the levels $Y_{m-n}$ are $O(m-n)$-CW complexes, and all quotients by a group action are homotopy quotients.

The above spectrum may be rewritten as the smash product of a parametrized prespectrum $\textup{sh}_{-n} Y$ and the retractive space $K$ over $O(n)$:
\[ \textup{sh}_{-n} Y \barsmash_{O(n)} K \]
\[ (\textup{sh}_{-n} Y)_m = Y_{m-n} \underset{O(m-n)}\barsmash O(m)_+ \]
We choose to think of this as a prespectrum with a free $O(n)$-action.
If $\textup{sh}_{-n}$ preserves stable equivalences, then it is easy to check that tensoring over $O(n)$ with a free $O(n)$-CW complex $K$ preserves stable equivalences, so our construction will preserve all weak equivalences in the $Y$ variable.

We are reduced to showing that $\textup{sh}_{-n}$ sends weak equivalences of cofibrant orthogonal spectra to weak equivalences of prespectra.
We will prove that the map of parametrized prespectra given at level $m$ by
\[ Y_{m-n} \ra Y_{m-n} \underset{O(m-n)}\barsmash O(m)_+ \]
is a stable equivalence.
We filter $Y$ by its skeleta $\sk^k Y$ and observe that, as before, we only need to prove this is true on the cofiber of $\sk^{k-1} Y \rightarrow \sk^k Y$, which is a semi-free spectrum $F_k^\triangleleft L$ on a retractive space $L$ with an $O(k)$-action.
The space
\[ (F_k^\triangleleft L)_{m-n} = (S^{m-n-k} \barsmash L) \barsmash_{O(m-n-k) \times O(k)} O(m-n)_+ \]
has connectivity $m-n-k-1$ and so the map
\[ (F_k^\triangleleft L)_{m-n} \ra (F_k^\triangleleft L)_{m-n} \underset{O(m-n)}\barsmash O(m)_+ \]
has connectivity $2(m-n)-k-1$. This connectivity increases faster than $m$, so this map is an equivalence of prespectra. This finishes the induction up the skeleta of $Y$, so $\textup{sh}_{-n}$ does indeed preserve stable equivalences.
\end{proof}

\begin{cor}
If $R$ is orthogonal cofibrant then $E(X) = EG \times_G X$ sends every cofibration of $R[G]$-module spectra to an orthogonal cofibration of parametrized $R$-module spectra.
\end{cor}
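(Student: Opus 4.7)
The plan is to reduce the statement to a claim about generating cofibrations of the model category of $R[G]$-modules, and then apply the pushout-product property of Proposition \ref{pushout_product} twice. The class of orthogonal Reedy cofibrations of parametrized $R$-module spectra is closed under retracts, pushouts, and transfinite compositions (by the first corollary after Lemma \ref{easy_pushout}, since $R$-linearity is preserved by these colimits), and $E = EG \times_G (-)$ preserves all colimits. Since every cofibration of $R[G]$-modules is a retract of a transfinite composition of pushouts of generating cofibrations of the form $R[G] \sma F_n^\triangleleft i$ (where $i : (S^{k-1})_+ \hookrightarrow D^k_+$ is a generating cofibration of based spaces), it will suffice to show $E(R[G] \sma F_n^\triangleleft i)$ is an orthogonal Reedy cofibration.

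The key identification is as follows. Writing $R[G] = R \sma \Sph[G]$ with $\Sph[G] = \Sph \sma G_+$, the $G$-action on $R[G] \sma F_n^\triangleleft K_+$ is concentrated on the $G_+$-factor and commutes past smashing with $R$ and with $F_n^\triangleleft K_+$, each of which carries trivial $G$-action. A direct level-wise unwinding of the Borel construction then yields a natural isomorphism of parametrized $R$-module spectra over $BG$,
\[
E(R[G] \sma F_n^\triangleleft K_+) \;\cong\; E(\Sph[G]) \barsmash (R \sma F_n^\triangleleft K_+),
\]
where $E(\Sph[G])$ is the parametrized suspension spectrum over $BG$ of the retractive space $EG \times_G G_+ \cong EG \sqcup BG$. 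Under this identification, $E$ of a generating cofibration becomes $E(\Sph[G]) \barsmash (R \sma F_n^\triangleleft i)$.

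It then remains to check that the two factors meet the hypotheses of Proposition \ref{pushout_product}. For $E(\Sph[G])$: the section $BG \hookrightarrow EG \sqcup BG$ is the inclusion of a summand, hence an $f$-cofibration over $BG$, and since $E(\Sph[G])$ is the parametrized semi-free spectrum $F_0^\triangleleft(EG \sqcup BG)$, the semi-free cofibration lemma preceding Lemma \ref{easy_pushout} shows that $E(\Sph[G])$ is orthogonal Reedy cofibrant. For $R \sma F_n^\triangleleft i$: the same lemma gives that $F_n^\triangleleft i$ is an orthogonal cofibration, and since $R$ is orthogonal cofibrant, a first application of Proposition \ref{pushout_product} to the pushout-product of $(* \to R)$ with $F_n^\triangleleft i$ shows that $R \sma F_n^\triangleleft i$ is an orthogonal cofibration of non-parametrized spectra. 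A second application of Proposition \ref{pushout_product}, now to the pushout-product of $(* \to E(\Sph[G]))$ with $R \sma F_n^\triangleleft i$, produces exactly $E(\Sph[G]) \barsmash (R \sma F_n^\triangleleft i)$ as an orthogonal Reedy cofibration over $BG$. I expect the main obstacle to be the middle identification, where $E$ must be commuted past the smash factors with trivial $G$-action while carefully tracking the parametrized $R$-module structure; once that bookkeeping is handled, the corollary is a formal consequence of the pushout-product property already established.
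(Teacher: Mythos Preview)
Your approach is essentially identical to the paper's: reduce to generating cells, identify $E$ of a generating cell as $(EG \amalg BG) \barsmash R \sma F_k(S^{n-1}_+ \to D^n_+)$, and invoke the pushout-product proposition (your explicit two-step application of it is exactly what the paper's one-line ``external smash product of a free cell with a cofibrant parametrized spectrum'' unpacks to). One small correction: the generating cofibrations of the $R[G]$-module model structure use the \emph{free} functor $F_k$ (equivalently $F_k^\triangleleft(O(k)_+ \sma -)$), not $F_k^\triangleleft$ applied to spaces with trivial $O(k)$-action as you wrote; with $F_k$ in place of $F_k^\triangleleft$ your argument goes through verbatim, since $F_k i$ is still an orthogonal cofibration by the same lemma.
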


\begin{proof}
A free cell of orthogonal $R[G]$-module spectra
\[ R[G] \sma F_k S^{n-1}_+ \ra R[G] \sma F_k D^n_+ \]
is sent under $E$ to
\[ (EG \amalg BG) \barsmash R \sma F_k S^{n-1}_+ \ra (EG \amalg BG) \barsmash R \sma F_k D^n_+ \]
and this is the external smash product of a free cell of orthogonal spectra with a cofibrant parametrized orthogonal spectrum.
\end{proof}

With these lemmas, it is straightforward to verify that the categories $\mc R(B;R)$ and $\mc E(B;R)$ from the previous section have all of the same properties if we take instead the orthogonal cofibrant objects and the orthogonal cofibrations between them.
The least obvious property is the fact that $f_!$ preserves weak equivalences, but $f_!$ already preserves level equivalences between spectra whose levels are $f$-cofibrant, so we may replace our orthogonal cofibrant spectra with cofibrant prespectra and then use the same proof.

Since we are changing the definitions, we will also take this opportunity to make a standard technical modification so that the pairings below are strictly associative.
We allow the Waldhausen categories $\mc R(B;R)$ and $\mc E(B;R)$ to also include objects consisting of
\begin{itemize}
\item a $k$-tuple of ring spectra $R_1, \ldots, R_k$,
\item an isomorphism of rings $R_1 \sma \ldots \sma R_k \cong R$,
\item a $k$-tuple of parametrized modules $M_1, \ldots, M_k$ over the spaces $B_1, \ldots, B_k$,
\item and a homeomorphism $B_1 \times \ldots \times B_k \cong B$.
\end{itemize}
In practice, all but one of these rings will be the sphere spectrum, and all but one of these spaces will be $*$.
For the purpose of defining the morphisms, we treat the above object as if it were the external smash product $M_1 \barsmash \ldots \barsmash M_k$, which is an $R$-module over $B$.
Clearly these new categories are equivalent to the old ones and so they give homotopy equivalent $K$-theory.
%We may even change $P$ up to isomorphism, by applying it to each of the $M_i$ separately.

Now we are ready to define our pairings.

\begin{df}
A pairing of Waldhausen categories, or biexact functor, is a bifunctor $F: C \times D \rightarrow E$ that is exact in each variable separately, such that for every choice of cofibration $a \rightarrow b$ in $C$ and cofibration $x \rightarrow y$ in $D$, in the square of cofibrations
\[ \xymatrix{
F(a,x) \ar[r] \ar[d] & F(b,x) \ar[d] \\
F(a,y) \ar[r] & F(b,y) } \]
the map $F(a,y) \cup_{F(a,x)} F(b,x) \rightarrow F(b,y)$ is also a cofibration.
\end{df}

The following is a consequence of \cite{blumberg2011derived}, Thm 2.6:
\begin{prop}
A pairing of Waldhausen categories $C_1 \times C_2 \rightarrow D$ induces a map of spectra $K(C_1) \sma K(C_2) \rightarrow K(D)$ in a natural way. Given four pairings making this diagram of functors commute
\[ \xymatrix{
C_1 \times C_2 \times C_3 \ar[r] \ar[d] & D_1 \times C_3 \ar[d] \\
C_1 \times D_2 \ar[r] & E } \]
the two induced maps $K(C_1) \sma K(C_2) \sma K(C_3) \rightarrow K(E)$ are identical.
\end{prop}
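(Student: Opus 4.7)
The plan is to build the pairing directly from the iterated $\mc S_\cdot$ construction and then appeal to the cited theorem for the homotopy-coherent comparison. Given a biexact $F: C_1 \times C_2 \rightarrow D$, I would first define a bisimplicial bifunctor $F_{p,q}: \mc S_p C_1 \times \mc S_q C_2 \rightarrow \mc S_p^{(2)} D$ that sends a pair of flags $(A_{0,\bullet}, B_{0,\bullet})$ to the two-dimensional grid $\{F(A_{0,i}, B_{0,j})\}$, with maps given by applying $F$ levelwise. The pushout-product axiom in the definition of biexact functor is precisely what guarantees that the horizontal and vertical maps in this grid are cofibrations and that the squares are pushouts, i.e.\ that the grid is a genuine object of $\mc S_p^{(2)} D$.

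Applying $w_\cdot$ and geometric realization yields a map $|w_\cdot \mc S_\cdot C_1| \times |w_\cdot \mc S_\cdot C_2| \rightarrow |w_\cdot \mc S_\cdot^{(2)} D|$. Since any grid with a zero entry in either coordinate consists entirely of zero objects, this map sends the wedge to the basepoint and so factors through the smash product. Iterating the same construction with $\mc S_\cdot^{(m)}$ and $\mc S_\cdot^{(n)}$ in place of $\mc S_\cdot$ gives a compatible family of pairings at every spectrum bi-level, which using Thm \ref{alternate_delooping} can be reassembled into a single map of spectra $K(C_1) \sma K(C_2) \rightarrow K(D)$; naturality in $F$ is built in. For the associativity assertion, both composite pairings arise from the same triexact functor $G: C_1 \times C_2 \times C_3 \rightarrow E$ obtained either way around the square (which agree on the nose by the commutativity hypothesis), and the induced map factors through the triply iterated $\mc S_\cdot^{(3)} E$. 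The two candidate maps on the smash product correspond to two orders of realizing this same tri-simplicial object, and hence agree.

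The main obstacle is not constructing the pairing but verifying the homotopy coherence: one must check that the permutations of $\mc S_\cdot$ indices are compatible with the spectrum structure maps on both sides, and that the smash product of the spectra produced by our chosen delooping machine really is computed by the multi-simplicial construction above. This is exactly the content of \cite{blumberg2011derived}, Thm 2.6, so the role of the proof reduces to checking that our biexact functors satisfy the input hypotheses of that theorem. The definition of biexact functor given here is the standard one, so this verification is immediate, and the proposition follows.
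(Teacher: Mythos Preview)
Your proposal is correct and matches the paper's approach: the paper does not give an independent proof but simply records the proposition as a consequence of \cite{blumberg2011derived}, Thm~2.6, exactly as you do in your final paragraph. The sketch you add of the multisimplicial pairing is accurate and is essentially the construction underlying that reference, though note a small imprecision: the pushout-product condition is what ensures the maps from the pushouts in the grid are cofibrations (i.e.\ that the grid lies in $\mc S_p \mc S_q D$ as a diagram of cofibration squares), while the pushout property of the squares themselves comes from exactness in each variable separately.
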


\begin{prop}\label{pairings}
If $B$ and $B'$ are unbased spaces, $R$ and $S$ ring spectra, then there are pairings of symmetric spectra
\[ \xymatrix @R=0.4em { A(B;R) \sma A(B';S) \ar[r] & A(B \times B'; R \sma S) \\
 \uda(B;R) \sma \uda(B';S) \ar[r] & \uda(B \times B'; R \sma S) } \]
which are natural with respect to all pairs of maps of unbased spaces and pairs of maps of rings. They have the obvious associativity relation in the case of three base spaces and three rings. They commute up to homotopy with the Cartan map when it is defined.
\end{prop}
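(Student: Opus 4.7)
The plan is to define the pairing at the level of Waldhausen categories by the external smash product $(M,N) \mapsto M \barsmash N$, verify it is biexact, check it respects the relevant finiteness conditions, and then invoke the preceding proposition (from \cite{blumberg2011derived}) to obtain the induced map of $K$-theory spectra. First I will set up the functor: given parametrized orthogonal $R$-module $M$ over $B$ and $S$-module $N$ over $B'$, the external smash $M \barsmash N$ is canonically a parametrized $(R \sma S)$-module over $B \times B'$, and this is where the strict-associativity hack at the end of Section \ref{pairing_section} (carrying around factorizations $R_1 \sma \cdots \sma R_k \cong R$ and $B_1 \times \cdots \times B_k \cong B$) pays off: it lets the pairing be strictly associative on the nose.

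Next I would verify biexactness. Exactness in each variable separately, along with the pushout-product condition on pairs of cofibrations, is exactly the content of Proposition \ref{pushout_product}: the pushout-product of two orthogonal Reedy cofibrations is again an orthogonal Reedy cofibration, and smashing with a cofibrant spectrum preserves weak equivalences between cofibrant spectra. For the $\mc E$-version I additionally need that $M \barsmash N$ is a Hurewicz fibration over $B \times B'$ at each spectrum level when $M$ and $N$ are levelwise ex-fibrant; this is a standard fact for external smash of ex-fibrations, proved by a pushout-product argument with the fiberwise HEP.

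Then I need to check the finiteness conditions. For $A(B;R)$: the free cell $R[G] \sma F_k D^n_+$ over $BG$ external-smashed with the corresponding free cell over $BH$ produces a free cell of $(R \sma S)[G \times H]$-modules over $B(G\times H)$, so the class $\mc R_f$ is closed under $\barsmash$ by a thick-subcategory argument. For $\uda(B;R)$: the fiber of $M \barsmash N$ over $(b,b') \in B \times B'$ is, after applying the fibrant replacement $P$, the ordinary smash product of the fibers, and a smash product of two finitely dominated module spectra is finitely dominated; this uses $P(M \barsmash N) \cong PM \barsmash PN$ which follows from the strong symmetric monoidality of $P$ established earlier (extended levelwise from spaces to spectra). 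Compatibility with the symmetric spectrum structure, coming via the bispectrum construction of Section 3, is automatic from functoriality of $\barsmash$ along $\Sigma_S \times \Sigma_S \to \Sigma_S$, since the delooping machines in Section 3 distribute over the pairing.

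Associativity for three base spaces and three rings reduces to associativity of $\barsmash$ for parametrized orthogonal spectra, which is a Day-convolution computation along $\mathscr J \times \mathscr J \times \mathscr J \to \mathscr J$; this, together with naturality of all the maps in $f_!$ (for $A$) and $f^*$ (for $\uda$) and in ring maps, gives the claimed naturality and associativity. Finally, compatibility with Cartan: the Cartan map is induced by $I \circ P$, and since $P$ and the inclusion $I$ both commute with $\barsmash$ up to canonical natural isomorphism, the two biexact functors $P(M) \barsmash P(N)$ and $P(M \barsmash N)$ agree, so the induced square on $K$-theory commutes up to homotopy. The main obstacle is ensuring that the strict cofibration framework (orthogonal Reedy cofibrations of Section \ref{pairing_section}) genuinely satisfies the pushout-product axiom, but that is precisely what Proposition \ref{pushout_product} was designed to deliver; everything else is formal bookkeeping.
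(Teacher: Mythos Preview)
Your approach matches the paper's: external smash, biexactness from Proposition \ref{pushout_product}, strict associativity via the tuple trick, finiteness by thick-subcategory arguments, and Cartan compatibility via $P$. The one genuinely nontrivial step you gloss over is that the external smash of two level-fibrant parametrized spectra is again level-fibrant, so that the pairing on $\mc E$ actually lands in $\mc E$. Your ``pushout-product argument with the fiberwise HEP'' does not accomplish this: the pushout-product axiom controls cofibrations, not fibrations, and each level $(M\barsmash N)_n$ is a Day-convolution colimit rather than a single space-level external smash. The paper's argument is a skeletal induction: reduce to $M = F_n^\triangleleft K$ semi-free on an ex-fibration $K$, so that level $m$ is $(K \barsmash N_{m-n}) \barsmash_{O(n)\times O(m-n)} O(m)_+$, and then induct up the free $O(n)\times O(m-n)$-cells of $O(m)$, using that ex-fibrations are preserved under pushouts and sequential colimits along $f$-cofibrations (\cite{ms}, 8.2.1).

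A smaller point: the isomorphism $P(M\barsmash N) \cong PM \barsmash PN$ holds at the space level but not on the nose at the spectrum level, since $P$ need not commute with the Day-convolution colimit; the paper notes explicitly that the pairing and $P$ commute only up to equivalence. This does not harm the Cartan-compatibility claim (a natural weak equivalence suffices for the square on $K$-theory to commute up to homotopy), but your $\mc E^f$ finiteness check should not route through $P$ at all: once level-fibrancy of $M\barsmash N$ is established as above, the fiber over $(b,b')$ is honest and is simply the smash of the fibers. The aside about compatibility with the bispectrum delooping of Section 3 is also unnecessary---the map of $K$-theory spectra comes directly from the cited Blumberg--Mandell result on biexact functors.
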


\begin{proof}
We send a parametrized $R$-module $M$ over $B$ and an $S$-module $N$ over $B'$ to their external smash product $M \barsmash N$ as defined above. By Prop \ref{pushout_product} this defines biexact functors
\[ \xymatrix @R=0.3em { \mc R(B;R) \times \mc R(B';S) \ar[r] & \mc R(B \times B'; R \sma S) \\
\mc E(B;R) \times \mc E(B';S) \ar[r] & \mc R(B \times B'; R \sma S) } \]
We have to check that the second pairing actually lands in $\mc E$; in other words, a smash product of level-fibrant spectra is level-fibrant.
Recall that ex-fibrations are preserved under pushouts along $f$-cofibrations and colimits along $f$-cofibrations (8.2.1 in \cite{ms}).
Therefore on the space level, an external smash product of ex-fibrations is an ex-fibration.
If $X$ and $Y$ are level-fibrant spectra over $A$ and $B$, respectively, we check that $X \barsmash Y$ is level-fibrant by showing inductively that $(\sk^n X) \barsmash Y$ is level-fibrant.
By the pushout square of Lemma \ref{easy_pushout}, we may assume that $X$ is semi-free on an ex-fibration $K$ over $A$ with a fiberwise $O(n)$-action.
We are reduced to showing that the space
\[ (F_n^\triangleleft K \sma Y)_m = (K \barsmash Y_{m-n}) \barsmash_{O(n) \times O(m-n)} O(m)_+ \]
is an ex-fibration.
Since $K \barsmash Y_{m-n}$ is already an ex-fibration, this is an easy induction up the free $O(n) \times O(m-n)$-cells of $O(m)$.

To deal with associativity, we modify the above functor up to natural isomorphism (which does not change biexactness).
We instead take a $k$-tuple of modules and an $l$-tuple of modules to the obvious $(k+l)$-tuple, and do not actually smash any of them together.
Now our rule is strictly associative, not just associative up to natural isomorphism.

Finally we check the finiteness conditions. Biexactness means that cofiber sequences in each variable are sent to cofiber sequences, so if we wish to check that a pair of thick subcategories is sent to a given thick subcategory, we only have to check the generators. It's easy to see that the external smash product of spaces sends $B$ with a single cell attached and $B'$ with a single cell attached to $B \times B'$ with a single cell attached, so we get
\[ \xymatrix @R=0.4em { \mc R_f(B;R) \times \mc R_f(B';S) \ar[r] & \mc R_f(B \times B'; R \sma S) } \]
giving the pairing on $A$-theory.
Even easier, since the external smash product is on each fiber just the smash product of the fibers, we get
\[ \xymatrix @R=0.4em { \mc E^f(B;R) \times \mc E^f(B';S) \ar[r] & \mc E^f(B \times B'; R \sma S) } \]
giving the pairing on $\uda$-theory.
%Note that these pairings commute with the functor $P$ up isomorphism, but $PM \barsmash PN$ is not literally equal to $P(M \barsmash N)$, so on the level of $K$-theory they will only commute up to homotopy.
Note that these pairings do not commute with the functor $P$ defined in the last section on the nose, only up to equivalence.
\end{proof}

We remark in passing that these pairings make topological Swan theory into a ring, and $K$-theory into a module over that ring, just as in the classical case.
\begin{cor}
For any space $B$, $\uda(B)$ is a ring spectrum, and $A(B)$ and $A(B;R)$ are $\uda(B)$-modules.
\end{cor}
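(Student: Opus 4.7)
The plan is to upgrade the external pairings of Proposition \ref{pairings} to internal multiplications by pulling back along the diagonal $\Delta\colon B \to B \times B$, then verify the ring and module axioms formally.

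First, I would extend Proposition \ref{pairings} to include a mixed biexact pairing
\[ \mc E(B;R) \times \mc R(B';S) \to \mc R(B \times B'; R \sma S), \quad (M,N) \mapsto M \barsmash N, \]
whose proof is identical to the $\mc R \times \mc R$ case, using the pushout-product of Proposition \ref{pushout_product}. The finiteness conditions are preserved, yielding a pairing on $K$-theory $\uda(B;R) \sma A(B';S) \to A(B \times B'; R \sma S)$.

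For the ring structure on $\uda(B)$, compose the external pairing with pullback along the diagonal,
\[ \uda(B) \sma \uda(B) \to \uda(B \times B) \xrightarrow{\Delta^*} \uda(B), \]
which is well-defined because $\Delta^*$ is exact on $\mc E$. The unit is the pullback of $\Sph \simeq \uda(\ast)$ along $B \to \ast$. Associativity and the unit axiom follow from the associativity of external smash (Prop \ref{pairings}), the pseudofunctoriality of pullback, and the identities $(\Delta \times 1)\Delta = (1 \times \Delta)\Delta$ and $(p \times 1)\Delta = \mathrm{id}$.

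For the module structure on $A(B;R)$, the action is defined at the category level by the internal fiberwise smash product
\[ \mc E(B) \times \mc R(B;R) \to \mc R(B;R), \quad (M,N) \mapsto M \sma_B N := \Delta^*(M \barsmash N). \]
Although pullback along $\Delta$ is not exact on $\mc R$ in general, the composite is exact in $N$ for each ex-fibrant $M$: because $M \barsmash N$ is fibrant on its first coordinate over $B \times B$, pullback along $\Delta$ preserves the relevant cofibrations and stable equivalences, essentially by \cite{kieboom1987pullback}. The module axioms then follow from the associativity of external smash and the coassociativity of $\Delta$. The delicate step is precisely this verification that the internal fiberwise smash is exact in $N$ when the first factor is ex-fibrant; everything else is formal from the associativity and naturality of the pairings already established in Proposition \ref{pairings}.
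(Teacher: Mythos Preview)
The paper states this corollary without proof, treating it as an immediate consequence of the external pairings in Proposition~\ref{pairings} (``We remark in passing that these pairings make topological Swan theory into a ring, and $K$-theory into a module over that ring''). Your proposal correctly supplies the details the paper omits, and the route you take---external smash followed by restriction along the diagonal---is the natural and presumably intended one.

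You are right to flag the one genuinely non-formal step: Proposition~\ref{pairings} only records the $\mc E \times \mc E$ and $\mc R \times \mc R$ external pairings, so to obtain the $\uda(B)$-module structure on $A(B;R)$ one must produce a biexact functor $\mc E^f(B) \times \mc R_f(B;R) \to \mc R_f(B;R)$ directly. Since $A$ is covariant, one cannot simply apply $\Delta^*$ to $A(B\times B;R)$; instead the internal smash $M \sma_B N = \Delta^*(M \barsmash N)$ must be shown to be exact in $N$ when $M$ is ex-fibrant. Your sketch of why this holds (fibrancy of $M$ makes the relevant fiber products homotopically meaningful, so weak equivalences and $f$-cofibrations in $N$ are preserved) is correct, and the finiteness conditions go through by induction on the cells of $N$, using that the fiber of $M$ is finitely dominated. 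This is exactly the content the paper leaves implicit.
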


We are interested in the following special cases.
Taking one of our spaces to be the one-point space $*$, we get natural pairings
\[ \xymatrix @R=0.3em { K(R) \sma A(B) \ar[r] & K(R) \sma A(B;\Sph) \ar[r] & A(B;R) \\
 K(R) \sma \uda(B) \ar[r] & K(R) \sma \uda(B;\Sph) \ar[r] & \uda(B;R) } \]
This will allow us to reduce the proof of Theorem \ref{assembly_coassembly_norm} to the case $R = \Sph$.

\begin{rmk}
Our construction of $A(B;R)$ and $\uda(B;R)$ is not natural with respect to all maps of ring spectra, only those maps $R \rightarrow S$ for which $S$ is cofibrant as an $R$-module. One might be able to modify our definition of ``cofibration'' even further to get naturality for all maps of rings, without breaking the biexactness we proved above.
\end{rmk}

\section{Assembly and coassembly of $R$-modules.}\label{assembly}

In this section we briefly recall both the precise definitions and universal properties of assembly and coassembly maps, cf. \cite{weiss1993assembly,williams2000bivariant}.

\begin{df}
If $X_\cdot$ is a simplicial set, the \emph{category of simplices} $\Delta_{X_\cdot}$ is a small category whose objects are $\coprod_{p \geq 0} X_p$. The morphisms from $x \in X_p$ to $x' \in X_q$ consists of all injective maps of simplicial sets $\Delta[p]_\cdot \rightarrow \Delta[q]_\cdot$ making the following square commute.
\[ \xymatrix{
\Delta[p]_\cdot \ar[d] \ar[r]^-{x} & X_\cdot \ar@{=}[d] \\
\Delta[q]_\cdot \ar[r]^-{x'} & X_\cdot } \]
\end{df}

If $X$ is an unbased space, its singular simplices $S_\cdot X$ form a simplicial set.
Each object in $\Delta_{S_\cdot X}$ gives a continuous map $\Delta^p \rightarrow X$, and the morphisms between such objects are the factorizations $\Delta^p \rightarrow \Delta^q \rightarrow X$ through compositions of face maps.

We will make use of the \emph{last vertex map} $|N_\cdot \Delta_{X_\cdot}| \overset\sim\rightarrow |X_\cdot|$.
It is most easily defined in the special case where $X_\cdot$ is the nerve of a category $C$.
In that case, we define the last vertex map on the flag of face maps
\[ \Delta[p_0] \ra \Delta[p_1] \ra \ldots \ra \Delta[p_k] \ra N_\cdot C \]
by regarding them as functors
\[ [p_0] \ra [p_1] \ra \ldots \ra [p_k] \ra C \]
where $[p]$ is the poset of integers from $0$ to $p$.
We let $f_i$ denote the composite functor $[p_i] \rightarrow C$, and we send this flag to the $k$-simplex in $N_k C$
\[ \xymatrix @C=0.9in { f_0(p_0) \ar[r]^-{f_1(f_0(p_0) \rightarrow p_1)} & f_1(p_1) \ar[r]^-{f_2(f_1(p_1) \rightarrow p_2)} & \ldots \ar[r]^-{f_k(f_{k-1}(p_{k-1}) \rightarrow p_k)} & f_k(p_k) } \]
It is straightforward to check that this agrees with faces and degeneracies and so gives a well-defined map of simplicial sets.
\[ N_\cdot \Delta_{N_\cdot C} \ra N_\cdot C \]
Furthermore it is natural with respect to functors in $C$, and so taking $C = [n]$ we get a collection of maps of simplicial sets
\[ N_\cdot \Delta_{\Delta[n]} \ra \Delta[n] \]
which are natural with respect to maps $[n] \rightarrow [m]$ in the simplicial category $\mathbf{\Delta}$.
Since any simplicial set $X_\cdot$ is a colimit of these standard simplices, the definition of the last-vertex map extends to all $X_\cdot$, and it is always a weak equivalence, using Lemma A.5 from \cite{segal1974categories}.

Now we may define the assembly and coassembly maps.
Recall that a functor $F$ is a \emph{homotopy functor} if it sends weak equivalences to weak equivalences.
Our main examples of interest are the covariant functors $A(B)$ and $A(B;R)$ and the contravariant functors $\uda(B)$, and $\uda(B;R)$.

\begin{df}
If $F$ is any covariant homotopy functor from unbased spaces to spectra, the \emph{assembly map} of $F(X)$ is the zig-zag
\[ X_+ \sma F(*) \overset\sim\la |N_\cdot \Delta_{S_\cdot X}|_+ \sma F(*) \cong \underset{(\Delta^p \rightarrow X) \in \Delta_{S_\cdot X}}\hocolim F(*) \overset\sim\la \underset{\Delta^p \rightarrow X}\hocolim F(\Delta^p) \ra F(X) \]
\end{df}

\begin{df}
If $F$ is any contravariant homotopy functor from unbased spaces to spectra, the \emph{coassembly map} of $F(X)$ is the zig-zag
\[ F(X) \ra \underset{(\Delta^p \rightarrow X) \in \Delta_{S_\cdot X}^\op}\holim F(\Delta^p) \overset\sim\la \underset{\Delta^p \rightarrow X}\holim F(*) \cong \Map_*(|N_\cdot \Delta_{S_\cdot X}|_+, F(*)) \overset\sim\la \Map_*(X_+,F(*)) \]
\end{df}

The assembly and coassembly maps are characterized by a universal property. Recall that a homotopy functor $F$ is \emph{linear} if $F(\emptyset) \simeq *$ and $F$ takes homotopy pushout squares to homotopy pushout/pullback squares of spectra. The \emph{homotopy category} of functors is obtained by inverting the natural transformations of functors that induce a weak equivalence $F(X) \overset\sim\rightarrow G(X)$ for every space $X$. These definitions are unchanged if $F$ is a contravariant functor.

\begin{prop}
Assume that $F(\emptyset) \simeq *$. If $F$ is covariant, then the assembly map is the universal approximation of $F$ on the left by a linear functor in the homotopy category of functors. If $F$ is contravariant, the coassembly map is the universal approximation of $F$ on the right by a linear functor.
\end{prop}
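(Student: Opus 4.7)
The plan is to treat the covariant assembly case; the contravariant coassembly case is formally dual, exchanging $\sma$ with $\Map_*(-,-)$, $\hocolim$ with $\holim$, and reversing arrows throughout. I would first verify that $F^\%(X) := X_+ \sma F(*)$ is a linear homotopy functor. Linearity is immediate: $\emptyset_+ \sma F(*) \simeq *$, and smashing with a fixed spectrum preserves homotopy pushouts of pointed spaces. Each step of the zig-zag defining the assembly map is manifestly natural in $X$, so assembly gives a morphism $F^\% \to F$ in the homotopy category of covariant functors. Restricting to $X = *$, the nerve $|N_\cdot \Delta_{S_\cdot *}|$ is contractible by the last-vertex equivalence, so the zig-zag collapses to an equivalence $F(*) \to F(*)$.

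The crux is universality. Given any linear homotopy functor $L$ equipped with a natural transformation $\eta \colon L \to F$, writing $L^\%(X) := X_+ \sma L(*)$, the argument hinges on the following key claim: for any linear homotopy functor $L$, the assembly map $\mathrm{asm}_L \colon L^\% \to L$ is a natural weak equivalence. Granting this, the zig-zag
\[ L \xleftarrow{\mathrm{asm}_L} L^\% \xrightarrow{X_+ \sma \eta(*)} F^\% \xrightarrow{\mathrm{asm}_F} F \]
yields, upon inverting $\mathrm{asm}_L$ in the homotopy category of functors, a factorization $\tilde\eta \colon L \to F^\%$ with $\mathrm{asm}_F \circ \tilde\eta \simeq \eta$; commutativity follows from naturality of assembly with respect to natural transformations of functors. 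Uniqueness is automatic: any two lifts must agree on $*$ (since $\mathrm{asm}_F(*)$ is an equivalence and both lifts compose with it to $\eta(*)$), and a natural transformation between linear functors is determined up to natural equivalence by its value on $*$, via the key claim applied to both source and target.

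The key claim is where the essential technical content sits and is where I anticipate the main obstacle. Combining the last-vertex equivalence $|N_\cdot \Delta_{S_\cdot X}| \simeq X$ with the observation that $L$ sends every $\Delta^p \simeq *$ to a copy of $L(*)$, one obtains
\[ L^\%(X) \;\simeq\; |N_\cdot \Delta_{S_\cdot X}|_+ \sma L(*) \;\cong\; \hocolim_{\Delta^p \to X} L(*) \;\simeq\; \hocolim_{\Delta^p \to X} L(\Delta^p). \]
The natural comparison map $\hocolim L(\Delta^p) \to L(X)$ is then an equivalence because $L$ commutes with the cellular homotopy colimit that builds $X$ out of its simplices: linearity handles the finite pushouts in the skeletal filtration of $|N_\cdot \Delta_{S_\cdot X}|$, and one bootstraps up the filtration using that our target examples $A(-;R)$ and $\uda(-;R)$ are compatible with sequential colimits along the relevant $f$-cofibrations. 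The dual statement, replacing the hocolim by a tower of holims along cofibrations and invoking the same cellular bootstrap, establishes the contravariant case for coassembly.
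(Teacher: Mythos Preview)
The paper does not actually prove this proposition; it simply remarks that the argument ``follows the method of [Calc III], 1.8,'' which is exactly the strategy you outline: show that assembly is a natural weak equivalence on linear functors, then deduce universality by naturality of assembly in the functor variable. So your overall approach is the intended one.

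There is, however, a genuine confusion in your justification of the key claim. You need $\hocolim_{\Delta^p \to X} L(\Delta^p) \to L(X)$ to be an equivalence for an \emph{arbitrary} linear homotopy functor $L$, and at the sequential-colimit step you appeal to the fact that ``our target examples $A(-;R)$ and $\uda(-;R)$ are compatible with sequential colimits.'' But $A(-;R)$ and $\uda(-;R)$ are the functors $F$ being approximated, not the linear test functor $L$ you are factoring through; they are not even linear, and nothing about $L$ has been assumed beyond excision. Linearity alone handles the finite skeleta of $|N_\cdot \Delta_{S_\cdot X}|$, but passing to the colimit over all skeleta requires $L$ itself to commute with sequential homotopy colimits, and that does not follow from preserving homotopy pushout squares. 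In the treatments the paper is citing (Goodwillie, Weiss--Williams) this is handled either by an explicit finitary hypothesis on the functors or by restricting the domain to finite complexes; you should invoke one of these rather than properties of $A$ and $\uda$. The same caveat applies to your one-line dualization for coassembly, where the phrase ``same cellular bootstrap'' hides the analogous question of whether a linear contravariant $L$ sends the sequential colimit of skeleta to a homotopy limit.
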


In fact, the proof of this is quite formal and follows the method of (\cite{calc3}, 1.8). It is also possible to modify the definitions of assembly and coassembly so as to drop the requirement that $F(\emptyset) \simeq *$, or to make higher-order polynomial approximations to $F$; see \cite{malkiewich2015tower}.

It turns out that assembly and coassembly play well with multiplicative structure.
If $F$ is a functor into $R$-modules, then the assembly and coassembly maps are $R$-module maps.
If $F$ lands in ring spectra then the coassembly map is a map of rings, and if $F$ lands in coalgebra spectra then the assembly map is a map of coalgebras.
These facts motivated our proof of Theorem \ref{assembly_coassembly_norm}, but the final form of the proof only requires this simple observation:

\begin{prop}\label{obvious}
If $M$ is a spectrum and $F$ a covariant functor, the assembly map for the functor $M \sma F$ is the smash product of the identity of $M$ and the assembly map for $F$. If $F$ is contravariant then the adjoint of the coassembly map
\[ X_+ \sma (M \sma F(X_+)) \ra M \sma F(*) \]
is the smash of the identity of $M$ and the coassembly map of $F$.
\end{prop}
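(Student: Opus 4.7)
The plan is to observe that $M \sma -$ on spectra is left adjoint to $\Map_*(M, -)$, hence it preserves homotopy colimits and commutes up to canonical isomorphism with the space-tensoring $X_+ \sma -$. Both assertions will be verified by unwinding the zig-zag of natural maps defining (co)assembly and seeing that, after applying $M \sma -$, it is the same zig-zag one would write down for $M \sma F$.

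For the covariant statement, let $G = M \sma F$ and chase the zig-zag defining assembly. Each individual step for $G$ — namely the canonical isomorphism $\hocolim_{\Delta_{S_\cdot X}} G(*) \cong |N_\cdot \Delta_{S_\cdot X}|_+ \sma G(*)$, the last-vertex equivalence $|N_\cdot \Delta_{S_\cdot X}|_+ \simar X_+$, the equivalence $\hocolim G(\Delta^p) \simar \hocolim G(*)$ induced by the contractions $\Delta^p \ra *$, and the final canonical map $\hocolim G(\Delta^p) \ra G(X)$ — is naturally identified with $\id_M \sma (\text{the corresponding step for } F)$, using the canonical isomorphism $M \sma \hocolim F(-) \cong \hocolim(M \sma F(-))$ which holds because $M \sma -$ is a left adjoint and commutes with the bar-construction model for $\hocolim$. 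Concatenating these identifications yields $\text{assembly}_{M \sma F} = \id_M \sma \text{assembly}_F$.

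For the contravariant statement the subtlety is that $M \sma -$ does not commute with $\holim$, so we instead pass to adjoints under $X_+ \sma (-) \dashv \Map_*(X_+, -)$. The adjoint of the coassembly for any contravariant $F$ is a map $X_+ \sma F(X) \ra F(*)$, and unwinding the adjoints of each stage of the zig-zag (and of the last-vertex equivalence) rewrites it as the composite
\[ X_+ \sma F(X) \simar |N_\cdot \Delta_{S_\cdot X}|_+ \sma F(X) \cong \underset{\Delta^p \to X}\hocolim F(X) \ra \underset{\Delta^p \to X}\hocolim F(\Delta^p) \simar \underset{\Delta^p \to X}\hocolim F(*) \ra F(*), \]
where the middle map uses contravariant functoriality of $F$ on $\Delta^p \ra X$ and the last map is the canonical projection from the hocolim of a constant diagram. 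Every operation in this composite — hocolim, smashing with based spaces, and precomposition with a natural transformation of diagrams — commutes with $M \sma -$. Applying $M \sma -$ to the above construction for $F$ therefore produces the corresponding construction for $G = M \sma F$, i.e.\ the adjoint of $\text{coassembly}_G$, which gives the claimed factorization up to the canonical swap $X_+ \sma M \cong M \sma X_+$.

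The only place to be careful is the backward-pointing arrows in the original zig-zags: their adjoints are genuine maps (not formal inverses) arising from the universal property of hocolim of a constant diagram, which is why the adjoint rewrite above goes through without having to invert anything. Beyond that, the argument is formal bookkeeping in the bar-construction model of (co)simplicial hocolim/holim, so there is no real obstacle.
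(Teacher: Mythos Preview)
The paper does not actually prove this proposition; it is stated and immediately used, the label ``obvious'' signaling that the verification is purely formal. Your argument supplies that verification and is correct in substance.

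For the covariant case your reasoning is fine: $M \sma -$ commutes with each constituent of the assembly zig-zag (smashing with a space, the bar-model hocolim, and the maps between them), so assembly for $M \sma F$ is literally $\id_M$ smashed with the zig-zag for $F$.

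For the contravariant case there is one inaccuracy worth flagging. In your displayed ``composite''
\[ X_+ \sma F(X) \simar |N_\cdot \Delta_{S_\cdot X}|_+ \sma F(X) \cong \underset{\Delta^p \to X}\hocolim F(X) \ra \underset{\Delta^p \to X}\hocolim F(\Delta^p) \simar \underset{\Delta^p \to X}\hocolim F(*) \ra F(*) \]
the two arrows marked $\simar$ in fact point the other way: the last-vertex map goes $|N_\cdot\Delta_{S_\cdot X}|_+ \ra X_+$, and for contravariant $F$ the map $\Delta^p \ra *$ induces $\hocolim F(*) \ra \hocolim F(\Delta^p)$, not the reverse. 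So the adjoint is still a zig-zag, and your claim that ``their adjoints are genuine maps (not formal inverses)'' is not correct as stated. This does not damage the conclusion, though: $M \sma -$ commutes with zig-zags just as well as with honest composites. The cleanest way to finish is to compare the coassembly zig-zag for $M \sma F$ directly with $\id_M$ smashed with the coassembly zig-zag for $F$, via the commutative ladder whose rungs are the canonical maps $M \sma \holim(-) \ra \holim(M \sma -)$ and $M \sma \Map_*(X_+,-) \ra \Map_*(X_+, M \sma -)$. Passing to adjoints (smash with $X_+$ and compose with evaluation) then gives the claimed identification, since the evaluation maps are compatible with these comparison maps.
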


\section{A combinatorial lift of assembly and coassembly.}\label{combinatorial}

We have defined the assembly map for $A$-theory as a morphism $[\alpha]$ in the stable homotopy category.
In this section we lift $[\alpha]$ to an explicit map of spectra
\[ BG_+ \sma A(*) \overset\alpha\ra A(BG) \]
Our description will agree with the more classical notion of ``assembly'' by the units of a ring.
We then proceed to do the same thing for coassembly, resulting in an apparently new definition of coassembly that does not appeal to a universal property.

\begin{df}
Let $G$ be a finite group. By abuse of notation, let $G$ refer also to the category with one object, whose set of morphisms (with composition from right to left) is the group $G$. Let $\ti G$ refer to the category whose objects are the elements of the group $G$, and each ordered pair of objects has a unique isomorphism connecting them. We will draw the arrows of these categories from right to left. When taking the nerve, we apply the usual conventions for the face and degeneracy maps, so for example $d_0$ deletes the object on the left:
\[ d_0( \bullet \overset{g_1}\la \bullet \overset{g_2}\la \ldots \overset{g_k}\la \bullet ) = \bullet \overset{g_2}\la \ldots \overset{g_k}\la \bullet \]
Then we define $BG = |N_\cdot G|$ and $EG = |N_\cdot \ti G|$.
\end{df}

Of course, these are isomorphic to the usual definitions of $BG$ and $EG$.
We think of the arrow $g \leftarrow h$ in $\ti G$ as multiplication on the left by $gh^{-1}$, since this description is invariant under the obvious right $G$-action on the category $\ti G$. This labeling of the arrows of $\ti G$ defines a functor $\ti G \rightarrow G$, which on realizations is the familiar map $EG \rightarrow EG/G \cong BG$.

It will be necessary to make some precise statements about monodromy, so let us fix our conventions here.
When $E \ra BG$ is a covering space, and $F$ is the fiber over the 0-simplex of $BG$, we define the monodromy left action of $G$ on $F$ as follows.
Take the 1-simplex $\Delta^1 \in BG$ given by
\[ \bullet \overset{g}\la \bullet \]
and take the map $F \times \Delta^1 \rightarrow E$ which on the right-hand end of $\Delta^1$ is the inclusion of $F$ into $E$.
The left-hand end maps to $F$ back into $F$, but in a nontrivial way, and we declare this to be the action of $g$ on $F$.
The reader can check that this defines a left action, and that canonically $E \cong EG \times_G F$ and $F \cong \Map_{BG}(EG,E)$.

Now we may return to our formula for the assembly map.
\begin{df}
Define the map of spectra
\[ BG_+ \sma K(\mc M_f(*)) \overset\alpha\ra K(\mc M_f(G)) \]
as the map of bisimplicial spaces
\[ (N_p G)_+ \sma w_p \mc S_q \mc M_f(*) \ra w_p \mc S_q \mc M_f(G) \]
\[ g_1,\ldots,g_n; \xymatrix{ X_{0,1} \ar[r] \ar[d]_-\sim^-{w_{1,1}} & X_{0,2} \ar[r] \ar[d]_-\sim^-{w_{1,2}} & \ldots \\
X_{1,1} \ar[r] \ar[d]_-\sim^-{w_{2,1}} & X_{1,2} \ar[r] \ar[d]_-\sim^-{w_{2,2}} & \ldots \\
\vdots & \vdots & }
\mapsto \xymatrix{ X_{0,1} \sma G_+ \ar[r] \ar[d]_-\sim^-{w_{1,1} \sma (- \cdot g_1)} & X_{0,2} \sma G_+ \ar[r] \ar[d]_-\sim^-{w_{1,2} \sma (- \cdot g_1)} & \ldots \\
X_{1,1} \sma G_+ \ar[r] \ar[d]_-\sim^-{w_{2,1} \sma (- \cdot g_2)} & X_{1,2} \sma G_+ \ar[r] \ar[d]_-\sim^-{w_{2,2} \sma (- \cdot g_2)} & \ldots \\
\vdots & \vdots & } \]
and similarly for iterates of the $\mc S_\cdot$-construction. Here the horizontal direction is the $\mc S_\cdot$ direction, viewed as a flag of cofibrations. The quotients are suppressed from the notation, but the map is defined on them by the same rule.
\end{df}

\begin{prop}\label{combinatorial_assembly}
The following diagram commutes up to homotopy, and therefore $\alpha$ defines the assembly map for $A(BG)$:
\[ \xymatrix{
BG_+ \sma A(*) \ar[r]^-\alpha & K(\mc M_f(G)) \ar[r]_-\sim^-{K(I \circ E)} & K(\mc R_f(BG)) \\
|N_\cdot \Delta_{N_\cdot G}|_+ \sma A(*) \ar[u]^-\sim_-{\textup{last vertex}} && \underset{\Delta^p \rightarrow X}\hocolim A(\Delta^p) \ar[ll]_-\sim \ar[u] } \]
\end{prop}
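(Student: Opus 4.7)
The overall strategy is to identify both composites with the same natural bisimplicial construction into $\mc R_f(BG)$, up to a natural transformation that realizes to a homotopy of spectrum maps.

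First, I would unfold the lower-right composite. Pullback along $f \colon \Delta^p \to \ast$ sends a based space $X \in \mc M_f(\ast)$ to the trivial retractive space $\Delta^p \times X \in \mc R_f(\Delta^p)$, inducing an equivalence $A(\ast) \simar A(\Delta^p)$ for each $p$. Under this identification the hocolim becomes $|N_\cdot \Delta_{N_\cdot G}|_+ \sma A(\ast)$, and the composite down to $K(\mc R_f(BG))$ sends each bisimplex $(\sigma \colon \Delta^p \to BG, X)$ to the pushforward $\sigma_!(\Delta^p \times X) \in \mc R_f(BG)$, extended across the $w_\cdot \mc S_\cdot$-grid in the obvious way.

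Next, I would unfold the upper composite. The combinatorial $\alpha$ sends $(g_1, \ldots, g_p; X)$ to the free $G$-space $X \sma G_+$ decorated with $p$ right-multiplication weak self-equivalences by the $g_i$. Applying $K(I \circ E)$, this becomes a $p$-simplex of weak self-equivalences of $E(X \sma G_+) = EG \times_G (X \sma G_+)$, which a direct computation shows is canonically isomorphic to the retractive space over $BG$ obtained from $EG \times X$ by collapsing $EG \times \{\ast_X\}$ along the bundle projection --- equivalently the external smash $X \barsmash EG_+$ over $BG$. The $p$-simplex of weak equivalences encodes the monodromy of this bundle along the edges of $\sigma = (g_1, \ldots, g_p)$.

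To match the two, I would use that $\sigma$ admits a canonical lift $\tilde \sigma \colon \Delta^p \to EG = |N_\cdot \ti G|$, namely the simplex with edge labels $g_i$ based at the identity vertex. This lift induces a map of retractive spaces $\Delta^p \times X \to EG \times X \to E(X \sma G_+)$ over $\sigma$, whose adjoint $\sigma_!(\Delta^p \times X) \to E(X \sma G_+)$ varies naturally in $\sigma$ and glues compatibly through face and degeneracy maps. The right-multiplications by $g_i$ in $\alpha$ are precisely the transition data that identify $\sigma^*(E(X \sma G_+)) \cong \Delta^p \times (X \sma G_+)$ along face inclusions, while the last-vertex map supplies the basepoint vertex of each simplex in the flag. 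This yields a natural transformation of bisimplicial objects in $\mc R_f(BG)$ between the two formulas, whose realization is the desired homotopy of spectrum maps.

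The principal obstacle will be the bisimplicial bookkeeping: matching the last-vertex map with the ``$i$-th row twisted by $g_i$'' pattern in $\alpha$, and verifying compatibility with every face and degeneracy operator in both the $N_\cdot G$ and $\mc S_\cdot$ directions. A universal-property shortcut, using that both composites are natural transformations from a linear functor to $A$ agreeing at $B = \ast$, is not directly available since $\alpha$ is defined only at $B = BG$; extending it to a natural transformation across all unbased spaces would essentially reduce to the combinatorial verification above.
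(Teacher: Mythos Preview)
Your proposal is correct and takes essentially the same approach as the paper: identify the lower composite as $i_!(\Delta^{p_0} \times X)$ and the upper as $(r_!X)\barsmash EG^+$, then compare the two by a lift of each simplex to $EG$. The one detail the paper pins down that you leave to the ``bisimplicial bookkeeping'' is that the lift of $\Delta^{p_j}$ must send the \emph{final} vertex of $[p_j]$ to $1\in\ti G$, chosen separately for each $j$ in the flag $[p_0]\to\cdots\to[p_k]$; with that convention the comparison squares commute on the nose because the right-multiplication by $g_j$ in $\alpha$ is exactly the edge produced by the last-vertex map.
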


\begin{proof}
We define an explicit simplicial homotopy between the two legs of the diagram
\begin{equation}\label{assembly_proof}
\xymatrix{
(N_k G)_+ \sma w_k \mc R_f(*) \ar[r]^-{K(I \circ E) \circ \alpha} & w_k \mc R_f(BG) \\
(N_k \Delta_{N_\cdot G})_+ \sma w_k \mc R_f(*) \ar[u]_-{\textup{last vertex}} & B_k(w_k \mc R_f(\Delta^{-}),\Delta_{N_\cdot G},*) \ar[l] \ar[u] }
\end{equation}
where $B_\bullet$ refers to the categorical bar construction.
The proof is then finished by applying $\mc S_\cdot$ as many times as necessary to define the homotopy on level $n$ of the $K$-theory spectrum.

A $k$-simplex in the lower-right corner of the diagram (\ref{assembly_proof}) is given by a flag of categories
\[ [p_0] \ra [p_1] \ra \ldots \ra [p_k] \ra G \]
and a flag of weak equivalences of retractive spaces over $\Delta^{p_0}$
\[ X_0 \overset{w_1}\ra X_1 \overset{w_2}\ra \ldots \overset{w_k}\ra X_k \]
The long route of the diagram pushes these spaces forward along $r: \Delta^{p_0} \ra *$ to get a flag of based spaces
\[ r_!X_0 \overset{r_!w_1}\ra r_!X_1 \overset{r_!w_1}\ra \ldots \overset{r_!w_1}\ra r_!X_k \]
and selects the $k$-simplex in the nerve of $G$
\[ \bullet \overset{g_1}\la \bullet \overset{g_2}\la \ldots \overset{g_k}\la \bullet \]
where $g_i$ is the image in the category $G$ of the unique arrow in the poset $[p_i]$ connecting the image of the last vertex of $[p_{i-1}]$ to the last vertex of $[p_i]$.
This brings us to the top-left corner of the diagram; the final step transforms this into the single flag of retractive spaces over $BG$
\[ (r_!X_0) \barsmash EG^+ \overset{w_1 \times \cdot g_1}\ra (r_!X_1) \barsmash EG^+ \overset{w_2 \times \cdot g_2}\ra \ldots \overset{w_k \times \cdot g_k}\ra (r_!X_k) \barsmash EG^+ \]
where $EG^+ = EG \amalg BG$ is the bundle $EG \rightarrow BG$ with an extra basepoint section.
On the other hand, the short route of (\ref{assembly_proof}) takes our original data to the flag
\[ i_!X_0 \overset{i_!w_1}\ra i_!X_1 \overset{i_!w_2}\ra \ldots \overset{i_!w_k}\ra i_!X_k \]
where $i: \Delta^{p_0} \rightarrow BG$ is the inclusion induced by the functor $[p_0] \rightarrow G$.
It is enough to define a commuting diagram of weak equivalences of retractive spaces over $BG$
\[ \xymatrix @C=0.5in{
i_!X_0 \ar[r]^-{i_!w_1} \ar[d]^-{f_0} & i_!X_1 \ar[r]^-{i_!w_2} \ar[d]^-{f_1} & \ldots \ar[r]^-{i_!w_k} & i_!X_k \ar[d]^-{f_k} \\
(r_!X_0) \barsmash EG^+ \ar[r]^-{w_1 \times \cdot g_1} & (r_!X_1) \barsmash EG^+ \ar[r]^-{w_2 \times \cdot g_2} & \ldots \ar[r]^-{w_k \times \cdot g_k} & (r_!X_k) \barsmash EG^+
} \]
that agrees with deletion or duplication of elements in each flag, and that is natural with respect to the $X_j$ and $w_j$ (to get compatibility with the $\mc S_\cdot$ construction).
We define $f_j$ as the product of the identity map of $X_j$ and the composite
\[ \xymatrix{ X_j \ar[r] & \Delta^{p_0} \ar[r] & \Delta^{p_j} \ar[r] & EG } \]
Here the first map is the projection map of the retractive space $X_j$, the second map comes from our flag of categories, and the final map is the unique lift of $\Delta^{p_j} \ra BG$ to $EG$ by sending the final vertex of $[p_j]$ to the object of $\ti G$ labeled by $1 \in G$.
This sends the basepoint section of $X_j$ to the basepoint section of $(r_!X_j) \barsmash EG^+$, so $f_j$ is well-defined on the quotient $i_!X_j$.
With this definition, the commutativity of the $j$th square above boils down to the commutativity of the square of categories
\[ \xymatrix{
[p_{j-1}] \ar[r] \ar[d] & [p_j] \ar[d] \\
\ti G \ar[r]^-{\cdot g_j} & \ti G } \]
where the vertical maps are our lifts sending the final vertex to 1.
This square commutes by our definition of $g_j$.
\end{proof}

Now that we have established a combinatorial model of the assembly map, we turn our attention to the coassembly map.
Recall the category $\mc N^f(G)$ from Definition \ref{not_free_df}.
\begin{df}
Define the map of spectra
\[ K(\mc N^f(G)) \overset{c\alpha}\ra \Map_*(BG_+,K(\mc M^f(*))) \]
whose adjoint is given by the map of bisimplicial spaces
\[ (N_p G)_+ \sma w_p \mc S_q \mc N^f(G) \ra w_p \mc S_q \mc M^f(*) \]
\[ g_1,\ldots,g_n; \xymatrix{ X_{0,1} \ar[r] \ar[d]_-\sim^-{w_{1,1}} & X_{0,2} \ar[r] \ar[d]_-\sim^-{w_{1,2}} & \ldots \\
X_{1,1} \ar[r] \ar[d]_-\sim^-{w_{2,1}} & X_{1,2} \ar[r] \ar[d]_-\sim^-{w_{2,2}} & \ldots \\
\vdots & \vdots & }
\mapsto \xymatrix{ X_{0,1} \ar[r] \ar[d]_-\sim^-{g_1^{-1} \cdot w_{1,1}} & X_{0,2} \ar[r] \ar[d]_-\sim^-{g_1^{-1} \cdot w_{1,2}} & \ldots \\
X_{1,1} \ar[r] \ar[d]_-\sim^-{g_2^{-1} \cdot w_{2,1}} & X_{1,2} \ar[r] \ar[d]_-\sim^-{g_2^{-1} \cdot w_{2,2}} & \ldots \\
\vdots & \vdots & } \]
and similarly for iterates of the $\mc S_\cdot$-construction.
\end{df}

\begin{prop}\label{combinatorial_coassembly}
The following diagram commutes up to homotopy, and therefore $c\alpha$ defines the coassembly map for $\uda(BG)$:
\[ \xymatrix{
K(\mc M^f(G)) \ar[r]^-\sim \ar[d]_-\sim^-{K(E)} & K(\mc N^f(G)) \ar[r]^-{c\alpha} & F(BG_+,A(*)) \ar[d]_-\sim^-{\textup{last vertex}} \\
K(\mc E^f(BG)) \ar[r] & \underset{\Delta^p \rightarrow X}\holim \uda(\Delta^p) \ar@{<-}[r]^-\sim & F(|N_\cdot \Delta_{N_\cdot G}|_+,A(*)) } \]
\end{prop}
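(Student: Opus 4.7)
The plan is to dualize the argument given for Proposition \ref{combinatorial_assembly}, writing down an explicit commuting diagram of weak equivalences that witnesses the simplicial homotopy between the two legs. As in the assembly case, it suffices to produce such a homotopy at the level of $w_k$, and then iterate $\mc S_\cdot$ to define it at every spectrum level.

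First I would identify a $k$-simplex in the relevant bisimplicial model for each route. A typical simplex consists of a flag of categories $[p_0] \to [p_1] \to \ldots \to [p_k] \to G$ together with a flag of weak equivalences $X_0 \overset{w_1}\to \ldots \overset{w_k}\to X_k$ in $\mc N^f(G)$. The ``short'' (bottom) route applies $K(E)$ and pulls back along $i \colon \Delta^{p_0} \to BG$ to produce a flag $i^*E(X_0) \to \ldots \to i^*E(X_k)$ of retractive spaces over $\Delta^{p_0}$. The ``long'' (top) route applies $c\alpha$ to produce the flag $X_0 \overset{g_1^{-1}w_1}\to \ldots \overset{g_k^{-1}w_k}\to X_k$ of based spaces (viewed over $\Delta^{p_0}$ by the constant projection), where $g_j \in G$ is the same group element constructed in the proof of Proposition \ref{combinatorial_assembly}.

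Next I would define, for each $j$, a natural weak equivalence of retractive spaces over $\Delta^{p_0}$
\[ f_j \colon \Delta^{p_0}_+ \barsmash X_j \longrightarrow i^* E(X_j) \]
by using the trivialization of $i^*E(X_j) = \Delta^{p_0} \times_{BG} (EG \times_G X_j)$ induced by the specific lift $\tilde i_j \colon \Delta^{p_0} \to EG$ coming from the functor $[p_j] \to \tilde G$ that sends the last vertex of $[p_j]$ to $1 \in G$, restricted along $[p_0] \to [p_j]$. Explicitly, $f_j$ sends $(t, x)$ to the class of $(\tilde i_j(t), x)$. I would then check that the squares
\[ \xymatrix@C=0.7in{
\Delta^{p_0}_+ \barsmash X_{j-1} \ar[r]^-{1 \sma (g_j^{-1} w_j)} \ar[d]^-{f_{j-1}} & \Delta^{p_0}_+ \barsmash X_j \ar[d]^-{f_j} \\
i^*E(X_{j-1}) \ar[r]^-{i^*E(w_j)} & i^*E(X_j)
} \]
commute. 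The verification reduces to the identity $\tilde i_{j-1}(t) = \tilde i_j(t) \cdot g_j$ in $EG$, which expresses that under the standard lift sending $p_j \mapsto 1$ the image of the last vertex of $[p_{j-1}]$ is $g_j^{-1}$, so shifting the lift to send that vertex to $1$ amounts to right multiplication by $g_j$. This is precisely what matches the $g_j^{-1}$ factor introduced by $c\alpha$ on the top. Finally, one checks naturality of the $f_j$ with respect to deletion/duplication in the flag and with respect to the $\mc S_\cdot$-construction, which packages everything into a simplicial homotopy.

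The main obstacle is bookkeeping, not mathematics: one must pin down conventions for left vs.\ right actions, for the orientation of arrows in $\tilde G$ and $G$, and for the direction of variance on the two sides so that the sign in ``$g_j^{-1} w_j$'' matches exactly the shift $\tilde i_{j-1} = \tilde i_j \cdot g_j$. A secondary point is making sure the homotopy is defined compatibly with the $\holim$ structure on the bottom-center, but since the map out of $K(\mc E^f(BG))$ into $\holim_p \uda(\Delta^p)$ and the ``last vertex'' equivalence are both induced by the same last-vertex map on simplex categories, the proof of Proposition \ref{combinatorial_assembly} actually transports almost verbatim once the correct $\tilde i_j$ are chosen.
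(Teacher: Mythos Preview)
Your overall strategy is correct and matches the paper's: dualize the assembly argument by trivializing the pulled-back bundles via lifts to $EG$, and check that the resulting ladder of homeomorphisms intertwines the $c\alpha$ maps with those coming from $E$. However, there is a genuine error in the execution, not just bookkeeping.

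You pull back along $i\colon\Delta^{p_0}\to BG$ and restrict your lifts $\tilde i_j$ to $\Delta^{p_0}$. But in the cobar construction $C_k(*,\Delta_{N_\cdot G},w_k\mc E^f(\Delta^{-}))$ for this contravariant functor, the functor sits on the \emph{terminal} object of the chain, so the comparison must take place over $\Delta^{p_k}$, not $\Delta^{p_0}$; this is exactly dual to assembly, where the bar construction placed the functor at $p_0$. Your identity $\tilde i_{j-1}=\tilde i_j\cdot g_j$ is correct, but it forces the wrong sign: with $f_j(t,x)=[\tilde i_j(t),x]$ one computes
\[
i^*E(w_j)\bigl(f_{j-1}(t,x)\bigr)=[\tilde i_{j-1}(t),w_j(x)]=[\tilde i_j(t)\,g_j,\,w_j(x)]=[\tilde i_j(t),\,g_j\cdot w_j(x)],
\]
which matches $f_j\circ(1\sma g_j w_j)$, \emph{not} $f_j\circ(1\sma g_j^{-1}w_j)$. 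So your squares do not commute as claimed, and the assertion ``this is precisely what matches the $g_j^{-1}$ factor'' is false.

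The fix, which is what the paper does, is to work over $\Delta^{p_k}$ and \emph{extend} the lift $[p_j]\to\ti G$ (sending $p_j\mapsto 1$) forward to all of $\Delta^{p_k}$, rather than restricting it back to $\Delta^{p_0}$. The extended lift sends the final vertex of $[p_k]$ to $g_k^{-1}g_{k-1}^{-1}\cdots g_{j+1}^{-1}$, and the relation becomes $\ell_{j-1}=\ell_j\cdot g_j^{-1}$, producing exactly the $g_j^{-1}$ needed to match $c\alpha$. In short: dualizing assembly reverses both the end of the chain at which the functor is evaluated and the direction in which the lifts are propagated; copying the assembly proof ``almost verbatim'' misses precisely this reversal.
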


\begin{proof}
This proof is in many ways dual to the previous one.
It is enough to define an explicit simplicial homotopy between the two legs of the diagram
\begin{equation}\label{coassembly_proof}
\xymatrix{
w_k \mc M^f(G) \ar[r]^-{c\alpha} \ar[d] & F((N_k G)_+, w_k \mc R^f(*)) \ar[d]^-{\textup{last vertex}} \\
C_k(*,\Delta_{N_\cdot G},w_k \mc E^f(\Delta^{-})) & F((N_k \Delta_{N_\cdot G})_+, w_k \mc R^f(*)) \ar[l] }
\end{equation}
where $C_\bullet$ refers to the categorical cobar construction.
Once this is accomplished, the proof is finished by applying $\mc S_\cdot$ as many times as necessary to define the homotopy on level $n$ of the $K$-theory spectrum.
Though the target spectrum in our holim system is not fibrant, both of our maps factor through this one, so after composing with fibrant replacement they are still homotopic.

A $k$-simplex in the upper-left corner of (\ref{coassembly_proof}) is given by a flag of coarse weak equivalences of spaces with a left $G$-action
\[ Y_0 \overset{w_1}\ra Y_1 \overset{w_2}\ra \ldots \overset{w_k}\ra Y_k \]
Given this and a flag of simplices
\[ [p_0] \ra [p_1] \ra \ldots \ra [p_k] \ra G \]
the long route of (\ref{coassembly_proof}) gives the flag of retractive spaces over $\Delta^{p_k}$
\[ \Delta^{p_k} \times Y_0 \overset{\id \times (g_1^{-1} \cdot w_1)}\ra \Delta^{p_k} \times Y_1 \overset{\id \times (g_2^{-1} \cdot w_2)}\ra \ldots \overset{\id \times (g_k^{-1} \cdot w_k)}\ra \Delta^{p_k} \times Y_k \]
Here $g_i$ has the same definition as before.
Note that the maps $g_i^{-1} \cdot w_i(-)$ and $w_i(g_i^{-1} \cdot -)$ are identical because $w_i$ is equivariant.
The short route of our diagram ends with the flag of retractive spaces
\[ (i^*EG) \times_G Y_0 \overset{\id \times w_1}\ra (i^*EG) \times_G Y_1 \overset{\id \times w_2}\ra \ldots \overset{\id \times w_k}\ra (i^*EG) \times_G Y_k \]
where $i$ is the inclusion $\Delta^{p_k} \rightarrow BG$ coming from our functor $[p_k] \ra G$.
Next we define homeomorphisms $f_j$ of retractive spaces over $\Delta^{p_k}$
\[ \xymatrix @C=0.5in{
\Delta^{p_k} \times Y_0 \ar[r]^-{g_1^{-1} \cdot w_1} \ar[d]^-{f_0} & \Delta^{p_k} \times Y_1 \ar[r]^-{g_2^{-1} \cdot w_2} \ar[d]^-{f_1} & \ldots \ar[r]^-{g_k^{-1} \cdot w_k} & \Delta^{p_k} \times Y_k \ar[d]^-{f_k} \\
(i^*EG) \times_G Y_0 \ar[r]^-{\id \times w_1} & (i^*EG) \times_G Y_1 \ar[r]^-{\id \times w_2} & \ldots \ar[r]^-{\id \times w_k} & (i^*EG) \times_G Y_k
} \]
which agree with deletion or duplication of the spaces $Y_j$.
To define $f_j$, we take the unique lift of $\Delta^{p_j} \rightarrow BG$ to $EG$ which takes the final vertex to 1.
This extends uniquely to a lift of $\Delta^{p_k}$ which takes the final vertex to $g_k^{-1}g_{k-1}^{-1}\ldots g_{j+1}^{-1}$.
It follows that this square commutes:
\[ \xymatrix @C=6em{ 
\Delta^{p_k}_{j-1} \times Y_{j-1} \ar[r]^-{\id \times g_j^{-1}\cdot w_j} \ar[d] & \Delta^{p_k}_j \times Y_j \ar[d] \\
EG \times_G Y_{j-1} \ar[r]^-{\cdot g_j \times g_j^{-1}\cdot w_j} & EG \times_G Y_j } \]
Furthermore, the $g_j$ and $g_j^{-1}$ along on the bottom row cancel out, leaving us with $\id \times w_j$.
So our definition of $f_j$ has the correct properties, and we are done.
\end{proof}

Our combinatorial model for the assembly and coassembly maps fit into a strictly commuting diagram
\[ \xymatrix{
BG_+ \sma K(\mc F) \ar[d] \ar[r]^-\alpha & K(\mc K_f(G)) \ar[d] \ar[r] & K(\mc K^f(G)) \ar[d] \ar[r]^-{c\alpha} & F(BG_+,K(\mc F)) \ar[d] \\
BG_+ \sma K(\mc M_f(*)) \ar[r]^-\alpha & K(\mc M_f(G)) \ar[d]^-{K(E)}_-\sim \ar[rd] \ar[r] & K(\mc N^f(G)) \ar[r]^-{c\alpha} & F(BG_+,K(\mc M^f(*))) \\
& K(\mc E_f(BG)) \ar[rd] & K(\mc M^f(G)) \ar[d]^-{K(E)}_-\sim \ar[u]^-\sim & \\
& K(\mc R_f(BG)) \ar[r]_-{\textup{Cartan}} \ar[u]^-\sim_-{K(P)} & K(\mc E^f(BG)) & } \]
The maps along the top row are the obvious restrictions of $\alpha$ and $c\alpha$ from all finite based spaces to finite based sets.
The remaining unlabeled maps are all induced by inclusions of Waldhausen categories.
Therefore we get a commuting diagram in the homotopy category
\[ \xymatrix{
BG_+ \sma K(\mc F) \ar[d] \ar[r]^-\alpha & K(\mc K_f(G)) \ar[d] \ar[r] & K(\mc K^f(G)) \ar[d] \ar[r]^-{c\alpha} & F(BG_+,K(\mc F)) \ar[d] \\
BG_+ \sma A(*) \ar[r]^-{\textup{assembly}} & A(BG) \ar[r]^-{\textup{Cartan}} & \uda(BG) \ar[r]^-{\textup{coassembly}} & F(BG_+,A(*)) } \]
This gives us a strategy for proving Theorem \ref{assembly_coassembly_norm}.
It is enough to prove that our combinatorial model for assembly gives the norm on $K(\mc F)$, because then we can deduce the general case using the pairings of Proposition \ref{pairings}.

\section{Proof that the lift is the norm.}

In this section we recall and provide some results on equivariant transfer and norm maps.
Then we prove that the composite of the assembly and coassembly maps, on the $K$-theory of finite sets, is the norm.
Finally, we use this to finish the proof of the main theorem.

In this section the term ``$G$-spectrum'' refers to an orthogonal spectrum with a $G$-action, or an $\Sph[G]$-module.
We let $f$ denote any fibrant replacement functor in this model category.
Our constructions could easily be interpreted as taking place in the model category of genuine $G$-spectra from \cite{mandell2002equivariant}, but we do not need that interpretation here.

If $G$ is a finite group, we think of it as a left $(G \times G)$-set with action $(g,h)k = hkg^{-1}$.
We pick any equivariant embedding of $G$ into a $(G \times G)$-representation $V$, and then Pontryagin-Thom collapse to produce a map
\[ S^0 \ra \Omega^V(S^V \sma G_+) \]
Here $S^V$ is the one-point compactification of $V$, and $\Omega^V$ denotes the space of maps out of $S^V$.
We add trivial representations to $V$ to make this a map of spectra:
\[ \Sph \ra \Omega^V \Sigma^V \Sigma^\infty_+ G \overset\sim\la \Sigma^\infty_+ G \]
This zig-zag is the \emph{pretransfer} $[\tau]$. It is well-defined as a map in the homotopy category of $(G \times G)$-spectra.

If $X$ is a cofibrant $\Sph[G]$-module, we smash the pretransfer over $G$ with the identity of $X$.
This gives the \emph{equivariant transfer map} $[\tau(X)]$:
\[ X_{hG} \simeq \Sph \sma_G X \ra (\Omega^V \Sigma^V \Sigma^\infty_+ G) \sma_G X \overset\sim\la \Sigma^\infty_+ G  \sma_G X \cong X \]
This is still a map of $G$-spectra, when we give $X_{hG}$ the trivial $G$-action.
Therefore $[\tau(X)]$ has an equivariant lift into the fibrant replacement $fX$, and this lift must factor through the homotopy fixed points $(fX)^{hG}$.
That factorization is the \emph{equivariant norm map} of $X$:
\[ N(X): X_{hG} \ra (fX)^{hG} \]
We will occasionally drop the fibrant replacement $f$ from the notation, when the meaning is clear.

\begin{rmk}
This definition is translated from \cite{lewis1986equivariant}, III.7.
It only gives a natural transformation between $-_{hG}$ and $-^{hG}$ in the homotopy category, though it could be modified to be natural with respect to any small collection of maps.
In \cite{reich2014adams} this definition is modified to give a natural transformation on the entire category of spectra.
\end{rmk}

We first prove that the norm can simplify if $X$ is a suspension spectrum $\Sigma^\infty_+ E$, and $p: E \rightarrow B$ is a principal $G$-bundle but with $G$ acting on the left.
Recall that $E$ may be embedded equivariantly into $U \times B$ over $B$, when $U$ is a countable-dimensional real inner product space $U$ with an orthogonal $G$-action.
Then the Pontryagin-Thom collapse gives a map in the equivariant homotopy category
\[ \Sigma^\infty B_+ \overset{\theta(p)}\ra f\Sigma^\infty E_+ \]
It is well-known that this agrees with the equivariant transfer $[\tau(\Sigma^\infty_+ E)]$, so we omit the proof.
Now if $h: E \rightarrow K$ is an equivariant map to a space $K$ with a trivial $G$-action, we consider the composite
\[ \Sigma^\infty_+ B \overset{N}\ra (f\Sigma^\infty_+ E)^{hG} \ra (f\Sigma^\infty_+ K)^{hG} = \Map(BG,f\Sigma^\infty_+ K) \]

\begin{prop}\label{nonequivariant_norm}
This is adjoint to a Pontryagin-Thom collapse along the bundle
\[ EG \times_G E \ra BG \times B \]
followed by the map $EG \times_G E \rightarrow K$ coming from $h$.
\end{prop}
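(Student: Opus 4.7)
The plan is to unfold the adjoints and identify the resulting map with an explicit equivariant Pontryagin--Thom collapse.

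First, since $K$ carries the trivial $G$-action, the homotopy fixed points simplify to $(f\Sigma^\infty_+ K)^{hG} \simeq F(BG_+, f\Sigma^\infty_+ K)$, so the composite in question has adjoint $\Sigma^\infty_+(BG \times B) \to f\Sigma^\infty_+ K$. By the $(EG_+ \sma -, F(EG_+,-))$ $G$-adjunction, the norm $N \colon \Sigma^\infty_+ B \to (f\Sigma^\infty_+ E)^{hG}$ has an equivariant adjoint
\[ \ti N \colon \Sigma^\infty_+(EG \times B) \ra f\Sigma^\infty_+ E \]
in $G$-spectra, with $G$ acting on $EG \times B$ through the first coordinate only. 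The adjoint we want is then $h_* \circ \ti N$, passed to $G$-orbits on the source; this makes sense because the target has trivial $G$-action.

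Next I would identify $\ti N$ with an explicit equivariant Pontryagin--Thom map. By construction of the norm, $\ti N$ is characterized up to equivariant homotopy by being a $G$-equivariant lift of the transfer $\theta(p)$ along the projection $EG \times B \to B$. A candidate lift $\ti N'$ is built by pulling the equivariant embedding $E \hookrightarrow U \times B$ over $B$ back to an equivariant embedding $EG \times E \hookrightarrow EG \times U \times B$ over $EG \times B$ (with $G$ acting diagonally), applying the equivariant Pontryagin--Thom collapse, and composing with the $G$-map $EG \times E \to E$:
\[ \ti N' \colon \Sigma^\infty_+(EG \times B) \ra f\Sigma^\infty_+(EG \times E) \ra f\Sigma^\infty_+ E. \]
Restricted to the basepoint of $EG$, naturality of Pontryagin--Thom under base change gives back $\theta(p)$, so $\ti N'$ is a valid equivariant lift. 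Uniqueness of $G$-equivariant lifts (up to homotopy) into the fibrant $G$-spectrum $f\Sigma^\infty_+ E$ then yields $\ti N' \simeq \ti N$.

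Finally I would compose with $h_*$ and pass to orbits. The diagonal $G$-action on $EG \times E$ is free, so $EG \times E \to EG \times_G E$ is a principal $G$-bundle, and the equivariant Pontryagin--Thom of $EG \times E \to EG \times B$ with respect to the embedding above descends, on taking $G$-orbits, to the ordinary Pontryagin--Thom of $EG \times_G E \to BG \times B$ with respect to the induced embedding into the vector bundle $(EG \times_G U) \times B \to BG \times B$. Since $h$ is equivariant with $K$ trivial, $h \circ \pr_E \colon EG \times E \to K$ factors through the orbits as a map $EG \times_G E \to K$. Assembling these, the adjoint of the composite $\Sigma^\infty_+ B \to (f\Sigma^\infty_+ K)^{hG}$ is the Pontryagin--Thom collapse along $EG \times_G E \to BG \times B$ followed by the map $EG \times_G E \to K$ induced by $h$, which is exactly the claim.

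The main obstacle is the identification $\ti N' \simeq \ti N$, which rests on the characterization of the norm in the homotopy category as the essentially unique $G$-equivariant lift of the transfer made possible by the trivial $G$-action on the source. The non-equivariant fact that Pontryagin--Thom is natural under pullback of bundles is classical, but care is required with the fibrant replacement $f$ and with consistently working in the $G$-equivariant stable homotopy category.
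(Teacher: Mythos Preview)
Your overall strategy—adjoint the norm, identify it with an explicit equivariant Pontryagin--Thom map, then pass to $G$-orbits—is sound and genuinely different from the paper's route. The paper instead works entirely at the space level: it fills the square
\[
\xymatrix{
B \ar[r]^-{\theta(p)} \ar@{-->}[d] & \Map^G(EG^\ell,\Omega^U \Sigma^U_\epsilon E_+) \ar[d]^-\sim \\
\Map^G(EG^\ell,\Omega^{\R^\infty} \Sigma^{\R^\infty}_\epsilon E_+) \ar[r]^-\sim & \Map^G(EG^\ell,\Omega^{\R^\infty \oplus U} \Sigma^{\R^\infty \oplus U}_\epsilon E_+)
}
\]
by choosing an equivariant family $\phi\colon EG^\ell \to \{\text{embeddings of }E\text{ into }\R^\infty\times B\}$ and then writes down an explicit homotopy between the two branches. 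The point of $\phi$ is to replace the $G$-representation $U$ by a trivial $\R^\infty$, so that the quotient by $G$ is transparent. Your argument keeps $U$ and tries to quotient directly; this is where the work hides.

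Two concrete issues. First, your Step~3 justification is not sufficient: restricting $\ti N'$ to the basepoint of $EG$ only checks a \emph{nonequivariant} homotopy class, and the forgetful map $[Y,fE]^G \to [Y,fE]$ is not injective in general, so this does not pin down the $G$-equivariant class. The conclusion $\ti N'\simeq\ti N$ is nonetheless correct for a much simpler reason: with the product embedding, the PT collapse of $EG\times E\to EG\times B$ is literally $\id_{EG_+}\sma\theta(p)$, so $\ti N' = (\text{proj}_E)\circ(\id_{EG_+}\sma\theta(p)) = \theta(p)\circ(\text{collapse }EG) = \ti N$ on the nose.

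Second, Step~4 is where the real content lies, and your sentence ``descends, on taking $G$-orbits, to the ordinary Pontryagin--Thom'' needs more. At the space level the $G$-equivariant PT is a map $S^U\sma(EG\times B)_+\to S^U\sma(EG\times E)_+$; on $G$-orbits this becomes a map of Thom spaces over the nontrivial bundle $EG\times_G U\to BG$, and extracting the honest transfer $\Sigma^\infty_+(BG\times B)\to\Sigma^\infty_+(EG\times_G E)$ from this requires untwisting the $S^U$. One clean way to finish is to invoke naturality of the transfer for the pullback square
\[
\xymatrix{
EG\times E \ar[r] \ar[d] & EG\times_G E \ar[d] \\
EG\times B \ar[r] & BG\times B
}
\]
(which you should verify is a pullback), giving $\text{quot}_*\circ\tau' = \tau\circ\text{quot}_*$ in the stable category; then the factorization through orbits is exactly $\tau$. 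The paper's explicit homotopy with $\phi$ is precisely an unstable, hands-on version of this untwisting. Either way, this step deserves an actual argument rather than an assertion.
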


To avoid confusion with left and right actions, we always assume $EG = B(*,G,G)$ has a right $G$-action.
We let $EG^\ell$ denote the same space, but with a left $G$-action given by composing the existing right $G$-action with the inverse map $g \mapsto g^{-1}$. It is then easy to identify the quotient $(EG^\ell \times E)/G$ with the balanced product $EG \times_G E$ as spaces over $BG$.
% Really we need to reverse the order of the face and degeneracy maps as well in the definition of $EG^\ell$, which seems to work but is obnoxious. Composing a simplicial space with the self-equivalence of $\mathbf\Delta$ which reverses the orders of the sets gives a new simplicial space whose realization is canonically homeomorphic to the realization of the original space. Applying this operation then makes the map below simplicial. (This is only really used below to identify the monodromy of the bundle $EG \times_G EG^\ell$ so it's not a big deal.)
% We identify its quotient by $G$ with the space $BG$, using the map of realizations given by the op-simplicial map
% \[ (g_0; g_1, \ldots, g_k) \mapsto (g_k^{-1}, \ldots, g_1^{-1}) \]
% A similar map identifies $(EG^\ell \times E)/G$ with $EG \times_G E$ as spaces over $BG$.
% Note that as a bundle $EG$ has fiber $G$ with left monodromy $g(\gamma) = g\gamma$, while the most obvious way of including $G$ as the fiber of $EG^\ell$ gives left monodromy $g(\gamma) = \gamma g^{-1}$. These two left $G$-sets are isomorphic under the inversion map $\gamma \mapsto \gamma^{-1}$.

\begin{proof}
We use an explicit model for the Pontryagin-Thom collapse following \cite{cohen2004multiplicative}.
Let $S^V_\epsilon = V / (V - B_\epsilon(0))$, for any $G$-representation $V$ and $\epsilon > 0$.
The identity of $V$ induces an equivariant homotopy equivalence $S^V \ra S^V_\epsilon$.
We use $\Sigma^V_\epsilon X$ as shorthand for $S^V_\epsilon \sma X$, and $\Omega^U \Sigma^U_\epsilon$ as shorthand for the colimit of $\Omega^V \Sigma^V_\epsilon$ over all inclusions of representations.
Then we define the Pontryagin-Thom collapse
\[ B_+ \overset{\theta(p)}\ra \Omega^U \Sigma^U_\epsilon E_+ \]
by the formula $(b,u) \mapsto (u - e,e)$, with $e$ the preimage of $b$ closest to $u$.
To give a simple formula for the norm, we fill the dotted line in the square
\[ \xymatrix{
B \ar[r]^-{\theta(p)} \ar@{-->}[d] & \Map^G(EG^\ell,\Omega^U \Sigma^U_\epsilon E_+) \ar[d]^-\sim \\
\Map^G(EG^\ell,\Omega^{\R^\infty} \Sigma^{\R^\infty}_\epsilon E_+) \ar[r]^-\sim & \Map^G(EG^\ell,\Omega^{\R^\infty \oplus U} \Sigma^{\R^\infty \oplus U}_\epsilon E_+) } \]
by choosing an equivariant map $\phi$ of $EG^\ell$ into the space of fiberwise embeddings of $E$ into $\R^\infty \times B$ whose fibers are always at least $\epsilon$ apart.
This latter space has the usual conjugation left $G$-action, and it is weakly contractible, so $\phi$ exists and is unique up to equivariant homotopy.
We rewrite $\phi$ as a map $E \times EG^\ell \ra \R^\infty$ and notice that $\phi(x,y) = \phi(gx,gy)$.
Then we take the dotted map to be the adjoint of the Pontryagin-Thom collapse
\[ B \times EG^\ell \ra \Omega^{\R^\infty} \Sigma^{\R^\infty}_\epsilon E_+ \]
which for each point $y \in EG^\ell$ collapses onto the image of $\phi(-,y)$.
So these two branches give
\[ (s,u,b,y) \in \R^\infty \times U \times B \times EG^\ell \mapsto (s, u - \ti e, \ti e) \in \Omega^{\R^\infty \oplus U} \Sigma^{\R^\infty \oplus U}_\epsilon E_+ \]
\[ (s,u,b,y) \in \R^\infty \times U \times B \times EG^\ell \mapsto (s - \phi(\ti e,y), u, \ti e) \in \Omega^{\R^\infty \oplus U} \Sigma^{\R^\infty \oplus U}_\epsilon E_+ \]
where $\ti e$ is in the first case the point in $E \subset U$ closest to $U$, and in the second case the point of $\phi(-,y)$ closest to $s$.

We can define an explicit, $G$-equivariant homotopy between these two maps by
\[ (s - (\cos t)\phi(\ti e,y), u - (\sin t)\ti e, \ti e) \in \Omega^{\R^\infty \oplus U} \Sigma^{\R^\infty \oplus U}_\epsilon E_+ \]
Here $\ti e$ is always chosen so that the first two quantities are smaller than $\epsilon$, and if this is not possible, we go to the basepoint instead.
This provides a homotopy between the norm and a much simpler map, which when composed with $h$ gives
\[ (s,b,y) \in \R^\infty \times B \times EG^\ell \mapsto (s - \phi(\ti e,y), h(\ti e)) \in \Omega^{\infty} \Sigma^{\infty}_\epsilon K_+ \]
This is invariant under the left $G$-action on $EG^\ell$ and so descends to a map on $BG$.
We reinterpret $\phi$ as a fiberwise embedding of the bundle $EG \times_G E \rightarrow BG \times B$ into $\R^\infty \times BG \times B$, and then the composite is clearly a Pontryagin-Thom collapse followed by $h$.
\end{proof}

Taking the special case $E = EG^\ell$ and $K = *$, the norm for the sphere spectrum
\[ \Sigma^\infty_+ BG \ra \Map_*(BG_+,\Omega^\infty S^\infty_\epsilon) \]
is adjoint to a Pontryagin-Thom collapse along the bundle
\[ EG \times_G EG^\ell \cong (EG \times EG)/G \rightarrow BG \times BG \]
followed by collapsing the total space of that bundle to a point, as in Thm 8 of \cite{lewis1982classifying}.
It is easy to verify that this bundle is equivalent to the diagonal map $BG \rightarrow BG \times BG$.
In a somewhat non-symmetric way, we identify its fiber with $G$ along the isomorphism $G \rightarrow (G \times G)/G$ sending $\gamma$ to $(\gamma,1)$.
Then the left $(G \times G)$-action on the fiber is given by left and right multiplication, $(g,h)\gamma = g\gamma h^{-1}$.

Next we describe how the norm simplifies when $X$ has a trivial $G$-action.
The proof follows the previous proposition, as the $X$ term remains inert at every step.
For simplicity, we suppress the fibrant replacements in the final answer.

\begin{prop}\label{norm_of_trivial_action}
If $X$ has trivial $G$-action, the transfer for $EG^\ell_+ \sma X$ is the smash of the transfer for $\Sigma^\infty_+ EG^\ell$ and the identity of $X$. The norm is adjoint to the smash of the identity of $X$ and the Pontryagin-Thom collapse along $(EG \times EG)/G$, followed by the collapse of $(EG \times EG)/G$ to a point.
\end{prop}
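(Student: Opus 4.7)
The plan is to replay the proof of Proposition \ref{nonequivariant_norm} specialized to $E = EG^\ell$, $B = BG$, $K = *$, and $h$ the constant map, while threading $X$ inertly through every stage. The triviality of the $G$-action on $X$ is what makes this work: $-\sma X$ commutes with $\sma_G$, with the Pontryagin--Thom model $\Omega^V \Sigma^V_\epsilon$, and with fibrant replacement up to equivalence, which is why the statement is allowed to suppress $f$. For the transfer claim, the pretransfer $\Sph \to \Sigma^\infty_+ G$ is built purely from an equivariant embedding of $G$ into a $(G \times G)$-representation and has no $X$-dependence; the equivariant transfer of $Y := EG^\ell_+ \sma X$ is obtained by smashing this pretransfer over $G$ with $\id_Y$, and distributing the trivial $G$-action on $X$ across $\sma_G$ identifies the result with the transfer of $\Sigma^\infty_+ EG^\ell$ smashed with $\id_X$.

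For the norm claim, reuse the explicit model from the previous proof. Choose an equivariant map $\phi \colon EG^\ell \to$ (space of fiberwise embeddings of $EG^\ell$ into $\R^\infty \times BG$ whose fibers are at least $\epsilon$ apart); this $\phi$ depends only on $EG^\ell$ and is unique up to equivariant homotopy. Smash the diagram filling in the dotted line with $\id_X$: the explicit $G$-equivariant homotopy
\[ (s - (\cos t)\phi(\ti e, y),\ u - (\sin t)\ti e,\ \ti e) \]
still joins the two branches after smashing with $\id_X$, because it lives entirely in the $G$-direction and leaves the $X$-coordinate fixed. Specializing to $K = *$, so that $\Sigma^\infty_+ K = \Sph$ and $h$ becomes collapse to a point, the conclusion is that the norm is, in the equivariant homotopy category, adjoint to the smash of $\id_X$ with the Pontryagin--Thom collapse along $(EG \times EG)/G \to BG \times BG$, followed by the smash of $\id_X$ with the collapse of $(EG \times EG)/G$ to a point.

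The main obstacle is essentially bookkeeping: one must confirm that each step (the adjunctions for $Y_{hG}$ and $(fY)^{hG}$, the formation of $\sma_G$, and the fibrant replacement $f$) commutes up to equivalence with the inert factor of $X$. This is immediate because $G$ acts trivially on $X$, so any piece of $G$-equivariant data used in the previous proof remains $G$-equivariant after smashing with $\id_X$, and the colimits and limits defining $\sma_G$ and $(-)^{hG}$ commute with $-\sma X$ up to equivalence.
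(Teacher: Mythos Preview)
Your proposal is correct and is exactly the approach the paper takes: the paper's proof is the single sentence ``The proof follows the previous proposition, as the $X$ term remains inert at every step,'' and you have simply written out that inertness check in detail. If anything, you have given more justification than the paper does.
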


The reader is invited to compare this norm map to the composite of assembly and coassembly on the $K$-theory of finite sets:
\[ (BG \times BG)_+ \sma K(\mc F) \ra K(\mc F) \]
Informally, this map uses the infinite loop space structure on $K(\mc F)$ to add a given point to itself $G$ times.
This sum has a monodromy as we rove around $BG \times BG$ that agrees with the bundle described in Proposition \ref{norm_of_trivial_action}, so we expect this composite map to be a Pontryagin-Thom collapse as well.
We will now make this idea precise.

Let $\mc C$ be a $\Sigma$-free $E_\infty$ operad of unbased spaces, with May's convention of $\mc C(0) = *$.
Let $X$ be a $\mc C$-algebra, $Y$ an ordinary based space, and $f: Y \rightarrow X$ a map of based spaces.
Let $B^\infty X$ be the (essentially unique) spectrum whose zeroth space is $X$, so that $f$ must come from some map in the stable homotopy category $\overline{f}: \Sigma^\infty Y \rightarrow B^\infty X$.
Let $B$ be an unbased space with fundamental group $\Gamma$, and $\Gamma \overset\phi\rightarrow \Sigma_j$ a homomorphism.
Let $E \rightarrow B$ be the $j$-sheeted covering space whose monodromy is given by $\phi$.
(Since all of our monodromy actions are left actions, we assume that the composition of permutations in $\Sigma_n$ is being written from right to left.)
Finally, let $E\Sigma_j \overset{i}\rightarrow \mc C(j)$ be any $\Sigma_j$-equivariant map, necessarily an equivalence.
Consider the composite
\[ \xymatrix{ B \times Y \ar[r] & B\Gamma \times X \cong E\Gamma \times_\Gamma X \ar[r]^-{E\phi \times \Delta} & E\Sigma_j \times_{\Sigma_j} X^j \ar[r]^-{i \times \id} & \mc C(j) \times_{\Sigma_j} X^j \ar[r] & X } \]
If the point in $B \times Y$ is of the form $(b,*)$ then its image in $X$ is the basepoint, so this can be interpreted as a map out of $B_+ \sma Y$.
Intuitively, this composite takes each point $y \in Y$ to a sum of $j$ copies of $f(y) \in X$, but as we rove around $B$, the ordering in this sum is permuted according to the rule given by $\Gamma \rightarrow \Sigma_j$.

\begin{prop}\label{transfer_via_operad}
In the homotopy category, this composite is adjoint to the map of spectra
\[ \xymatrix{ \Sigma^\infty B_+ \sma Y \ar[r]^-{\theta(p) \sma \id} & \Sigma^\infty E_+ \sma Y \ar[r]^-{r \sma \overline{f}} & B^\infty X } \]
where $\theta(p)$ is Pontryagin-Thom collapse, and $r: E \rightarrow *$ is the collapse of $E$ onto a point.
\end{prop}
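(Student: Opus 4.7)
The strategy is to reduce to the universal $j$-fold cover $E\Sigma_j \to B\Sigma_j$ and then identify both maps with a common factorization through the extended power functor $D_j(T) := \mc C(j)_+ \sma_{\Sigma_j} T^{\sma j}$ on spectra.

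First, both sides of the alleged equality are natural in the pair $(p,\phi)$ under pullback of covering spaces and in the pair $(Y,f)$ under post-composition. Any $j$-sheeted cover is pulled back from $E\Sigma_j \to B\Sigma_j$ along the classifying map $B \to B\Gamma \to B\Sigma_j$, and the pair $(Y,f)$ factors as $Y \overset{f}\to X \overset{\id_X}\to X$. So it suffices to treat the universal case $B = B\Sigma_j$, $E = E\Sigma_j$, $\Gamma = \Sigma_j$, $\phi = \id$, $Y = X$, $f = \id_X$.

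In this universal case, the $E_\infty$ structure on $X$ endows $B^\infty X$ with the structure of a $D_j$-algebra in spectra, and both versions of the map factor as
\[ \Sigma^\infty_+ B\Sigma_j \sma \Sigma^\infty X \overset{A}{\ra} D_j(\Sigma^\infty X) \ra D_j(B^\infty X) \ra B^\infty X, \]
where the last two arrows come from the unit $\Sigma^\infty X \to B^\infty X$ and the $D_j$-action on $B^\infty X$; both are the same for the two sides of the equation. For the LHS, $A$ is induced by the diagonal $X \to X^j$ together with the equivariant lift $i \colon E\Sigma_j \to \mc C(j)$; for the RHS, $A$ is the Pontryagin-Thom transfer $\theta(p) \sma \id_{\Sigma^\infty X}$ followed by the tautological map $\Sigma^\infty_+ E\Sigma_j \to D_j\Sph$ smashed with $\Sigma^\infty X$.

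The remaining content is the comparison of these two maps $A$, which after smashing off the factor $\Sigma^\infty X$ is a statement about $\Sigma^\infty_+ B\Sigma_j \to D_j\Sph$ alone. I would model $\theta(p)$ using a $\Sigma_j$-equivariant embedding of $\Sigma_j$ into a representation $V$ following the explicit recipe of Proposition \ref{nonequivariant_norm}; the same embedding, viewed as a configuration, gives a point of the Steiner/little-disks operad, and hence (via the standard comparison of $\Sigma$-free $E_\infty$ operads) a $\Sigma_j$-equivariant map $E\Sigma_j \to \mc C(j)$ that can serve as the lift $i$. A direct diagram chase then identifies both descriptions of $A$ with the same explicit collapse map, which finishes the proof. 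The main obstacle will be the equivariance bookkeeping: the lift $i$ chosen in the statement is not a priori the one coming from the geometric model, but since $\mc C(j)$ is contractible with free $\Sigma_j$-action, any two $\Sigma_j$-equivariant maps $E\Sigma_j \to \mc C(j)$ are $\Sigma_j$-equivariantly homotopic, and the diagram chase is insensitive to this choice.
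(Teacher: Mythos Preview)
Your reduction to the universal cover $E\Sigma_j \to B\Sigma_j$ and to $Y = X$, $f = \id_X$ is sound, but the factorization you propose after that does not exist. The claim that the $E_\infty$ structure on $X$ makes $B^\infty X$ into a $D_j$-algebra---that is, supplies a map $D_j(B^\infty X) \to B^\infty X$---is false. The operad action on the space $X$ encodes \emph{addition}; delooping it recovers the spectrum structure on $B^\infty X$ and nothing more. A structure map $D_j(E) \to E$ is an $H_\infty$ ring structure, which $B^\infty X$ has no reason to carry. The same obstruction kills your ``smash off the factor $\Sigma^\infty X$'' step: the left-hand $A$ is built from the space-level diagonal on $X$, and since spectra have no diagonal this $A$ is simply not of the form $(\text{map}) \sma \id_{\Sigma^\infty X}$. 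So there is no common map $\Sigma^\infty_+ B\Sigma_j \to D_j\Sph$ to which both sides reduce.

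The content of the proposition is exactly the passage you are trying to make formal: that the unstable operad action, after adjointing up to $B^\infty X$, becomes the stable transfer. The paper handles this by working inside May's model $B^\infty X \simeq |B_\bullet(\Sigma^\infty, C, X)|$ for the little-cubes operad $C$. A point of $\mc C(j) \times_{\Sigma_j} X^j$ names a $1$-simplex of this bar construction; one face is the operad action $CX \to X$, the other is the right $C$-functor structure $\Sigma^\infty C \to \Sigma^\infty$, which is built from the monad map $C \to \Omega^\infty\Sigma^\infty$. For little cubes that monad map is already a Pontryagin--Thom collapse, and the $1$-simplices realize to the required homotopy. Some passage through the delooping machine is unavoidable here: it is precisely where the space-level diagonal gets converted into a transfer.
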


This theorem is apparently quite classical (cf. \cite{kahn1972applications} and \cite{adams1978infinite}) so we will give only a brief sketch of the proof.

\begin{proof}
It suffices to prove this for one fixed $E_\infty$ operad, since any two are related by a zig-zag of weak equivalences of operads $\mc C \rightarrow \mc C'$.
We take $\mc C$ to be the little $\infty$-cubes operad.
Our composite map from $B \times Y$ into $X$ factors through $\mc C(j) \times_{\Sigma_j} X^j$.
This gives for each point of $B \times Y$ a 1-simplex in the space $\Omega^\infty B(\Sigma^\infty,C,X)$, arising from some finite level $\Omega^n B(\Sigma^n,C_n,X)$.
That family of 1-simplices defines a homotopy of maps $B \times Y \rightarrow \Omega^\infty B^\infty X$, which at one end is our original composite included along $X \overset\sim\rightarrow \Omega^\infty B^\infty X$, and which at the other end instead maps
\[ \mc C(j) \times_{\Sigma_j} X^j \ra \Omega^\infty \Sigma^\infty X \]
using the map of monads $C \rightarrow \Omega^\infty \Sigma^\infty$.
One may write an explicit homotopy between this latter map and a Pontryagin-Thom collapse along the covering space $E \rightarrow B$ with the monodromy we described.
Therefore the above composite is homotopic to the composite
\[ \xymatrix{
B \times Y \ar[r]^-{1 \times f} & B \times X \ar[r]^-{\theta_p} & \Omega^\infty \Sigma^\infty_\epsilon (E \times X)_+ \ar[r] & \Omega^\infty \Sigma^\infty_\epsilon X \ar@{<-}[r]^\sim & \Omega^\infty \Sigma^\infty X \ar[r] & \Omega^\infty B^\infty X \\ } \]
Checking basepoints, we rearrange this into the composite map stated in the proposition.
\end{proof}

We can now prove Thm \ref{assembly_coassembly_norm} from the introduction.
\begin{thm}
If $G$ is a finite group and $R$ is a ring spectrum then the composite of assembly, Cartan, and coassembly
\[ \xymatrix @C=0.6in{ BG_+ \sma K(R) \ar[r]^-{\textup{assembly}} & A(BG;R) \ar[r]^-{\textup{Cartan}} & \uda(BG;R) \ar[r]^-{\textup{coassembly}} & F(BG_+,K(R)) } \]
is the equivariant norm map
\[ K(R)_{hG} \ra K(R)^{hG} \]
on $K(R)$ with the trivial $G$-action.
\end{thm}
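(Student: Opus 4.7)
The plan is to reduce step by step until the claim becomes the content of Propositions~\ref{transfer_via_operad} and \ref{norm_of_trivial_action}. First, by Proposition~\ref{obvious} the assembly and coassembly maps with coefficients in $R$ are obtained from those with coefficients in $\Sph$ by smashing with the identity of $K(R)$, using the pairings of Proposition~\ref{pairings}; since the equivariant norm on a spectrum with trivial $G$-action is likewise multiplicative, it suffices to prove the theorem when $R=\Sph$. Further, the commuting diagram at the end of \S\ref{combinatorial} shows that the combinatorial models $\alpha$ and $c\alpha$ factor through the inclusion $K(\mc F) \hookrightarrow A(*)$, where $\mc F$ is the category of finite based sets. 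By Barratt--Priddy--Quillen $K(\mc F) \simeq \Sph$, so I am reduced to showing that the composite
\[ (BG)_+ \sma K(\mc F) \xrightarrow{\alpha} K(\mc K_f(G)) \ra K(\mc K^f(G)) \xrightarrow{c\alpha} F(BG_+,K(\mc F)) \]
is adjoint to the equivariant norm for the trivial action of $G$ on $K(\mc F)$.

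Unpacking Propositions~\ref{combinatorial_assembly} and \ref{combinatorial_coassembly} at the bisimplicial level, the adjoint map on $(BG \times BG)_+ \sma Y$, for a finite based set $Y \in \mc F$, takes the simplex $(g_1,\ldots,g_k) \in N_k G$ of the assembly factor and the simplex $(h_1,\ldots,h_k) \in N_k G$ of the coassembly factor, together with a flag of weak equivalences in $\mc F$, to the flag of $G$-fold sums $Y \sma G_+$ in which the $i$th weak equivalence is twisted by $\cdot g_i$ on the right (from assembly) and by $h_i^{-1} \cdot$ on the left (from coassembly). That is, the composite is a $G$-fold sum of the identity of $Y$, indexed by $G$, with $(G \times G)$-monodromy coming from the left and right multiplication actions of $G$ on itself---precisely the action appearing in \S 7.

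Using the alternative delooping of Theorem~\ref{alternate_delooping} to equip $|w_\cdot \mc F|$ with the action of an $E_\infty$ operad, this combinatorial composite is exactly the input to Proposition~\ref{transfer_via_operad}: a $G$-fold sum whose monodromy over $BG \times BG$ is classified by the homomorphism $G \times G \to \Sigma_G$ defining the left--right action on $G$. The associated covering space is $(EG \times EG^\ell)/G \to BG \times BG$, so Proposition~\ref{transfer_via_operad} identifies the composite with the Pontryagin--Thom collapse along this bundle followed by the collapse of its total space to a point, and Proposition~\ref{norm_of_trivial_action} identifies that collapse with the norm $K(\mc F)_{hG} \to K(\mc F)^{hG}$ for the trivial action. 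Smashing back in $K(R)$ via the first paragraph then yields the theorem.

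The main obstacle I anticipate is the bookkeeping needed to verify that the two twists of paragraph two really assemble into the $(G \times G)$-action $(g,h) \cdot k = hkg^{-1}$ used in \S 7, rather than some variant differing by an inversion or a swap. This requires (i) tracking the last-vertex conventions of Propositions~\ref{combinatorial_assembly}--\ref{combinatorial_coassembly} carefully enough to match the adjunction conventions of Proposition~\ref{transfer_via_operad}, and (ii) verifying that the passage from the Waldhausen delooping to the $E_\infty$ delooping of Theorem~\ref{alternate_delooping} intertwines the bisimplicial $G \times G$ action with the operadic sum structure in a way compatible with the twisting. Once these compatibilities are confirmed the proof is a direct application of the two propositions of \S 7.
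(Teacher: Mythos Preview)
Your proposal is correct and follows essentially the same route as the paper: reduce to the $K$-theory of finite sets via the pairings of Proposition~\ref{pairings} and Proposition~\ref{obvious}, read off the composite $\alpha$--Cartan--$c\alpha$ bisimplicially as a $|G|$-fold sum with $(G\times G)$-monodromy, and then invoke Propositions~\ref{transfer_via_operad} and \ref{norm_of_trivial_action} to identify this with the norm. The paper carries out exactly these steps (using the Barratt--Eccles operad explicitly and treating the case $Y=S^0$ first), and the bookkeeping concern you flag about matching the $(G\times G)$-action conventions is real but routine---the paper notes the two descriptions agree up to a natural isomorphism in $w_\cdot\mc F$, hence up to homotopy.
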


\begin{proof}
Take $n = |G|$ and fix a bijection between $G$ and the standard set of $n$ elements.
Define the homomorphism $\phi: \Gamma = G \times G \rightarrow \Sigma_n$ by sending the pair $(g,h)$ to the permutation $x \mapsto gxh^{-1}$.

Let $\mc C$ be the Barratt-Eccles operad, so that $\mc C(n) = E\Sigma_n = |N_\cdot \ti \Sigma_n|$.
If $C$ is any permutative category, the action map
\[ |N_\cdot \ti \Sigma_n| \times |N_\cdot C|^n \ra |N_\cdot C| \]
can be defined the 0-skeleta by
\[ \sigma, c_1, \ldots, c_n \mapsto c_{\sigma^{-1}(1)} \vee c_{\sigma^{-1}(2)} \vee \ldots \vee c_{\sigma^{-1}(n)} =: S_\sigma \]
and on $|N_\cdot \ti \Sigma_n|$ times the $0$-skeleton of $|N_\cdot C|^n$ by sending $(\sigma_1,\ldots,\sigma_k;\sigma)$ to the flag
\[ S_{\sigma_1\sigma_2\ldots\sigma_k\sigma} \overset{\sigma_1^{-1}}\ra S_{\sigma_2\ldots\sigma_k\sigma} \overset{\sigma_2^{-1}}\ra
\ldots \overset{\sigma_{k-1}^{-1}}\ra S_{\sigma_k\sigma} \overset{\sigma_k^{-1}}\ra S_{\sigma} \]
where each map shuffles the summands according to the permutation listed above the arrow.
This extends to the higher skeleta of $|N_\cdot C|^n$, by allowing ourselves to put flags of $k$ composable nontrivial maps into each of the $c_i$ slots.

By Propositions \ref{combinatorial_assembly} and \ref{combinatorial_coassembly}, the composite of assembly and coassembly on the $K$-theory of finite sets is, when restricted to $Y = S^0$, the map of spaces
\[ (BG \times BG)_+ \ra K(\mc F) \simeq \Omega^\infty S^\infty \]
defined simplicially by
\[ \begin{array}{c}
N_k G \times N_k G \ra w_k \mc F \\
(g_1,\ldots,g_k;h_1,\ldots,h_k) \mapsto \xymatrix @C=0.5in{ G_+ \ar[r]^-{g_1^{-1} \cdot - \cdot h_1} & G_+ \ar[r]^-{g_2^{-1} \cdot - \cdot h_2} & \ldots \ar[r]^-{g_k^{-1} \cdot - \cdot h_k} & G_+ }
\end{array} \]
and then we apply the usual inclusion $|w_\cdot \mc F| \rightarrow \Omega|w_\cdot \mc S_\cdot \mc F|$.

Under our choice of homomorphism $\Gamma = G \times G \ra \Sigma_n$, this agrees with the operad action of $E\Sigma_n$ on $|w_\cdot \mc F|$ that we described above.
(Technically, the two agree up to a natural isomorphism in $w_\cdot \mc F$, so the two maps are homotopic, not identical.)
This allows us to rewrite our assembly and coassembly composite as
\[ \xymatrix @C=5em{ B\Gamma \cong E\Gamma/\Gamma \ar[r]^-{E\phi \times (\Delta \circ f)} & E\Sigma_n \times_{\Sigma_n} \Omega |w_\cdot \mc S_\cdot \mc F|^n \ar[r] & \Omega |w_\cdot \mc S_\cdot \mc F| } \]
where $f$ the inclusion of the object $(S^0)$ in $w_0 \mc F \rightarrow w_0 \mc S_1 \mc F$.
Note that $\overline{f}: \Sph \rightarrow B^\infty \Omega|w_\cdot \mc S_\cdot \mc F|$ is an equivalence of spectra, using Thm \ref{alternate_delooping} to identify the latter spectrum with $K(\mc F) \simeq \Sph$.

Now apply Proposition \ref{transfer_via_operad} with $X = \Omega|w_\cdot \mc S_\cdot \mc F| \simeq QS^0$ and $Y = S^0$.
We conclude that this is the transfer and collapse map of $BG \times BG$ along the bundle whose fiber is $G$ and whose $G \times G$-monodromy is given by left and right multiplication.
By Prop \ref{norm_of_trivial_action}, this is the equivariant norm map of $\Sph$.

To move from finite sets to modules over a ring spectrum $R$, we observe that the assembly and coassembly maps for $K(R[G])$ fit into the bottom row of a diagram
\[ \xymatrix @C=0.45in{
BG_+ \sma \Sph \sma K(R) \ar[d] \\
BG_+ \sma A(*) \sma K(R) \ar[r]^-{\textup{assem} \sma \id} \ar[d]^-{\eta} & A(BG) \sma K(R) \ar[r]^-{\textup{Cartan} \sma \id} \ar[d]^-{\eta} & \uda(BG) \sma K(R) \ar[r]^-{\textup{coassem} \sma \id} \ar[d]^-{\eta} & F(BG_+,A(*) \sma K(R)) \ar[d]^-{\eta} \\
BG_+ \sma K(R) \ar[r]^-{\textup{assem}} & A(BG;R) \ar[r]^-{\textup{Cartan}} & \uda(BG;R) \ar[r]^-{\textup{coassem}} & F(BG_+,K(R)) } \]
in which the pairings $\eta$ are described at the end of section \ref{pairing_section}.
By Propositions \ref{pairings} and \ref{obvious}, this diagram commutes up to homotopy.
The composite of the vertical maps on the left-hand side is an equivalence, so the composite of assembly and coassembly can be evaluated by determining the image of $BG_+ \sma K(R)$ along the top part of the diagram.
The desired map has adjoint
\[ (BG \times BG)_+ \sma K(R) \ra K(R) \]
which is the smash product of the identity map of $K(R)$ and the map we just examined above.
Applying Prop \ref{norm_of_trivial_action} again, we conclude that this is the adjoint of the equivariant norm of $K(R)$.
\end{proof}

\begin{rmk}
Our key claim about the $K$-theory of finite sets also follows from a $THH$ result in the author's thesis (\cite{malkiewich2014duality}, section 3.7).
This reduction is possible because the assembly and coassembly maps on $K$-theory commute with the trace maps into $THH$, and the composite map $K(\mc F) \ra THH(\mc M_f(1;\Sph))$ is an equivalence.
We will save the $THH$-level argument for a future paper, since it seems to generalize well but uses somewhat elaborate geometric coherence machinery.
\end{rmk}

\section{A generalization to all subgroups.}\label{all_subgroups}

In this final section, we briefly examine a broader collection of assembly maps, and relate them to the Segal conjecture.

We continue to assume that $G$ is a finite group. If $\Sph$ denotes a fibrant version of the genuinely $G$-equivariant sphere spectrum, then we may consider its tom Dieck splitting and the map from its fixed points to homotopy fixed points:
\[ \bigvee_{(H) \leq G} \Sigma^\infty_+ BWH \simar \Sph^G \ra \Sph^{hG} = F(BG_+, \Sph) \]
Here $(H)$ denotes conjugacy classes of subgroups, and $WH = NH/H$ is the Weyl group of $H$.
The first map above is always an equivalence, by tom Dieck's splitting theorem.
If $G$ is a $p$-group, then the second map is an equivalence after $p$-completion, by the affirmed Segal conjecture \cite{carlsson1984equivariant}.
More generally, it is an equivalence after a certain kind of completion at the augmentation ideal of the Burnside ring.

We recall a nonequivariant description of this composite from \cite{lewis1982classifying}, which one can also verify using the method of Prop \ref{nonequivariant_norm}.
\begin{prop}\label{what_segal_map_is}
The above map $\Sigma^\infty_+ BWH \rightarrow F(BG_+,\Sph)$ is adjoint to a transfer along the bundle
\[ EG \times_{NH} EWH^\ell \ra BG \times BWH \]
followed by the collapse of $EG \times_{NH} EWH^\ell$ to a point.
\end{prop}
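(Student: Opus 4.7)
The plan is to imitate Proposition \ref{nonequivariant_norm}, replacing the principal $G$-bundle $EG^\ell \to BG$ by the more elaborate $G$-equivariant fiber bundle with fiber $G/H$ that arises from the right $WH = NH/H$-action on $G/H$. Both the tom Dieck summand inclusion and the comparison $\Sph^G \to \Sph^{hG}$ admit explicit Pontryagin-Thom descriptions, and composing them yields the asserted collapse.

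First I would recall that the $H$-summand map of the tom Dieck splitting is constructed from the $(G \times WH)$-equivariant pretransfer
\[ \tau_{G/H}\colon \Sph \to \Sigma^\infty_+(G/H) \]
associated to any $(G \times WH)$-equivariant embedding of the finite set $G/H$ into a $(G \times WH)$-representation $V$; here $G$ acts on $G/H$ by left and $WH$ by right translation, and the $WH$-action is free. Smashing $\tau_{G/H}$ with $EWH^\ell_+$, taking $WH$-homotopy orbits, and composing with the $G$-equivariant collapse $G/H \to \ast$ produces the $H$-summand inclusion into $\Sph^G$. The canonical homeomorphism
\[ G/H \times_{WH} EWH^\ell \;\cong\; G \times_{NH} EWH^\ell, \qquad ([gH],\,y) \leftrightarrow [g,\,y], \]
rewrites the underlying space as a $G$-equivariant bundle with fiber $G/H$ whose nonequivariant mixing along the universal $G$-bundle will be the bundle named in the proposition.

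Second, I would run the diagram chase of Proposition \ref{nonequivariant_norm} in this slightly enlarged setting: parametrize the Pontryagin-Thom data over $EG$ by choosing a $G$-equivariant map from $EG$ into the contractible space of $G$-equivariant embeddings of $G/H$ into a $G$-representation, and use it to express $\Sph^G \to \Sph^{hG} = \Map^G(EG,\Sph)$ as a family of fiberwise collapses. The same straight-line homotopy appearing in that proof identifies the composite $\Sigma^\infty_+ BWH \to \Sph^G \to \Sph^{hG}$ with the Pontryagin-Thom collapse along the bundle
\[ EG \times_G \bigl(G \times_{NH} EWH^\ell\bigr) \;\cong\; EG \times_{NH} EWH^\ell \;\longrightarrow\; BG \times BWH, \]
followed by the collapse of $EG \times_{NH} EWH^\ell$ to a point, which records the post-composition with $G/H \to \ast$.

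The main obstacle is careful bookkeeping of the left-versus-right actions. One must check that the right $WH$-action on $G/H$ and the left $WH$-action on $EWH^\ell$ pair up correctly to give the balanced product above; that the outer $G$-action coming from the pretransfer interacts compatibly with the $NH$-quotient, so that after further mixing along $EG$ the group $G$ is fully absorbed; and that the cancellation of $g$ and $g^{-1}$ in the homotopy of Proposition \ref{nonequivariant_norm} still goes through with fiber $G/H$ rather than a single point. Once these identifications are verified, no homotopical input is required beyond the argument already in place in the earlier proposition.
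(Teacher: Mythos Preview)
Your proposal is correct and matches the paper's approach exactly: the paper does not write out a proof but simply cites \cite{lewis1982classifying} and remarks that one can also verify the statement ``using the method of Prop \ref{nonequivariant_norm},'' which is precisely the argument you outline. Your sketch of the pretransfer for $G/H$, the mixing over $EWH^\ell$ and then $EG$, and the bookkeeping of left/right actions is the expected elaboration of that hint.
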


It is not hard to check that this bundle has fiber
\[ G \times_{NH} WH \cong G/H \]
with $G \times WH$-monodromy given by left and right multiplication:
\[ (g,n) \cdot (\gamma H) = g\gamma H n^{-1} = g\gamma n^{-1}H, \qquad g,\gamma \in G, \quad n \in NH \]

Now consider the composite
\[ \xymatrix @C=0.6in{ BWH_+ \sma K(R) \ar[r]^-{\textup{assembly}} & K(R[WH]) \ar[r]^-{G_+ \sma_{NH} -} & G^R(R[G]) \ar[r]^-{\textup{coassembly}} & F(BG_+,K(R)) } \]
The exact functor in the middle takes the $WH$-module $M$ to the $G$-module $G_+ \sma_{NH} M$.
In particular, this sends the module $R \sma WH_+$ to the module $R \sma G/H_+$.

\begin{thm}\label{assembly_coassembly_all_subgroups}
This composite is adjoint to the identity map of $K(R)$ smashed with the map of the Segal conjecture from Prop \ref{what_segal_map_is}.
\end{thm}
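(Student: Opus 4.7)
The plan is to follow the strategy used in the proof of Theorem \ref{assembly_coassembly_norm}, adapting the combinatorial and operadic computations to account for the extra data of the subgroup $H$.

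First, I would reduce to the case $R = \Sph$ using the external pairings of Proposition \ref{pairings} and the naturality encoded in Proposition \ref{obvious}, exactly as at the end of the proof of Theorem \ref{assembly_coassembly_norm}. The functor $G_+ \sma_{NH} -$ is $K(R)$-linear in the appropriate sense, so the whole composite sits in a commuting diagram of pairings that reduces the problem to computing the corresponding composite on the $K$-theory of finite sets,
\[ BWH_+ \sma K(\mc F) \ra K(\mc K_f(WH)) \ra K(\mc K^f(G)) \ra F(BG_+, K(\mc F)), \]
in which the middle arrow is induced by the exact functor $G_+ \sma_{NH} -$ from finite free $WH$-sets to finite $G$-sets.

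Second, I would write down an explicit combinatorial model for this composite, paralleling the formula at the end of the proof of Theorem \ref{assembly_coassembly_norm}. Applying Propositions \ref{combinatorial_assembly} and \ref{combinatorial_coassembly}, together with naturality of the combinatorial assembly under the exact functor $G_+ \sma_{NH} -$, the composite is described bisimplicially by sending $(g_1,\ldots,g_k;\, n_1,\ldots,n_k) \in N_k G \times N_k WH$ to the chain of weak equivalences of $(G/H)_+$ whose $i$-th arrow acts by $\gamma H \mapsto g_i^{-1} \gamma \tilde n_i H$, where $\tilde n_i \in NH$ is any lift of $n_i \in WH$. This formula is well-defined modulo $H$ since $H$ acts trivially on cosets from the right.

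Third, I would identify this simplicial formula with the Barratt--Eccles operad action on $|w_\cdot \mc F|$ coming from the homomorphism
\[ \phi : G \times WH \ra \Sigma_{|G/H|}, \qquad \phi(g,n)(\gamma H) = g\gamma n^{-1} H. \]
The verification is formally the same as in the proof of Theorem \ref{assembly_coassembly_norm}, with $G/H$ replacing $G$ as the underlying finite set and $WH$ replacing the second copy of $G$ on the right. Once that identification is made, Proposition \ref{transfer_via_operad} (applied with $X = \Omega |w_\cdot \mc S_\cdot \mc F| \simeq QS^0$, $Y = S^0$, and $\Gamma = G \times WH$) identifies the composite on $K(\mc F)$ with the Pontryagin--Thom collapse along the $|G/H|$-sheeted covering space over $BG \times BWH$ with monodromy $\phi$, followed by collapse of its total space onto a point. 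This covering space is precisely $EG \times_{NH} EWH^\ell \to BG \times BWH$, so by Proposition \ref{what_segal_map_is} the composite on $K(\mc F)$ is adjoint to the displayed Segal map, and reinserting the smash with $K(R)$ via the pairing diagram gives the statement for general $R$.

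The main obstacle I expect is the combinatorial bookkeeping in the second step: one must unravel how $G_+ \sma_{NH} -$ interacts with the combinatorial assembly and coassembly formulas from Section \ref{combinatorial} so that the left- and right-multiplication conventions of Section 7 carry over intact, and then check that the resulting homomorphism $\phi$ agrees with the $(G \times WH)$-monodromy of $EG \times_{NH} EWH^\ell$ recorded after Proposition \ref{what_segal_map_is} exactly, rather than up to an inversion or an outer automorphism of $WH$.
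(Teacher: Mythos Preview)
Your proposal is correct and follows essentially the same route as the paper's proof: reduce to finite sets, write the composite bisimplicially as the chain of bijections $g_i^{-1}\cdot - \cdot n_i$ on $G/H_+$, identify this with the Barratt--Eccles action via the homomorphism $G\times WH\to\Sigma_{|G/H|}$, and invoke Propositions \ref{transfer_via_operad} and \ref{what_segal_map_is}. The paper's write-up is terser (it absorbs the reduction to $R=\Sph$ into the phrase ``for the same reasons as before''), but the substance is identical.
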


\begin{proof}
As in the proof of Theorem \ref{assembly_coassembly_norm}, it suffices to examine the effect of these maps on the $K$-theory of finite sets. We let $n = |G/H|$ and fix a bijection between $G/H$ and the standard set of $n$ elements.
Define a homomorphism $\Gamma = G \times WH \rightarrow \Sigma_n$ by sending the pair $(g,n)$ to the permutation $x \mapsto gxn^{-1}$.
The composite of the assembly and coassembly maps on finite sets becomes
\[ (BG \times BWH)_+ \ra K(\mc F) \simeq \Omega^\infty S^\infty \]
defined simplicially by
\[ N_k G \times N_k WH \ra w_k \mc F \]
\[ (g_1,\ldots,g_k;n_1 H,\ldots,n_k H) \mapsto \xymatrix @C=0.5in{ G/H_+ \ar[r]^-{g_1^{-1} \cdot - \cdot n_1} & G/H_+ \ar[r]^-{g_2^{-1} \cdot - \cdot n_2} & \ldots \ar[r]^-{g_k^{-1} \cdot - \cdot n_k} & G/H_+ } \]
and then we apply the usual inclusion $|w_\cdot \mc F| \rightarrow \Omega|w_\cdot \mc S_\cdot \mc F|$.
For the same reasons as before, this must be the transfer and collapse map of $BG \times BWH$ along the bundle whose fiber is $G/H$ and whose $G \times WH$-monodromy is given by left and right multiplication.
By Prop \ref{what_segal_map_is}, this is the map of the Segal conjecture.
\end{proof}

We have included this generalization, because it allows us to partially compute the Swan theory of $\Sph[G]$, when $G$ is a finite $p$-group.
This was stated in the introduction under Theorem \ref{intro_burnside_split}.

\begin{cor}
If $R$ is a ring spectrum augmented over the sphere, and $G$ is a finite $p$-group, then the group $\pi_0(G^R(R[G])^\wedge_p)$ contains the Burnside ring $A(G)^\wedge_p$ as a direct summand.
\end{cor}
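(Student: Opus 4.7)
The plan is to use Theorem \ref{assembly_coassembly_all_subgroups} simultaneously for all conjugacy classes of subgroups and to recognize the resulting composite into $\Sph^{hG}$ as the Segal conjecture map. First I would reduce to the case $R = \Sph$. Since $R$ is augmented over $\Sph$, the ring maps $\Sph \to R \to \Sph$ induce exact Waldhausen functors between $\mc M^f(G;\Sph)$ and $\mc M^f(G;R)$ (via extension and restriction of scalars) whose composite on $\mc M^f(G;\Sph)$ is naturally weakly equivalent to the identity, making $\uda(BG) \simeq G^{\Sph}(\Sph[G])$ a retract of $G^R(R[G])$. Thus any direct summand of $\pi_0(\uda(BG))^\wedge_p$ is automatically a direct summand of $\pi_0(G^R(R[G]))^\wedge_p$.

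For each conjugacy class $(H) \leq G$, Theorem \ref{assembly_coassembly_all_subgroups} supplies a map $\phi_H: \Sigma^\infty_+ BWH \sma A(*) \to \uda(BG)$, given by assembly followed by the induction functor $G_+ \sma_{NH} -$, whose composite with coassembly is adjoint to the identity of $A(*)$ smashed with the $(H)$-summand of the Segal conjecture map. Pre-composing $\phi_H$ with the unit $\Sph \to A(*)$, wedging over conjugacy classes, and invoking the tom Dieck splitting $\Sph^G \simeq \bigvee_{(H)} \Sigma^\infty_+ BWH$, I obtain a map $\Phi: \Sph^G \to \uda(BG)$. Post-composing with coassembly and with Waldhausen's retraction $A(*) \to \Sph$ of the unit, the total composite
\[ \Sph^G \overset{\Phi}\ra \uda(BG) \overset{c\alpha}\ra F(BG_+, A(*)) \ra F(BG_+, \Sph) = \Sph^{hG} \]
agrees with the Segal conjecture map of Proposition \ref{what_segal_map_is}, since the composite $\Sph \to A(*) \to \Sph$ of unit and retraction is the identity.

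By Carlsson's theorem, since $G$ is a $p$-group this Segal map becomes an equivalence after $p$-completion. Applying $\pi_0$ and $p$-completion, the composite
\[ A(G)^\wedge_p \cong \pi_0(\Sph^G)^\wedge_p \ra \pi_0(\uda(BG))^\wedge_p \ra \pi_0(\Sph^{hG})^\wedge_p \cong A(G)^\wedge_p \]
is an isomorphism, so the first arrow is split injective. This exhibits $A(G)^\wedge_p$ as a direct summand of $\pi_0(\uda(BG))^\wedge_p$, and by the initial reduction, also of $\pi_0(G^R(R[G]))^\wedge_p$.

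The main obstacle I anticipate is the bookkeeping required to verify that wedging the $\phi_H$ over conjugacy classes, pre-composing with the tom Dieck identification, and post-composing with coassembly and the unit retraction truly reproduces the Segal map on the nose (and not merely up to some automorphism of $A(G)^\wedge_p$). Fortunately, both Proposition \ref{what_segal_map_is} and the proof of Theorem \ref{assembly_coassembly_all_subgroups} describe the relevant bundle in the same terms, namely $G \times WH$-monodromy on the fiber $G/H$ by left and right multiplication, so the comparison should be tautological once the various retractions and splittings are lined up compatibly.
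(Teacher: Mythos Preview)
Your argument is correct and follows the same strategy as the paper: wedge the maps of Theorem \ref{assembly_coassembly_all_subgroups} over all conjugacy classes $(H)$, identify the resulting composite into $F(BG_+,\Sph)$ with the Segal conjecture map via Proposition \ref{what_segal_map_is}, and apply Carlsson's theorem for $p$-groups.

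The one organizational difference is in how the augmentation of $R$ is used. You reduce to $R=\Sph$ at the outset by making $G^{\Sph}(\Sph[G]) = \uda(BG)$ a retract of $G^R(R[G])$ via base change along the ring maps $\Sph\to R\to\Sph$; the paper instead keeps $R$ in play throughout and only at the end produces the needed retraction $\Sph\to K(R)\to\Sph$, using the Dennis trace $K(R)\to THH(R)\to THH(\Sph)\simeq\Sph$ for the second map. Both routes work and yield the same conclusion. One terminological correction to your reduction step: the retraction on Waldhausen categories should be built from \emph{extension} of scalars along both $\Sph\to R$ and $R\to\Sph$; restriction along either map goes in the wrong direction or fails to preserve the perfectness condition defining $\mc M^f$. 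You should also note that the second extension $\Sph\sma_R -$ must be taken in a derived sense (or after cofibrantly replacing $\Sph$ as an $R$-module) to ensure it is exact.
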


\begin{proof}
Since $R$ is augmented, so is $K(R)$.
We choose $\eta: \Sph \rightarrow K(R)$ by picking out $R$ as a module over itself.
We define $\epsilon: K(R) \rightarrow \Sph$ by composing the topological Dennis trace $K(R) \rightarrow THH(R)$ with the augmentation $THH(R) \rightarrow THH(\Sph) \simeq \Sph$.
Though the Dennis trace is difficult to describe on most elements, it is easy to check that the sphere in $K(R)$ picked out by $\eta$ goes to the unit map $\Sph \ra R$ at level 0 of the cyclic bar construction for $THH(R)$.
So $\epsilon \circ \eta$ can be identified up to homotopy with the map induced on $THH$ by the map of rings $\Sph \rightarrow R \rightarrow \Sph$, but this is just the identity map of $\Sph$.

Now take the maps of Thm \ref{assembly_coassembly_all_subgroups}, for varying $H$, and compose them with the augmentation of $K(R)$:
\[ \xymatrix{
\bigvee_{(H) \leq G} \Sigma^\infty_+ BWH \ar[d]^-{\id \sma \eta} & \\
\bigvee_{(H) \leq G} \Sigma^\infty_+ BWH \sma K(R) \ar[r]^-\alpha & \bigvee_{(H) \leq G} K(R[WH]) \ar[r]^-{G_+ \sma_{NH}} & G^R(R[G]) \ar[r]^-{c\alpha} & F(BG_+, K(R)) \ar[d]^-{F(\id,\epsilon)} \\
&&& F(BG_+, \Sph) } \]
This is adjoint to the maps of the Segal conjecture, smashed with the composite $\epsilon \circ \eta \simeq \id$.
So the above composite agrees with the map of the Segal conjecture.

Now let $G$ be a finite $p$-group, take the $p$-completion of every spectrum above, and take $\pi_0$ to get a diagram of abelian groups.
Then the first and last terms are isomorphic to the $p$-completed Burnside ring $A(G)^\wedge_p$, and the map between them is an isomorphism.
It follows that every group along the middle row has $A(G)^\wedge_p$ as a direct summand.
\end{proof}

\bibliographystyle{amsalpha}
\bibliography{coassembly}{}

Department of Mathematics \\
University of Illinois at Urbana-Champaign \\
1409 W Green St \\
Urbana, IL 61801 \\
\texttt{cmalkiew@illinois.edu}

\end{document}